\newcommand\itemb{\item[\textbf{(10.4.2):}]}
\newcommand\itemc{\item[\textbf{(10.4.3):}]}
\newcommand\itemd{\item[\textbf{(10.4.4):}]}
\newcommand\iteme{\item[\textbf{(10.4.5):}]}
\tikzset{
	MyPersp/.style={scale=2,x={(0.8cm,0cm)},y={(0cm,0.25cm)},
    z={(0cm,1cm)}},
	MyPoints/.style={fill=white,draw=black,thick}
		}
 \definecolor{darkgreen}{HTML}{336633}
 \definecolor{darkred}{HTML}{993333}
\definecolor{myred}{rgb}{0.75,0,0}
\definecolor{mygreen}{rgb}{0,0.5,0}
\definecolor{myblue}{rgb}{0,0,0.65}
\renewcommand{\L}{\mathbb{L}}
\newcommand{\K}{\mathbb{K}}
\renewcommand{\O}{\mathbb{O}}
\newcommand{\bk}{\Bbbk}
\newcommand{\Rl}{R_{\mathbb{K}}}
\DeclareMathOperator{\Spec}{Spec}
\newcommand{\pt}{\mathrm{pt}}
\newcommand{\Tilt}{\mathsf{Tilt}}
\newcommand{\Parity}{\mathsf{Parity}}
\newcommand{\Gr}{{\EuScript Gr}}
\newcommand{\Fl}{{\EuScript Fl}}
\newcommand{\Perv}{\mathsf{Perv}}
\newcommand{\IC}{\mathsf{IC}}
\newcommand{\End}{\mathrm{End}}
\def\lotimes{\@ifnextchar_{\@lotimessub}{\@lotimesnosub}}
\def\@lotimessub_#1{\mathchoice{\mathbin{\mathop{\otimes}^L}_{#1}}%
  {\otimes^L_{#1}}{\otimes^L_{#1}}{\otimes^L_{#1}}}
\def\@lotimesnosub{\mathbin{\mathop{\otimes}^L}}
\newcommand{\For}{\mathsf{For}}
\newcommand{\id}{\mathrm{id}}
\DeclareMathOperator{\Hom}{Hom}
\DeclareMathOperator{\Res}{Res}
\DeclareMathOperator{\supp}{supp}
\newcommand{\pr}{\mathsf{pr}}
\newcommand{\excise}[1]{}
\newcommand{\Av}{\mathsf{Av}}
\theoremstyle{definition}
\newtheorem*{conj*}{Conjecture}
\newtheorem*{thm*}{Theorem}
\newtheorem*{cor*}{Corollary}
\numberwithin{equation}{section}
\newtheorem{thm}{Theorem}[section]
\newtheorem{lem}[thm]{Lemma}
\newtheorem{prop}[thm]{Proposition}
\newtheorem{cor}[thm]{Corollary}
\theoremstyle{definition}
\newtheorem{defn}[thm]{Definition}
\theoremstyle{remark}
\newtheorem{rmk}[thm]{Remark}
\newtheorem*{rmk*}{Remark}
\DeclareMathOperator{\Rep}{Rep}
\newcommand{\BSvar}{\mathsf{BS}}
\newcommand{\IW}{\text{IW}}
\newcommand{\Gm}{\mathbb{G}_{\mathrm{m}}}
\newcommand{\Smith}{\mathsf{Sm}}
\newcommand{\parity}{\mathrm{par}}
\newcommand{\Satake}{\mathsf{Sat}}
\author{Joshua Ciappara}
\date{June 2021}
\title{Hecke category actions via Smith--Treumann theory}
\begin{document}
\begin{abstract}
    Let $\textbf{G}$ be a simply connected semisimple algebraic group over a field of characteristic greater than the Coxeter number. We construct a monoidal action of the diagrammatic Hecke category on the principal block $\Rep_0(\textbf{G})$ of $\Rep(\textbf{G})$ by wall-crossing functors. This action was conjectured to exist by Riche--Williamson \cite{rw}. Our method uses constructible sheaves and relies on Smith--Treumann theory as applied in \cite{st}.
\end{abstract}
\maketitle

\section{Introduction}
A motivating problem in the study of reductive algebraic groups over a field $\bk$ of positive characteristic $\ell > 0$ is the determination of characters for important classes of modules. First conjectured by G. Lusztig, there exists a character formula for irreducible representations in terms of Kazhdan--Lusztig polynomials. This formula is known to be true for almost all $\ell$, but was shown in \cite{wil16c} not to hold under the original hypothesis $\ell > h$.

In response to this and other questions, Riche--Williamson \cite{rw} conjectured new formulas for simple and indecomposable tilting modules, applying to any $\ell > h$ and (after variation) perhaps all $\ell$. These formulas replace Kazhdan--Lusztig polynomials with \textit{$p$-Kazhdan--Lusztig polynomials}, which are suggested to be better suited to modular representation theory. The new conjectures are derived in \cite{rw} as a consequence of a more categorical proposition: the existence of an action of the diagrammatic Hecke category $\mathscr{H}$ (defined in \cite{ew}) on the principal block $\text{Rep}_0(\textbf{G})$ by wall-crossing functors, categorifying the action of the affine Weyl group on its antispherical module. Using methods from the theory of 2-Kac--Moody actions, Riche--Williamson proved their categorical conjecture for $\text{GL}_n$, but the general statement has remained open until recently. After the tilting and irreducible character formulas were established by other means \cite{amrw19} for $\ell > 2h -2$, the following year saw two major developments:
\begin{enumerate}
    \item The application of Smith--Treumann theory (as developed by Treumann \cite{tr} and Leslie--Lonergen \cite{ll}) by Riche--Williamson \cite{st}, providing a novel geometric proof of the linkage principle and establishing the aforementioned character formulas in all characteristics.
    \item The resolution of the categorical conjecture of Riche--Williamson in full generality by Bezrukavnikov--Riche \cite{bezri}. Their approach is essentially coherent, and makes use of localization theorems in characteristic $p$, as well as a new bimodule-theoretic realisation of $\mathscr{H}$ found by Abe \cite{abe1, abe2}.
\end{enumerate}
Our objective is to provide an alternative proof of Riche--Williamson's conjecture, using the machinery of constructible sheaves and Smith--Treumann theory. Assume now that $G$ is a semisimple algebraic group of adjoint type over an algebraically closed field $\mathbb{F}$ of characteristic $p \ne \ell$, with 
$$\textbf{G} = \Spec(\bk) \times_{\Spec(\mathbb{Z})} G_{\mathbb{Z}}^\vee,$$
where $G_{\mathbb{Z}}^\vee$ is the unique split reductive group scheme over $\mathbb{Z}$ whose base change to $\mathbb{C}$ has root datum dual to that of $G$. 

The starting point of our approach is a realisation of the Hecke category via parity sheaves on the affine flag variety $\Fl$ of $G$, first proved in \cite{rw} but modified to incorporate loop rotation $\mathbb{G}_m$-equivariance. This realisation induces a graded right $\mathscr{H}$-module equivalence between the antispherical quotient of $\mathscr{H}$ and $\Parity_{\IW}(\Fl,\bk)$, the category of Iwahori--Whittaker parity complexes on $\Fl$. Through an understanding of the morphism spaces between parity objects, as provided by \cite[\textsection 7]{st}, we show that the graded action of $\mathscr{H}$ descends further to a \textit{Smith quotient} $\Smith_{\IW}^\parity(\Fl,\bk)$ of $\Parity_{\IW}(\Fl,\bk)$. Up to graded shift, the indecomposable object $B_s \in \mathscr{H}$ acts on $\Smith_{\IW}^\parity(\Fl,\bk)$ by the functor induced by the composite $(q^s)^* (q^s)_*$, where $s$ is an affine simple reflection and
$$q^s: \Fl \to \Fl^s$$
is the natural morphism between certain partial affine flag varieties. 
We now use three ingredients to transfer the Hecke category action to $\Rep_0(G)$. Let $\Gr$ denote the affine Grassmannian of $G$, and let $\Gr^\varpi$ denote the fixed points of $\Gr$ under loop rotation by the $\ell$-th roots of unity $\varpi \le \mathbb{G}_m$. The first ingredient is the miraculous decomposition
\begin{equation} \label{decomp}
\Gr^\varpi = \bigsqcup_\nu \Gr_{(\nu)},
\end{equation}
where the components $\Gr_{(\nu)} = \Fl_\ell^\nu$ are (thin) partial affine flag varieties. The second ingredient is the main theorem of \cite{st}, which shows that Smith restriction from Iwahori--Whittaker perverse sheaves on $\Gr$ to the Smith category of $(\Gr)^\varpi$ is fully faithful on the tilting subcategory, with a well-understood essential image:
\begin{equation} \label{secing}
\begin{tikzcd}
\Perv_{\IW}(\Gr,\bk) \arrow{r}{} & \Smith_{\IW}(\Gr^\varpi,\bk) \\
\Tilt_{\IW}(\Gr,\bk) \arrow{r}{\cong} \arrow[hookrightarrow]{u} \arrow{r}{\cong} & \Smith_{\IW}^\natural(\Gr^\varpi,\bk). \arrow[hookrightarrow]{u}
\end{tikzcd}
\end{equation}
The categories in the lower row admit a decomposition into ``blocks'', preserved by the equivalence and ultimately tied to the linkage principle for $\textbf{G}$; our choices determine \textit{principal blocks} $\Tilt_{\IW}^0$ and $\Smith_\IW^0$ on the left and right sides, respectively. Our final ingredient is a version of the geometric Satake equivalence due to \cite{bgmrr}:
\begin{equation} \label{thirding}
    \Rep(\textbf{G}) \xrightarrow{\cong} \Perv_{\IW}(\Gr,\bk).
\end{equation}
Through the identification of $\Fl$ with a component of $\Gr^\varpi$ in \eqref{decomp}, $\Smith_\IW^0$ inherits a right $\mathscr{H}$-module action from $\Smith_\IW^\parity(\Fl,\bk)$. Using \eqref{secing} and \eqref{thirding}, we are able to transfer the action first to $\Tilt_{\IW}^0$ and hence to $\Tilt(\Rep_0(\textbf{G}))$, the tilting subcategory of the principal block of $\textbf{G}$. As explained in \cite[Rmk 5.2(1)]{rw}, we may now deduce the existence of a right $\mathscr{H}$-action on $\Rep_0(\textbf{G})$, and it remains only to verify that the push--pull action of $B_s$ maps across to the wall-crossing functor $\theta_s$.

We in fact show something more precise: functors of pushing and pulling in Smith theory correspond to translation functors onto and off walls in representation theory. In the case of pushforward, suppose $\gamma$ is the dominant coweight labelling an indecomposable tilting module $\mathsf{T}(\gamma)$ which affords the translation functor $T^s$ for $\textbf{G}$. On the tilting categories, the geometric Satake equivalence sends $T^s$ to the composite of convolution with a tilting sheaf $\mathscr{T}(\gamma)$ and a projection. Our first observation is that Smith localisation erases the difference between convolution with $\mathscr{T}(\gamma)$ and the functor $(\phi_2)_* (\phi_1)^*$, where the $\phi_i$ are the projections associated with a certain correspondence $\mathscr{Y}^\gamma \subseteq \Gr \times \Gr$. Our second observation is that since ``Smith localisation commutes with everything'', the following commutative diagram of geometric morphisms corresponds to a diagram of functors commuting up to natural isomorphism in Smith theory:
\begin{center}
\begin{tikzcd} 
 \Gr & \arrow[swap]{l}{\phi_1} \mathscr{Y}^\gamma \arrow{r}{\phi_2} & \Gr \\
 \Gr^\varpi \arrow[hookrightarrow]{u} & \arrow[swap]{l}{\phi_1^\varpi} (\mathscr{Y}^\gamma)^\varpi \arrow[hookrightarrow]{u} \arrow{r}{\phi_2^\varpi} & \Gr^\varpi \arrow[hookrightarrow]{u} \\
 \Fl_\ell \arrow[bend right=30,swap]{rr}{q^s} \arrow[hookrightarrow]{u} & \arrow[swap]{l}{z} \mathscr{Z} \arrow{u}{\theta} \arrow{r}{z^s} & \Fl_\ell^s \arrow[hookrightarrow]{u}.
 \end{tikzcd}
\end{center}
Here $\mathscr{Z}$ is merely the graph of $q^s$, so $(q^s)_* \cong z_*^s z^*$ and we are done; the case of pullback can be approached similarly or by citing properties of adjoint functors.

The structure of the paper is as follows. In Section \ref{sec:alg} we fix notation and cover necessary algebraic preliminaries. In Section \ref{sec:ing}, we provide context on the geometry of affine Grassmannians and flag varieties, (equivariant) derived categories, versions of the geometric Satake correspondence, and parity sheaves, among other topics. The purpose of Section \ref{sec:asm} is to recapitulate some of the main results of the third part of \cite{rw} with additional $\mathbb{G}_m$-equivariance. Section \ref{sec:asq} recalls the foundations of Smith--Treumann theory before constructing the aforementioned push--pull action of $\mathscr{H}$. Everything is tied together in Section \ref{sec:bri}, where the action is transported over to $\Rep_0(G)$ and, to conclude, the main results are stated (Theorems \ref{main} and \ref{final}).

\subsection*{Acknowledgements}
I am sincerely grateful to my supervisors, G. Williamson and O. Yacobi, for many helpful conversations, suggestions, and encouragements. This work was completed during PhD studies at the University of Sydney under the benefit of an RTP stipend. 

\section{Algebraic preliminaries} \label{sec:alg}
\subsection{Notation}
Let $\mathbb{F}$ and $\bk$ denote fields of unequal, positive characteristics $p$ and $\ell$, respectively, where $\mathbb{F}$ is algebraically closed and $\bk$ is finite. Write $\mathbb{O} = W(\bk)$ for the ring of Witt vectors over $\bk$, or in other words the unique unramified extension of $\mathbb{Z}_\ell$ of degree $[\bk:\mathbb{F}_\ell]$, and set $\mathbb{K}$ to be the quotient field of $\mathbb{O}$. We obtain an $\ell$-modular system:
\begin{equation} \label{lmod}
\bk \twoheadleftarrow \mathbb{O} \hookrightarrow \mathbb{K}.
\end{equation}
Throughout, all schemes will have ground field $\mathbb{F}$. We define a \textit{coefficient ring} to be one of those displayed in \eqref{lmod}, or a finite extension of $\bk$ or $\mathbb{K}$; these will provide the coefficients for {\'e}tale sheaves. All functors on derived categories will be assumed to be derived. For $n \ge 1$, let $\varpi_n \subseteq \mathbb{G}_m$ denote the subgroup of $n$-th roots of unity; put $\varpi = \varpi_\ell$. 

\subsection{Generalities on categories}
We collect some important notions from category theory, beginning with fundamentals on adjunctions. 

\begin{defn}
Let $\mathcal{C}, \mathcal{D}$ be categories and consider functors $F: \mathcal{D} \to \mathcal{C}$ and $G: \mathcal{C} \to \mathcal{D}$. 
\begin{enumerate}
    \item $F$ is \textit{left adjoint} to $G$, written $F \dashv G$, in case there is a natural isomorphism 
    $$\Phi: \text{Hom}_{\mathcal{C}}(F(-),-) \cong \text{Hom}_{\mathcal{C}}(-,G(-)): \mathcal{D} \times \mathcal{C}^\text{op} \to \text{Set}.$$ 
    Say $(F,G)$ is an \textit{adjoint pair} with \textit{adjunction} $\Phi$.
    \item Natural transformations $\varepsilon: FG \to 1_{\mathcal{C}}$, $\eta: 1_{\mathcal{D}} \to GF$ form a \textit{counit--unit pair} $(\varepsilon,\eta)$ for $(F,G)$ in case the \textit{counit} and \textit{unit} equations hold:
    $$\varepsilon F \circ F \eta = 1_F, \quad G \varepsilon \circ \eta G = 1_G.$$
\end{enumerate}
\end{defn}

\begin{lem} \label{adjeq}
Let $F: \mathcal{D} \to \mathcal{C}$ and $G: \mathcal{C} \to \mathcal{D}$. Then $(F,G)$ admits a counit--unit pair if and only if it is an adjoint pair.
\end{lem}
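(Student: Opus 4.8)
The plan is to prove both implications by explicit construction, recalling that this is the standard equivalence between the two formulations of adjunction. For the harder direction --- that a counit--unit pair $(\varepsilon, \eta)$ yields an adjoint pair $(F,G)$ --- I would define, for objects $D \in \mathcal{D}$ and $C \in \mathcal{C}$, the map
$$\Phi_{D,C}: \Hom_{\mathcal{C}}(F(D), C) \to \Hom_{\mathcal{D}}(D, G(C)), \qquad f \mapsto G(f) \circ \eta_D,$$
together with its candidate inverse
$$\Psi_{D,C}: \Hom_{\mathcal{D}}(D, G(C)) \to \Hom_{\mathcal{C}}(F(D), C), \qquad g \mapsto \varepsilon_C \circ F(g).$$
The first key step is to check $\Psi \circ \Phi = \id$ and $\Phi \circ \Psi = \id$: given $f: F(D) \to C$, compute $\Psi(\Phi(f)) = \varepsilon_C \circ F(G(f) \circ \eta_D) = \varepsilon_C \circ F(G(f)) \circ F(\eta_D) = f \circ \varepsilon_{F(D)} \circ F(\eta_D)$ by naturality of $\varepsilon$, and this equals $f$ by the counit equation $\varepsilon F \circ F\eta = 1_F$. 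The other composite is dual, using naturality of $\eta$ and the unit equation $G\varepsilon \circ \eta G = 1_G$. The second key step is naturality of $\Phi$ in both variables, i.e. that for morphisms $u: D' \to D$ in $\mathcal{D}$ and $v: C \to C'$ in $\mathcal{C}$ the appropriate square commutes; this is a routine diagram chase invoking functoriality of $G$ and nothing else. Together these show $\Phi$ is a natural isomorphism, hence $(F,G)$ is an adjoint pair.

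For the converse --- that an adjoint pair admits a counit--unit pair --- I would start from the natural isomorphism $\Phi: \Hom_{\mathcal{C}}(F(-),-) \cong \Hom_{\mathcal{D}}(-, G(-))$ and define $\eta_D := \Phi_{D, F(D)}(\id_{F(D)}) \in \Hom_{\mathcal{D}}(D, GF(D))$ and $\varepsilon_C := \Phi_{G(C), C}^{-1}(\id_{G(C)}) \in \Hom_{\mathcal{C}}(FG(C), C)$. Naturality of $\Phi$ (applied to $\eta_{(-)}$ with $u$ and to $\id$) shows $\eta$ and $\varepsilon$ are natural transformations, and one recovers the general formulas $\Phi(f) = G(f) \circ \eta_D$ and $\Phi^{-1}(g) = \varepsilon_C \circ F(g)$ from naturality applied to the single morphisms $f$ and $g$. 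The counit and unit equations then fall out by evaluating these formulas: e.g. $\varepsilon_{F(D)} \circ F(\eta_D) = \Phi^{-1}(\eta_D) = \Phi^{-1}(\Phi(\id_{F(D)})) = \id_{F(D)}$, which is exactly $\varepsilon F \circ F \eta = 1_F$, and symmetrically for $G \varepsilon \circ \eta G = 1_G$.

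I do not expect a genuine obstacle here, as the statement is classical; the only place demanding care is bookkeeping the naturality of $\Phi$ as a transformation of functors on $\mathcal{D} \times \mathcal{C}^{\op}$ --- in particular getting the variance right in the $\mathcal{C}$-slot --- and making sure the two triangle identities are matched to the correct composite when unwinding the formulas. Accordingly I would keep the diagram chases brief, stating the naturality squares used at each step rather than drawing every intermediate equality, and refer the reader to a standard reference (e.g. Mac Lane) for any detail omitted.
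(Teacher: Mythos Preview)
Your proposal is correct and follows essentially the same approach as the paper: both directions are handled by the standard explicit constructions, defining $\Phi_{Y,X}(f) = G(f) \circ \eta_Y$ with inverse $\varepsilon_X \circ F(g)$ in one direction, and extracting $\eta_Y = \Phi_{Y,FY}(1_{FY})$ and $\varepsilon_X = \Phi_{GX,X}^{-1}(1_{GX})$ in the other. The paper's treatment is terser than yours, but the content is identical.
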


Indeed, if $\Phi$ is an adjunction for $(F,G)$, then we may define
$$\varepsilon_X = \Phi_{GX,X}^{-1}(1_{GX}), \quad \eta_Y = \Phi_{Y,FY}(1_{FY}).$$
The counit and unit equations now follow, using naturality of $\Phi$. Conversely, given a counit--unit pair $(\varepsilon,\eta)$, set $\Phi_{Y,X}(f) = G(f) \circ \eta_Y$, so that $\Phi_{Y,X}^{-1}(g) = \varepsilon_X \circ F(g).$ Notice that, for an adjoint pair $(F,G)$, any one of the three pieces of data $\Phi, \varepsilon, \eta$ is sufficient to determine the others.

Throughout the paper, certain equivariant derived categories will play a central role and we spend time constructing them below. For that discussion, we assume knowledge of triangulated categories, derived categories, and constructible and perverse sheaves on stratified topological spaces. That said, let us introduce two types of categorical quotient. Sources for the next definitions and results are \cite[\href{https://stacks.math.columbia.edu/tag/02MN}{Tag 02MN}, \href{https://stacks.math.columbia.edu/tag/05RA}{Tag 05RA}]{stacks-project}.

\begin{defn}
If $F: \mathcal{C} \to \mathcal{C}'$ is an additive functor between additive categories $\mathcal{C}$, $\mathcal{C}'$, then the \textit{kernel} of $F$ is the full subcategory
$$\ker F = \{ C \in \mathcal{C}': F(C) \cong 0 \} \subseteq \mathcal{C}.$$
\end{defn}

\begin{defn}
Let $\mathcal{A}$ be an abelian category. A non-empty full subcategory $\mathcal{C} \subseteq \mathcal{A}$ is a \textit{Serre subcategory} in case, for any exact sequence $A' \to A \to A''$ in $\mathcal{A}$, we have that $A \in \mathcal{C}$ if and only if $A', A'' \in \mathcal{C}$.
\end{defn}

\begin{prop}
Suppose $\mathcal{C}$ is a Serre subcategory of the abelian category $\mathcal{A}$. There exists an abelian category $\mathcal{A}/\mathcal{C}$ (the \textit{Serre quotient}) and an exact functor $$T: \mathcal{A} \to \mathcal{A}/\mathcal{C}$$ which are characterised by the following universal property: for any abelian category $\mathcal{B}$ and exact functor $S: \mathcal{A} \to \mathcal{B}$ with $\mathcal{C} \subseteq \ker S$, there is a unique exact functor $\overline{S}: \mathcal{A}/\mathcal{C} \to \mathcal{B}$ with $S = \overline{S} \circ T$.
\end{prop}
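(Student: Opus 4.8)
The plan is to realise $\mathcal{A}/\mathcal{C}$ as a localisation of $\mathcal{A}$. Let $\Sigma$ be the class of morphisms $f$ in $\mathcal{A}$ such that both $\ker f$ and $\cok f$ lie in $\mathcal{C}$. The first and most substantial step is to check that $\Sigma$ is a multiplicative system admitting a calculus of both left and right fractions: it contains all isomorphisms, is stable under composition, and satisfies the Ore and cancellation conditions. This is precisely where the Serre axiom is used: given $s \in \Sigma$ and an arbitrary morphism $g$ sharing its target, one forms the pullback of $s$ along $g$ in $\mathcal{A}$ and, via the snake lemma together with the two-out-of-three stability of $\mathcal{C}$ under short exact sequences, verifies that the pulled-back copy of $s$ again lies in $\Sigma$; the cancellation condition is handled similarly, replacing a morphism by a suitable subobject or quotient whose defect lies in $\mathcal{C}$. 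One then sets $\mathcal{A}/\mathcal{C} := \Sigma^{-1}\mathcal{A}$, with $T \colon \mathcal{A} \to \mathcal{A}/\mathcal{C}$ the canonical localisation functor (the identity on objects); concretely $\Hom_{\mathcal{A}/\mathcal{C}}(A,B) = \varinjlim \Hom_{\mathcal{A}}(A',B/B')$, the colimit running over subobjects $A' \subseteq A$ with $A/A' \in \mathcal{C}$ and $B' \subseteq B$ with $B' \in \mathcal{C}$, with transition maps and composition induced from $\mathcal{A}$.

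Next I would install the additive structure: the zero object and the finite biproducts of $\mathcal{A}$ remain such after localisation, since the defining universal maps may be read off in the filtered colimit of Hom-groups; and each $\Hom_{\mathcal{A}/\mathcal{C}}(A,B)$ is an abelian group with biadditive composition because it is a filtered colimit of the abelian groups $\Hom_{\mathcal{A}}(A',B/B')$. For the abelian structure the key point is that every morphism in $\mathcal{A}/\mathcal{C}$ is isomorphic, as an arrow, to one of the form $T(f)$ for an honest $f \colon A \to B$ in $\mathcal{A}$ (clear denominators). Using that $T$ inverts $\Sigma$ and that $T(X) \cong 0$ iff $X \in \mathcal{C}$, one shows that $T(\ker f \into A)$ is a kernel of $T(f)$ and $T(B \onto \cok f)$ a cokernel. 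Finally, the canonical comparison morphism from the coimage of $T(f)$ to its image is the $T$-image of the corresponding comparison morphism in $\mathcal{A}$; the kernel and cokernel of the latter are subquotients of the objects already killed, hence lie in $\mathcal{C}$, so it belongs to $\Sigma$ and is sent to an isomorphism. Thus $\mathcal{A}/\mathcal{C}$ is abelian, and $T$ is exact: it is additive and, by the foregoing, carries kernels and cokernels in $\mathcal{A}$ to kernels and cokernels in $\mathcal{A}/\mathcal{C}$.

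For the universal property, let $S \colon \mathcal{A} \to \mathcal{B}$ be exact with $\mathcal{C} \subseteq \ker S$. If $s \in \Sigma$, then applying $S$ to $0 \to \ker s \to \cdot \to \ima s \to 0$ and $0 \to \ima s \to \cdot \to \cok s \to 0$ yields short exact sequences in $\mathcal{B}$ whose outer terms vanish, so $S(s)$ is an isomorphism. By the universal property of the localisation $\Sigma^{-1}\mathcal{A}$ there is a unique functor $\overline{S} \colon \mathcal{A}/\mathcal{C} \to \mathcal{B}$ with $S = \overline{S} \circ T$; this already delivers the uniqueness clause. That $\overline{S}$ is exact follows since $T$ is exact and the identity on objects: any short exact sequence in $\mathcal{A}/\mathcal{C}$ may, after clearing denominators, be obtained by applying $T$ to a short exact sequence in $\mathcal{A}$, which $S$ sends to a short exact sequence in $\mathcal{B}$.

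The main obstacle is the first step — verifying that $\Sigma$ admits a calculus of fractions — since this is where all the diagram chasing lives; once the localisation is in place, the additive and abelian axioms and the universal property are comparatively formal. All of this is carried out in detail in \cite{stacks-project}, which one may simply cite.
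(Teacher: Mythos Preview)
Your proposal is correct and follows the standard construction via localisation at the class of morphisms with kernel and cokernel in $\mathcal{C}$; this is precisely the approach in the Stacks Project reference you cite at the end. The paper itself does not give a proof of this proposition but simply records it with a citation to \cite[Tag 02MN]{stacks-project}, so your sketch is in fact more detailed than what the paper provides, while agreeing with the cited source.
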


\begin{prop}
Suppose $\mathcal{C}$ is a full triangulated subcategory of a triangulated category $\mathcal{D}$. There exists a triangulated category $\mathcal{D}/\mathcal{C}$ (the \textit{Verdier quotient}) and a triangulated functor $$Q: \mathcal{D} \to \mathcal{D}/\mathcal{C}$$ which are characterised by the following universal property: for any triangulated category $\mathcal{E}$ and triangulated functor $P: \mathcal{D} \to \mathcal{E}$ with $\mathcal{C} \subseteq \ker P$, there is a unique triangulated functor $\overline{P}: \mathcal{D}/\mathcal{C} \to \mathcal{E}$ with $P = \overline{P} \circ Q$.
\end{prop}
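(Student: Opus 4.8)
The plan is to construct $\mathcal{D}/\mathcal{C}$ as a localization of $\mathcal{D}$, following Verdier's classical recipe. First I would introduce the class of morphisms
$$ S = \{\, f \in \mathrm{Mor}(\mathcal{D}) : \cone(f) \in \mathcal{C} \,\}, $$
where $\cone(f)$ denotes the third vertex of any distinguished triangle on $f$ (well-defined up to isomorphism, since $\mathcal{C}$ is strictly full and closed under isomorphism). The goal is then to show that $S$ is a multiplicative system admitting a calculus of fractions and compatible with the triangulation, to form the Gabriel--Zisman localization $S^{-1}\mathcal{D}$, to equip it with a triangulated structure, and to verify the universal property. It is harmless, and convenient if one later wishes to identify $\ker Q$ exactly, to first pass to the thick closure of $\mathcal{C}$ --- the strictly full subcategory of direct summands of objects of $\mathcal{C}$, which is again triangulated --- but this plays no role in the statement at hand, so I will suppress it.

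The technical heart of the argument is to verify that $S$ is a multiplicative system compatible with the triangulation: it contains all identities and is stable under composition; it satisfies the left and right Ore conditions and the weak cancellation conditions; and it is stable under the shift functor $[1]$ and its inverse. Each point reduces to diagram-chasing with distinguished triangles, and it is precisely in completing a cospan $X \xrightarrow{f} Y \xleftarrow{s} Z$ with $s \in S$ to a commuting square whose new leg again lies in $S$ that the octahedral axiom enters, by building a triangle on an appropriate composite. Granting this, $S^{-1}\mathcal{D}$ exists, has the same objects as $\mathcal{D}$, has $\Hom$-sets computed by roofs $X \xleftarrow{s} Z \xrightarrow{g} Y$ with $s \in S$, and comes with a localization functor $Q \colon \mathcal{D} \to S^{-1}\mathcal{D} =: \mathcal{D}/\mathcal{C}$ that is universal among additive functors inverting $S$. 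One then transports the shift from $\mathcal{D}$ to $\mathcal{D}/\mathcal{C}$, declares a triangle distinguished exactly when it is isomorphic in $\mathcal{D}/\mathcal{C}$ to the image under $Q$ of a distinguished triangle of $\mathcal{D}$, and checks the axioms (TR1)--(TR4), so that $Q$ becomes triangulated. I expect this verification, together with the compatibility of $S$ with the triangulation, to be the main obstacle; everything else is comparatively formal.

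Finally I would establish $\mathcal{C} \subseteq \ker Q$ and the universal property. For $X \in \mathcal{C}$, the morphism $X \to 0$ lies in $S$ (its cone is $X[1] \in \mathcal{C}$), hence becomes invertible in $\mathcal{D}/\mathcal{C}$, so $Q(X) \cong 0$. Now let $P \colon \mathcal{D} \to \mathcal{E}$ be triangulated with $\mathcal{C} \subseteq \ker P$. For any $s \in S$, applying $P$ to a distinguished triangle on $s$ yields a distinguished triangle in $\mathcal{E}$ whose third vertex $P(\cone(s)) \cong 0$; since a morphism in a triangulated category with vanishing cone is an isomorphism (apply $\Hom(W,-)$ to the associated long exact sequence and use Yoneda), $P$ inverts $S$. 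The universal property of $S^{-1}\mathcal{D}$ then furnishes a unique additive functor $\overline{P} \colon \mathcal{D}/\mathcal{C} \to \mathcal{E}$ with $P = \overline{P} \circ Q$. It commutes with the shift and sends distinguished triangles of $\mathcal{D}/\mathcal{C}$ to distinguished triangles of $\mathcal{E}$ --- both being inherited from the corresponding properties of $P$ and the very definition of the triangulation on $\mathcal{D}/\mathcal{C}$ --- so $\overline{P}$ is triangulated; it is the unique such functor because $Q$ is the identity on objects and every morphism of $\mathcal{D}/\mathcal{C}$ is of the form $Q(g) \circ Q(s)^{-1}$. This completes the construction and its universal characterization.
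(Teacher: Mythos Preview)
The paper does not actually prove this proposition; it is stated as a background fact with a citation to the Stacks Project (\href{https://stacks.math.columbia.edu/tag/05RA}{Tag 05RA}). Your outline is the classical Verdier construction via calculus of fractions, which is exactly the approach taken in that reference, and the sketch is correct in its essentials: the multiplicative system $S$, the use of the octahedral axiom to verify the Ore conditions, the transported triangulation on $S^{-1}\mathcal{D}$, and the derivation of the universal property from the fact that any triangulated $P$ with $\mathcal{C}\subseteq\ker P$ inverts $S$ are all as they should be.
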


We will make frequent and essential use of the notion of a direct limit of a system of categories, particularly in the case of derived categories over a directed family of varieties related by pushforward functors. We describe this in the framework of \cite[Appendix A]{waschkies}; see also \cite[\textsection 5.2]{brylinski}.


\begin{defn}
Let $I$ be a directed set and $\{ \mathcal{C}_i \}_{i \in I}$ a directed system of (essentially) small categories with transition functors $F_{ij}$ for $i \le j$ in $I$, satisfying $F_{ii} = \id$ and appropriate coherence conditions with respect to a family of natural isomorphisms,
$$\alpha_{ijk}: F_{jk} \circ F_{ij} \overset{\cong}{\to} F_{ik}.$$
The \textit{direct limit} of this system is the category $\mathcal{C} = \varinjlim_i \mathcal{C}_i$ with objects given by $$\text{Ob}(\mathcal{C}) = \bigsqcup_i \text{Ob}(\mathcal{C}_i),$$
and morphism spaces as follows: for $X \in \mathcal{C}_i$ and $Y \in \mathcal{C}_j$, let 
$$\Hom_{\mathcal{C}}(X,Y) = \varinjlim_{k \ge i,j} \Hom_{\mathcal{C}_k}(F_{ik}X,F_{jk}Y).$$
The transition maps for this colimit are 
$$\Hom_{\mathcal{C}_k}(F_{ik}X,F_{jk}Y) \overset{F_{kl}}{\to} \Hom_{\mathcal{C}_k}(F_{kl} F_{ik}X, F_{kl} F_{jk}Y) \cong \Hom_{\mathcal{C}_k}(F_{il}X,F_{jl}Y),$$
for $l \ge k \ge i,j$, where the latter isomorphism is afforded by $\alpha_{ikl}^{-1}$ and $\alpha_{jkl}$.
\end{defn}
Notice that we have natural \textit{injection functors}
$$F_i: \mathcal{C}_i \to \mathcal{C},$$
defined on objects by the inclusion of $\text{Ob}(\mathcal{C}_i)$ into the disjoint union of object collections, and on morphisms by the universal maps induced by colimits. Importantly, if the transition functors $F_{ij}$ are fully faithful for all $i \le j$, then so are the injection functors $F_i$. There are also natural isomorphisms 
$$\sigma_{ij}: F_i \overset{\cong}{\to} F_j \circ F_{ij},$$
given on each object $X$ by the class of the identity map $F_{ij}(X) \to F_{ij}(X)$ in $\Hom_\mathcal{C}(X,F_{ij}(X)) = \varinjlim_{k \ge j} \Hom(F_{ik} X, F_{jk} F_{ij} X)$. Equipped with the $F_i$ and the $\sigma_{ij}$, the category $\mathcal{C}$ satisfies a suitable universal property as a 2-colimit in the 2-category of small categories; see \cite[Def. A.2.1, Prop. A.3.6]{waschkies}.

Several of the main categories we will work with are of the following type. 

\begin{defn}
An additive category $\mathcal{A}$ is said to be a \textit{Krull--Schmdit category} in case every object decomposes into a finite direct sum of objects having local endomorphism rings. 
\end{defn}
If $\mathcal{A}$ is Krull--Schmidt, then the class of objects in $\mathcal{A}$ having local endomorphism rings coincides with the class of indecomposable objects. Moreover, every object splits into a direct sum of indecomposable objects in a unique fashion (up to isomorphism and reordering of summands). The following convenient criterion may be found in \cite[Corollary 3.3.3]{krause}.

\begin{prop}
If $\mathcal{A}$ is an abelian category all of whose objects are of finite length, then $\mathcal{A}$ is Krull--Schmidt.
\end{prop}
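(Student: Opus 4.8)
The plan is to check the two requirements in the definition of a Krull--Schmidt category separately: that every object is a finite direct sum of indecomposables, and that every indecomposable object has local endomorphism ring. The first is a straightforward induction. Given $A \in \mathcal{A}$, induct on the length $\ell(A)$ (finite by hypothesis). If $A$ is zero or indecomposable there is nothing to prove; otherwise $A \cong A_1 \oplus A_2$ with both $A_i$ nonzero, so $\ell(A_i) < \ell(A)$, and the inductive hypothesis applied to each $A_i$ expresses $A$ as a finite direct sum of indecomposables.

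The substantive step is Fitting's lemma. Fix an indecomposable $A$ and $f \in \End(A)$. The descending chain $\im f \supseteq \im f^2 \supseteq \cdots$ has non-increasing finite lengths, and the ascending chain $\ker f \subseteq \ker f^2 \subseteq \cdots$ has non-decreasing finite lengths; hence there is an $n$ with $\im f^n = \im f^{2n}$ and $\ker f^n = \ker f^{2n}$. I would then show the canonical map $\ker f^n \oplus \im f^n \to A$ is an isomorphism: it is a monomorphism because any element of the intersection of the two subobjects has the form $f^n(y)$ with $f^{2n}(y)=0$, so $y \in \ker f^{2n} = \ker f^n$ and the element is $0$; it is an epimorphism because for $x \in A$ we have $f^n(x) = f^{2n}(y)$ for some $y$, whence $x = (x - f^n(y)) + f^n(y)$ with the first term in $\ker f^n$ and the second in $\im f^n$. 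Indecomposability of $A$ now forces $\ker f^n = 0$ (so $f^n$, and therefore $f$, is an isomorphism) or $\im f^n = 0$ (so $f$ is nilpotent).

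It remains to deduce that $R = \End(A)$ is local. By the previous paragraph every $f \in R$ is either invertible or nilpotent. If $f$ is not invertible, it is nilpotent, say $f^m = 0$, and then $1 - f$ is invertible with inverse $1 + f + \cdots + f^{m-1}$. So for every $f \in R$, one of $f$, $1-f$ is a unit, which is exactly the statement that $R$ is local. Together with the decomposition from the first paragraph, this shows $\mathcal{A}$ is Krull--Schmidt.

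The main obstacle I anticipate is giving a clean, rigorous treatment of Fitting's lemma entirely within the abelian-category framework, since the arguments for the intersection and sum above are phrased element-theoretically. I would handle this either by invoking the Freyd--Mitchell embedding theorem to reduce the two diagram-chasing claims to a module category, or by reformulating them as statements about the relevant pullback and pushout squares for the subobjects $\ker f^n$ and $\im f^n$ of $A$; whichever is lighter for the exposition. Everything else — the length induction, the chain-stabilization argument, and the geometric-series computation for $1-f$ — is routine.
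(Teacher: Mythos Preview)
Your proof is correct and follows the standard route via Fitting's lemma. Note, however, that the paper does not actually prove this proposition: it simply quotes it as \cite[Corollary~3.3.3]{krause} and moves on. So there is no ``paper's own proof'' to compare against; what you have written is essentially the argument one finds in Krause (or any standard reference), namely: existence of a finite indecomposable decomposition by induction on length, Fitting's decomposition $A \cong \ker f^n \oplus \im f^n$ for large $n$, and the consequence that $\End(A)$ consists of units and nilpotents, hence is local.

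Two small remarks. First, your worry about element-chasing is well placed but easily handled without Freyd--Mitchell: once the chains stabilise, $f^n$ restricts to an epimorphism $\im f^n \to \im f^{2n} = \im f^n$ with kernel $\im f^n \cap \ker f^n$, and the equality $\ker f^n = \ker f^{2n}$ forces this kernel to vanish as a subobject; thus $f^n|_{\im f^n}$ is an isomorphism, and its inverse composed with the inclusion splits the short exact sequence $0 \to \ker f^n \to A \to \im f^n \to 0$. Second, the step ``for every $f$, one of $f$ or $1-f$ is a unit, hence $R$ is local'' is indeed a valid characterisation of local rings, but since it is slightly less commonly stated than ``the non-units form an ideal'', you might add a one-line justification (e.g.\ two distinct maximal left ideals would yield $1 = a + b$ with $a$, $b = 1 - a$ both non-units).
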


\subsection{Roots and Weyl groups}
Throughout, we assume $G$ is a semisimple algebraic group of adjoint type over $\mathbb{F}$. Fix $$T \subseteq B \subseteq G$$ a maximal torus and Borel subgroup of $G$, with $U$ the unipotent radical of $B$; let $B^+$ denote the opposite Borel subgroup to $B$, with unipotent radical $U^+$. Associated to this data is a root system
$$(\Phi \subseteq \textbf{X}, \Phi^\vee \subseteq \textbf{X}^\vee);$$
we write $\Phi_+ \subseteq \Phi$ for the positive roots opposite to $B$ and $\Sigma \subseteq \Phi_+$ for the simple roots. These give rise to a Coxeter group $(W_{\text{f}},S_{\text{f}})$, the finite Weyl group generated by finite simple reflections $s_\alpha$, $\alpha \in \Sigma$. Our assumptions on $G$ ensure the existence of an element $\rho^\vee \in \textbf{X}^\vee$ such that $\langle \alpha, \rho^\vee \rangle = 1$ for all $\alpha \in \Sigma$. We assume from now on that $\ell > h$, the Coxeter number of $\Phi$.

Considering the extended torus $\widetilde{T} = T \times \Gm$ gives us access to affine roots of the form $\alpha + m \delta \in X^*(\widetilde{T})$ for $m \in \mathbb{Z}$, where $\delta \in X^*(\widetilde{T})$ is the projection onto the factor $\Gm$. The affine Weyl group is
$$(W = W_{\text{f}} \ltimes \mathbb{Z} \Phi^\vee,S);$$
here the affine simple reflections $S$ comprise $S_{\text{f}}$ along with elements $t_{\beta^\vee} s_\beta$, where $\beta \in \Phi$ is maximal in the ordering determined by $\Phi_+$ and $t_{\beta^\vee}$ denotes the image of $\beta^\vee \in \mathbb{Z} \Phi^\vee$ in $W$. We consider the standard action of $W$ on $V = \textbf{X}^\vee \otimes_{\mathbb{Z}} \mathbb{R}$, along with the recentred and dilated actions
$$(wt_\nu) \bullet_n v = w(x + n\nu + \rho^\vee) - \rho^\vee, \quad (wt_\nu) \square_n v = w(x + n \nu).$$
\begin{rmk}
Our focus on $\textbf{X}^\vee$ arises from our intention to state representation-theoretic results for $\textbf{G}$, after making geometric arguments on the side of $G$.
\end{rmk}
\subsection{Blocks, translations, and tilting objects}
Write $\Rep(\textbf{G})$ for the abelian category of finite-dimensional algebraic representations of $\textbf{G}$. It is monoidal with the tensor product $\otimes$ over $\bk$, and its simple objects $\mathsf{L}(x)$ are parametrised by $x \in \textbf{X}_+^\vee$. 
The \textit{linkage principle} provides a decomposition of $\Rep(\textbf{G})$ into a direct sum of abelian subcategories,
$$\Rep(\textbf{G}) = \bigoplus_{c \in \textbf{X}^\vee/(W,\bullet_\ell)} \Rep_c(\textbf{G}),$$
where the \textit{block} $\Rep_c(\textbf{G})$ is the Serre subcategory generated by the $\mathsf{L}(x)$ with $x \in c \cap \textbf{X}_+^\vee$. (Note that while blocks of a category are often understood to be indecomposable, $\Rep_c(\textbf{G})$ need not be.) Now, let
$$C_{\mathbb{Z}} = \{ x \in \textbf{X}^\vee: \text{$0 < \langle \alpha, x + \rho^\vee \rangle < \ell$ for all $\alpha \in \Phi_+$} \}.$$
Since $\ell > h$, there is a \textit{regular weight} $\lambda_0 \in C_{\mathbb{Z}} \ne \varnothing$; we could take $\lambda_0 = 0$, but do not insist on it. For each $s \in S$, we let $\mu_s$ denote a \textit{subregular weight} lying on the reflection hyperplane of $s$ in $\overline{{C}_{\mathbb{Z}}}$, with respect to the $\bullet_\ell$-action of $W$, but no other such hyperplanes; these exist in our setting by \cite[\textsection II.6.3]{jan}. We then have the \textit{principal block} $\Rep_0(\textbf{G}) = \Rep_{[\lambda_0]}(\textbf{G})$ and the \textit{subregular blocks} $\Rep_s(\textbf{G}) = \Rep_{[\mu_s]}(\textbf{G}).$ 

An important theoretical role is played by \textit{translation functors} between the blocks. Specifically, we have translation functors onto and off the $s$-walls, $s \in S$:
$$T^s = T_{\lambda_0}^{\mu_s}, \quad T_s = T_{\mu_s}^{\lambda_0},$$
as defined in \cite[\textsection II.7]{jan}; up to natural isomorphism, these are independent from our choices of $\lambda_0$ and $\mu_s$, but depend in their definition on a non-canonical choice of module tensor factor. The functors $(T^s,T_s)$ are left and right adjoint to each other, and restrict to functors between the principal and subregular blocks:
$$T^s: \Rep_0(\textbf{G}) \to \Rep_s(\textbf{G}), \quad T_s: \Rep_s(\textbf{G}) \to \Rep_0(\textbf{G}).$$
Translation onto the $s$-wall then off it yields 
the \textit{wall-crossing functor} $\theta_s = T_s T^s$, a self-adjoint endofunctor of the principal block.

Our final recollection on $\Rep(\textbf{G})$ is that it has the structure of a \textit{highest weight category}, descending to all of its blocks, in the sense described in \cite{rw} and originally in \cite{cps}. In particular, there are \textit{standard} and \textit{costandard} objects $\Delta(x)$, $\nabla(x)$ in $\Rep(\textbf{G})$ (resp. $\Rep_0(\textbf{G})$, resp. $\Rep_s(\textbf{G})$) for $x \in \textbf{X}_+^\vee$ (resp. $x \in [\lambda_0] \cap \textbf{X}_+^\vee$, resp. $x \in [\mu_s] \cap \textbf{X}_+^\vee$), admitting morphisms
$$\Delta(x) \twoheadrightarrow \mathsf{L}(x) \hookrightarrow \nabla(x).$$
Objects which possess a filtration by standard objects and a filtration by costandard objects are said to be \textit{tilting}. These form additive (but not abelian) subcategories 
$$\Tilt \subseteq \Rep(\textbf{G}), \quad \Tilt_0 \subseteq \Rep_0(\textbf{G}), \quad \Tilt_s \subseteq \Rep_s(\textbf{G}),$$
all of which are closed under $\otimes$ and respected by the relevant translation functors. Furthermore, these categories are Krull--Schmidt, with the indecomposable tilting objects $\mathsf{T}(x)$ in $\Tilt$ (resp. $\Tilt_0$, resp. $\Tilt_s$) parametrised by $x \in \textbf{X}_+^\vee$ (resp. $x \in [\lambda_0] \cap \textbf{X}_+^\vee$, resp. $x \in [\mu_s] \cap \textbf{X}_+^\vee$).

\section{Geometric ingredients} \label{sec:ing}
\subsection{Loop groups and the affine Grassmannian}
Detailed treatments of the objects introduced in this subsection can be found in \cite{go}, \cite{kumar}, and \cite{zhu}; much of our notation follows \cite[\textsection 4]{st}. For $n \ge 1$, the \textit{$n$-th positive loop group} $L_n^+G$ of $G$ is the affine group scheme representing the functor from $\mathbb{F}$-algebras to sets given by
$$A \mapsto G(A [\![z^n]\!]).$$
It is a subfunctor of the \textit{loop group} $L_n G$, an ind-affine group ind-scheme representing the functor from $\mathbb{F}$-algebras to sets given by
$$A \mapsto G(A(\!(z^n)\!)).$$
We suppress $n$ from notation in case $n = 1$. Root subgroups $u_\alpha: \mathbb{G}_a \overset{\sim}{\to} U_\alpha \subseteq G$, for $\alpha \in \Phi$, give rise to \textit{affine root subgroups} $U_{\alpha + m \delta} \subseteq LG$, the images of morphisms described by the formula $x \mapsto u_\alpha(xz^m).$

Let $A$ be an $\mathbb{F}$-algebra. For any $a \in A^\times$, there is a map of $\mathbb{F}$-algebras
$$A(\!(z)\!) \to A(\!(z)\!), \quad z^m \mapsto a^m z^m.$$
This yields $G(A(\!(z)\!)) \to G(A(\!(z)\!))$, and hence a \textit{loop rotation} action of $\mathbb{G}_m$ on $LG$ stabilising $L^+G$. Since
$$(\Spec A(\!(z)\!))/{\varpi_n} \cong \Spec A(\!(z^n)\!),$$
we can identify $(LG)^{\varpi_n} \cong L_nG$ and $(L^+G)^{\varpi_n} \cong L_n^+G;$ see \cite[Lemma 4.2]{st}.

The \textit{affine Grassmanian} of $G$ is the ind-projective ind-scheme of ind-finite type $\Gr$ representing the fppf sheafification of the functor
$$A \mapsto (LG)(A)/(L^+G)(A).$$
For $\lambda \in \textbf{X}^\vee$, the image of $z$ under the mapping $\mathbb{F}(\!(z)\!)^\times \to G(\mathbb{F}(\!(z)\!))$ induced by $\lambda$ yields a point $z^\lambda$ in $LG$. If we denote by $L_\lambda$ the coset of $z^\lambda$ in $\Gr$, then every $L^+G$-orbit on $\Gr$ has the form $\Gr^\lambda = L^+G \cdot L_\lambda$ for $\lambda \in \textbf{X}_+^\vee$.

\subsection{Partial affine flag varieties} \label{s:flag}
Recall $V = \textbf{X}^\vee \otimes_{\mathbb{Z}} \mathbb{R}$ and fix $n \ge 1$ a positive integer. To each affine root $a = \alpha + m \delta$ we associate the affine function
$$f_a^n: V \to \mathbb{R}, \quad f_a^n(v) = \langle a, v \rangle + mn$$
and the family of affine reflections
$s_a = t_{m \alpha^\vee} s_\alpha,$ whose $\square_n$-action on $V$ is given by
$$s_a \square_n v = v - f_{\alpha-m \delta}^n(v) \alpha^\vee.$$
The zero sets of the affine functions, or equivalently the $\square_n$-fixed points of the affine reflections, form a hyperplane arrangement giving rise to a system of \textit{facets}. In particular we have
$$\textbf{a}_n = \{ \lambda \in V: \text{$-n < \langle \alpha, \lambda \rangle < 0$ for all $\alpha \in \Phi_+$} \},$$
an \textit{alcove} whose closure is a fundamental domain for the $\square_n$-action of $W$ on $V$. For every facet $\textbf{f} \subseteq \overline{\textbf{a}_n}$, Bruhat--Tits theory (as described in \cite[\textsection 4.2]{st}) provides a \textit{parahoric} group scheme $P^{\textbf{f}}$ over the ring $\mathcal{O}_n = \mathbb{F}[\![z^n]\!]$, such that
$P^{\textbf{a}_n} \subseteq P^{\textbf{f}}.$
Let $L_n^+ P^{\textbf{f}} \subseteq L_n^+ G$ be the affine subgroup scheme representing the functor
$$A \mapsto P^{\textbf{f}}(A[\![z]\!]).$$
We then have the \textit{partial affine flag variety} $\Fl_n^\textbf{f}$, the ind-projective ind-scheme of ind-finite type representing the fppf sheafification of the functor
$$A \mapsto (L_nG)(A)/(L_n^+P^{\textbf{f}})(A).$$
For $\lambda \in -\overline{\textbf{a}_n}$, denote by $\textbf{f}_\lambda \subseteq \overline{\textbf{a}_n}$ the facet containing $-\lambda$. Then the maps $$L_n G \to \Gr^{\varpi_n}, \quad g \mapsto g \cdot L_\lambda$$
factor through embeddings $\Fl_n^{\textbf{f}_\lambda} \hookrightarrow \Gr^{\varpi_n}$. Their images $\Gr_{(\nu)}^n$ feature in the beautiful and crucially important decomposition of fixed points \cite[Proposition 4.6]{st}:
$$\Gr^{\varpi_n} \cong \bigsqcup_{\nu \in -\overline{a_n} \cap \textbf{X}^\vee} \Gr_{(\nu)}^n, \quad \text{where} \quad \Gr_{(\nu)}^n \cong \Fl_n^{\textbf{f}_\nu} \overset{j_{(\nu)}^n}{\longhookrightarrow} \Gr^{\varpi_n}.$$
In the context of $\varpi_n$-fixed points, omission of $n$ from notation will correspond to the case $n = \ell$, so we will write e.g. $\Gr_{(\nu)} = \Gr_{(\nu)}^\ell$ and $j_{(\nu)} = j_{(\nu)}^\ell$.

Our main interest will be in the following special affine flag varieties. Firstly, note that evaluation at zero $A [\![z^n]\!] \to A$ yields a morphism $\text{ev}_n: L_n^+G \to G$. The \textit{$n$-th Iwahori group} is $I_n = \text{ev}_n^{-1}(B);$ it coincides with the positive loop group $L_n^+ P^{\textbf{a}_n}$. The corresponding partial affine flag variety is written $\Fl_n$ and known simply as the \textit{$n$-th affine flag variety} of $G$; it admits a decomposition into $I_n$-orbits,
$$\Fl_n = \bigsqcup_{x \in W} \Fl_{n,x}, \quad \text{where $\Fl_{n,x} = I_n \cdot x I_n/I_n.$}$$
We let $I_{n,u} = \text{ev}_n^{-1}(U)$ denote the pro-unipotent radical of $I_n$. Replacing $B$ (resp. $U$) with $B^+$ (resp. $U^+$) yields opposite groups
$$I_n^+ = w_0 I_n w_0, \quad I_{n,u}^+ = w_0 I_{n,u} w_0.$$
Note that the $L^+G$-orbits on $\Gr$ decompose into finitely many Iwahori orbits,
$$\Gr^\lambda = \bigsqcup_{\mu \in W_{\text{f}}(\lambda)} \Gr_\mu, \quad \text{where $\Gr_\mu = I \cdot L_\mu = I_u \cdot L_\mu$},$$
and an analogous statement hold for $I^+$ and $I_u^+$.

Secondly, observe that $\mu = \mu_s + \rho^\vee \in -\overline{a}_\ell \cap \textbf{X}^\vee$, so we have a parahoric group scheme $P^{\textbf{f}_s}$ and positive loop group $\mathcal{P}_n^s = L_n^+ P^{\textbf{f}_s}$ for the facet $\textbf{f}_s = \textbf{f}_{\mu_s+\rho^\vee}$; we write $\Fl_n^s = \Fl_n^{\textbf{f}_s}$. If $s \in W_{\text{f}}$ is a finite simple reflection, then $\mathcal{P}_n^s$ is the inverse image $\text{ev}_n^{-1}(P_s)$ of the standard parabolic subgroup $P_s \subseteq G$ containing $B$. In any case,
$I_n \subseteq \mathcal{P}_n^s$ and there is a natural proper morphism $q_n^s: \Fl_n \to \Fl_n^s.$
This morphism is a $\mathbb{P}_{\mathbb{F}}^1$-bundle and hence such that $(q_n^s)^! \cong (q_n^s)^*[2]$; see \cite[8.e.1, Prop. 8.7]{pr}.  

Until further notice, we assume the loop index $n = 1$ to simplify notation.
\begin{prop}
$L^+G$ and $I$ are stable under the action of $\mathbb{G}_m$ on $LG$. 
\end{prop}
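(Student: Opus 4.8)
The plan is to unwind the definitions of the loop rotation action and of the subgroups $L^+G$ and $I$, and check stability directly on the level of $R$-points for an arbitrary $\mathbb{F}$-algebra $R$. Recall that for $a \in R^\times$ the loop rotation action is induced by the $\mathbb{F}$-algebra automorphism $\phi_a$ of $R(\!(z)\!)$ sending $z^m \mapsto a^m z^m$. First I would observe that this automorphism restricts to an automorphism of the subring $R[\![z]\!]$, since $\phi_a(z^m) = a^m z^m \in R[\![z]\!]$ for all $m \geq 0$ (and $a^m \in R$). Consequently the induced map $G(R(\!(z)\!)) \to G(R(\!(z)\!))$ carries the subgroup $G(R[\![z]\!])$ into itself, which is exactly the statement that $L^+G$ is stable under $\mathbb{G}_m$.

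For the Iwahori subgroup $I$, I would use the description $I = \mathrm{ev}^{-1}(B)$, where $\mathrm{ev}: L^+G \to G$ is evaluation at $z = 0$. The key point is that the loop rotation action is compatible with evaluation at zero: the automorphism $\phi_a$ of $R[\![z]\!]$ fixes the constant term, i.e. the composite $R[\![z]\!] \xrightarrow{\phi_a} R[\![z]\!] \to R$ equals the augmentation $R[\![z]\!] \to R$. Therefore, on $R$-points, $\mathrm{ev} \circ (\text{loop rotation by } a) = \mathrm{ev}$ as maps $L^+G(R) \to G(R)$. Since loop rotation already preserves $L^+G$ by the previous paragraph, and since it commutes with $\mathrm{ev}$, it must carry $\mathrm{ev}^{-1}(B)$ into $\mathrm{ev}^{-1}(B) = I$; applying the same reasoning to the inverse action $\phi_{a^{-1}}$ gives stability (not merely invariance under a single element). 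To make this functorial rather than pointwise, I would note that all the maps involved ($\phi_a$ for varying $a$, hence the action morphism $\mathbb{G}_m \times LG \to LG$) are morphisms of ind-schemes, so it suffices to check the set-theoretic containment on $R$-points for all $R$, which is what the above does.

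The only mildly delicate point — and the step I would flag as the main obstacle, though it is minor — is being careful that ``stable'' is understood scheme-theoretically: one wants the action morphism $\mathbb{G}_m \times L^+G \to LG$ to factor through $L^+G$, and similarly for $I$. This follows formally once one has the containment on $R$-points functorially in $R$, because $L^+G$ (resp. $I$) is a closed, in fact $z$-adically pro-representable, subfunctor of $LG$ cut out by the condition that the relevant matrix entries lie in $R[\![z]\!]$ (resp. together with a reduction-mod-$z$ condition landing in $B$), and these are precisely the conditions preserved by $\phi_a$. I would close by remarking that the same argument applies verbatim with $n$ reinstated, giving stability of $L_n^+G$ and $I_n$ under the residual $\mathbb{G}_m$-action, since $\phi_a$ preserves $R[\![z^n]\!]$ and its augmentation to $R$.
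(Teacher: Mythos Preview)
Your proof is correct and follows essentially the same approach as the paper: the key observation is that the evaluation map $\mathrm{ev}: L^+G \to G$ intertwines loop rotation with the trivial $\mathbb{G}_m$-action on $G$, which is exactly what the paper states in one line. Your write-up simply unpacks this into the two constituent checks (that $\phi_a$ preserves $R[\![z]\!]$ and fixes the constant term), and adds the functor-of-points justification for scheme-theoretic stability; the paper leaves these implicit.
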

\begin{proof}
This follows directly from the observation that $\text{ev}: L^+G \to G$ intertwines the action of $\mathbb{G}_m$ (letting $\mathbb{G}_m$ act trivially on $G$).
\end{proof}

Accordingly, we obtain a $\mathbb{G}_m$-action on $\Fl$. Since it stabilises $I$, the action also preserves $I$-orbits. In particular, if $X \subseteq \Fl$ is a locally closed finite union of strata $\Fl_x$, then $X$ admits an action of $I \rtimes \mathbb{G}_m$.

\subsection{Equivariant derived categories on partial affine varieties}
At all times in the sequel, our schemes will be defined over $\mathbb{F}$ and we will work in the ``{\'e}tale context'' described in \cite[\textsection 9.3(2)]{rw}, referring to (possibly equivariant) derived categories of {\'e}tale sheaves over a coefficient ring $\mathbb{L}$. We discuss the equivariant derived category as introduced in \cite{bl} for the topological setting, but note that the necessary adjustments for {\'e}tale sheaves are provided in \cite{weidner}. 
 
\begin{lem} \label{finquot}
Suppose $L$ is a connected ind-affine group ind-scheme acting on a projective $\mathbb{F}$-variety $P$ of finite type. Then the action of $L$ on any locally closed $L$-stable subvariety $X \subseteq P$ factors through a quotient group scheme of finite type. 
\end{lem}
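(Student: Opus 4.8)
The plan is to exploit the ind-finite type structure of $P$: since $P$ is a projective variety of finite type, it has only finitely many "levels" in the sense that $P$ itself is already of finite type, so any locally closed $X \subseteq P$ is of finite type. The issue is therefore not the target but the group: $L$ is typically infinite-dimensional (e.g. a loop group or Iwahori subgroup), so I want to show that its action on $X$ factors through a finite-type quotient. First I would observe that, since $L$ is ind-affine, we may write $L = \varinjlim_m L_m$ as a filtered colimit of affine schemes of finite type (not necessarily subgroups), with the group structure compatible in the limit; alternatively, since $L$ is an ind-affine group ind-scheme acting algebraically, the action morphism $a \colon L \times X \to X$ is itself a morphism of ind-schemes, and because $X$ is of finite type, $a$ factors through $L_m \times X$ for some $m$.

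Next I would pass to the kernel of the action. Consider the "pointwise stabilizer" or ineffectivity kernel $N = \bigcap_{x \in X} \mathrm{Stab}_L(x)$, a closed normal subgroup ind-subscheme of $L$. The claim is that $L/N$ is of finite type. To see this, pick finitely many points (or, better, a finite affine open cover) witnessing that the action map factors through $L_m \times X$; then the condition "$g$ acts trivially on $X$" becomes a closed condition already detectable at level $L_m$, so $N$ is "co-finite-type" in $L$, i.e. $L/N$ is the image of $L_m$ under the quotient map and hence of finite type as a scheme. Since $L$ is connected, so is $L/N$, and the $L$-action on $X$ descends to an action of the finite-type group scheme $L/N$.

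The cleanest route, which I would actually adopt, is to use the congruence-subgroup / thickening structure directly: in all cases of interest $L$ is either $L^+G$ (or a parahoric $L_n^+ P^{\mathbf{f}}$), where the congruence subgroups $L^{+,(k)}G = \ker(G(A[\![z]\!]) \to G(A[z]/z^k))$ form a cofinal system of normal subgroups with $L^+G / L^{+,(k)}G$ of finite type, or $I$, which contains such a congruence subgroup with finite-type quotient; and since $X$ is a finite union of $I$-orbits (equivalently $L^+G$-orbits) each of finite dimension, some deep enough congruence subgroup $L^{+,(k)}G$ acts trivially on every stratum, hence on $X$. Then $L$ acts through $L/L^{+,(k)}G$, which is a group scheme of finite type. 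For a general connected ind-affine $L$ one reduces to this case, or argues abstractly via the factorization of the action morphism as above.

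The main obstacle is making precise the claim that the ineffectivity kernel $N$ (or the relevant congruence subgroup) is large enough that the quotient is genuinely of finite type as a scheme, rather than merely an ind-scheme — this requires knowing that the action morphism $L \times X \to X$ factors through $L_m \times X \to X$ for a single finite-type "chunk" $L_m$, which in turn uses quasi-compactness of $X$ (valid since $X$ is of finite type over $\mathbb{F}$) together with the ind-scheme structure of $L$. Once that factorization is in hand, everything else is formal: the quotient $L \to L/N$ is faithfully flat, $L/N$ inherits finite type, connectedness passes to the quotient, and the $X$-action factors through it by construction.
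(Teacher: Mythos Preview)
Your congruence-subgroup argument is correct for the specific groups that actually arise in the paper ($L^+G$, $I$, parahorics), and would suffice for all applications. However, as a proof of the lemma \emph{as stated} --- for an arbitrary connected ind-affine group ind-scheme $L$ --- it is incomplete: you never carry out the promised reduction of the general case to ``this case,'' and your abstract fallback argument contains a mis-step. The claim that ``because $X$ is of finite type, $a$ factors through $L_m \times X$ for some $m$'' has the factorization principle backwards. A morphism from a \emph{finite-type} scheme into an ind-scheme factors through a finite piece of the target; but here the ind-scheme $L$ is the \emph{source}, and the action $a\colon L\times X\to X$ is genuinely defined on all of $L$. There is no formal reason for the restriction $a|_{L_m\times X}$ to determine $a$ globally, nor for the ineffectivity kernel $N$ to be ``co-finite-type'' in $L$ from this alone. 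Your subsequent sentence (``the condition `$g$ acts trivially on $X$' becomes a closed condition already detectable at level $L_m$'') does not rescue this: detecting a closed condition on each $L_m$ says nothing about the size of $L/N$.

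The paper's route avoids this entirely by working with the \emph{target} rather than the source: since $P$ is projective over a field, $\mathrm{Aut}(P)$ is representable by a group scheme whose neutral component $\mathrm{Aut}^\circ(P)$ is of finite type (a theorem of Grothendieck/Matsumura--Oort, cited from Brion). Connectedness of $L$ forces the homomorphism $L\to\mathrm{Aut}(P)$ to land in $\mathrm{Aut}^\circ(P)$, and the action on $X\subseteq P$ then factors through the image, a finite-type quotient of $L$. This is both shorter and genuinely general; the projectivity of $P$ is used essentially, which your proposal never exploits.
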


\begin{proof}
Since $P$ is a projective variety over a field, $\text{Aut}(P)$ is naturally a scheme with neutral component $\text{Aut}^\circ(P)$ of finite type \cite{brion}. Recalling that $L$ is assumed to be connected, we have a group homomorphism $\varphi: L \to \text{Aut}^\circ(P)$. Now the action of $L$ on $P$ factors through the image of $\varphi$, and the action on $X$ is then obtained by restriction.
\end{proof}

Suppose $X$ is a locally closed finite union of $I$-orbits in $\Fl$ (resp. $\Fl^s$). By Lemma \ref{finquot}, $I \subseteq L^+ G$ acts on $X$ through a quotient group scheme $\Gamma$ of finite type. Since $I \cong I_{0} \rtimes G$, where $$I_{0} = \ker(\text{ev}: L^+G \to G)$$ is the \textit{pro-unipotent radical} of $I$, we are free to insist that the kernel of $I \to \Gamma$ is contained in $I_{0}$. Such a finite quotient $\Gamma$ will be called \textit{suitable}. Through the construction in \cite{bl}, we may consider the $\Gamma$-\textit{equivariant derived category} $D_\Gamma^b(X,\L)$. As mentioned in \cite[A.4]{br} and the appendix to \cite{gait}, $D_\Gamma^b(X,\L)$ is independent of the choice of suitable $\Gamma$: two suitable groups $\Gamma$ and $\Gamma'$ admit a common suitable refinement $\Gamma''$, and group surjections with unipotent kernels induce equivalences of equivariant derived categories:
$$D_\Gamma^b(X,\L) \cong D_{\Gamma''}^b(X,\L) \cong D_{\Gamma'}^b(X,\L).$$
Thus we can safely define 
$$D_I^b(X,\L) = D_\Gamma^b(X,\L)$$
for any locally closed finite union of strata $X$ and any suitable $\Gamma$ acting on $X$. Notice that every inclusion $X \hookrightarrow Y$ of locally closed subsets induces a fully faithful pushforward functor
$$D_I^b(X,\L) \to D_I^b(Y,\L).$$
Thus we can take a direct limits over those $X$ which are closed to obtain a triangulated equivariant derived category $D_I^b(\Fl,\L)$ (resp. $D_I^b(\Fl^s,\L)$). By a similar process we may construct $D_{I \rtimes \mathbb{G}_m}^b(\Fl,\L)$ (resp. $D_{I \rtimes \mathbb{G}_m}^b(\Fl^s,\L)$). 

\subsection{(Loop rotation equivariant) Iwahori--Whittaker derived categories} \label{s:iw}
The material in this section draws from \cite[Appendix A]{ar} and \cite[\textsection 5.1--5.2]{st}. Let $\mathscr{X}$ denote either the affine Grassmannian $\Gr$ or a partial affine flag variety such as $\Fl$ and $\Fl^s$. Assume there exists a non-trivial $p$-th root of unity in $\zeta \in \bk$, then let
$$\tau: \mathbb{G}_a \to \mathbb{G}_a, \quad x \mapsto x^p - x,$$
be the \textit{Artin--Schreier map}; this is a Galois covering with Galois group $\mathbb{F}_p$. We define the associated \textit{Artin--Schreier local system} $\mathscr{L}_{\text{AS}}$ to be the summand of $\tau_* \underline{\bk}_{\mathbb{G}_a}$ on which $\mathbb{F}_p$ acts by powers of $\zeta$. Finally, let $$\chi = \chi_0 \circ \text{ev}: I_u^+ \to U^+ \to \mathbb{G}_a,$$ where $\chi_0: U^+ \to \mathbb{G}_a$ is a fixed morphism of algebraic groups which is non-trivial on any simple root subgroup of $U^+$. 

If $X \subseteq \mathscr{X}$ is a locally closed finite union of $I^+$-orbits, then $I_u^+$ acts on $X$ through some finite quotient $J$ of $I_u^+$, which can be chosen in such a way that $\chi$ factors through a morphism $\chi_J: J \to \mathbb{G}_a$. We can then consider the $(J,\chi_J^* \mathscr{L}_{\text{AS}})$-equivariant derived category $D_{\IW}^b(X,\bk) = D_{J,\chi}^b(X,\bk)$: this is the full subcategory of $D_c^b(X,\bk)$ whose objects $F$ are such that 
\begin{equation} \label{iwcond}
    a_J^* F \cong \chi_J^* \mathscr{L}_{\text{AS}} \boxtimes F,
\end{equation}
where $a_J: J \times X \to X$ is the action map. As with previous constructions, this category is independent of our choices (up to equivalence). Importantly, we have an essentially surjective \textit{averaging functor}
\begin{equation} \label{avg}
D_c^b(X,\bk) \to D_{\IW}^b(X,\bk), \quad F \mapsto (a_J)_!(\chi_J^* \mathscr{L}_{\text{AS}} \boxtimes F).
\end{equation} 
Taking a direct limit over closed $X$, we obtain a triangulated
\textit{Iwahori--Whittaker category} $D_{\IW}^b(\mathscr{X},\bk)$. This category has a natural perverse t-structure with heart $\Perv_{\IW}(\mathscr{X},\bk)$, and admits a fully faithful forgetful functor
$$D_{\IW}^b(\mathscr{X},\bk) \to D_c^b(\mathscr{X},\bk).$$

Assume now that the action of $I^+$ on $\mathscr{X}$ extends to an action of $I^+ \rtimes \mathbb{G}_m$. Then, for each $X$ as above, the finite-type quotient $J$ of $I_u^+$ can be chosen in such a way that the action of $\mathbb{G}_m$ on $I^+$ descends to an action on $J$. We can hence consider $D_{\IW, \mathbb{G}_m}^b(X,\bk)$, the full subcategory of $D_{\mathbb{G}_m,c}^b(X,\bk)$ whose objects $F$ satisfy the analogue of the condition \eqref{iwcond} in $D_{\mathbb{G}_m}^b(J \times X,\bk)$; here $\mathbb{G}_m$ is assumed to act on $J \times X$ diagonally. 

As with the previous construction, $D_{\IW, \mathbb{G}_m}^b(X,\bk)$ is triangulated with a natural t-structure, and is independent of our choices up to equivalence. Its heart will be written $\Perv_{\IW, \mathbb{G}_m}(X,\bk)$.
Taking a direct limit over those $X$ which are closed, we obtain a triangulated \textit{loop rotation equivariant Iwahori--Whittaker category} $D_{\IW, \mathbb{G}_m}(\mathscr{X},\bk)$, with heart $\Perv_{\IW, \mathbb{G}_m}(\mathscr{X},\bk)$ with respect to the inherited t-structure. There is a functor of forgetting $\mathbb{G}_m$-equivariance,
$$D_{\IW,\mathbb{G}_m}^b(\mathscr{X},\bk) \to D_{\IW}^b(\mathscr{X},\bk).$$

\subsection{Convolution products} \label{s:conv}
We will need a \textit{convolution product} on $D_{I \rtimes \mathbb{G}_m}^b(\mathscr{X},\L)$ in cases where $\mathscr{X}$ is the affine Grassmannian or a partial affine flag variety. Given $E, F \in D_{I \rtimes \mathbb{G}_m}^b(\mathscr{X},\L)$, choose a locally closed finite union of $I$-orbits $X \subseteq \mathscr{X}$ from which $F$ is pushed forward, and suppose $J$ is a suitable quotient of $I \rtimes \mathbb{G}_m$ expressing its action on $X$: 
$$1 \to H \to I \rtimes \mathbb{G}_m \to J \to 1$$
We can then consider the diagram
$$\mathscr{X} \times X \leftarrow (LG \rtimes \mathbb{G}_m)/H \times X \overset{q}{\to} (LG \rtimes \mathbb{G}_m)/H \times^{J} X \to \mathscr{X}.$$
The morphism $q$ induces an equivalence
$$q^*: D_J^b((LG \rtimes \mathbb{G}_m)/H \times^J X,\L) \to D_{J \times J}^b((LG \rtimes \mathbb{G}_m)/H \times X,\L),$$
by \cite[Theorem 2.6.2]{bl}, since the left action of $J$ on $(LG \rtimes \mathbb{G}_m)/H$ is free. Thus, up to isomorphism, there is a unique object $E \widetilde{\boxtimes} F$ such that
$$q^*(E \widetilde{\boxtimes} F) \cong p^*(E) \boxtimes F,$$
where $p: (LG \rtimes \mathbb{G}_m)/H \to \mathscr{X}$ is the projection. We then define 
$$E \star F = m_*(E \widetilde{\boxtimes} F) \in D_{I \rtimes \mathbb{G}_m}(\mathscr{X},\L).$$
Similarly, we have
$$
D_{L^+G}^b(\Gr,\L) \curvearrowleft D_{L^+G}^b(\Gr,\L), \quad D_{\IW}^b(\Gr,\bk) \curvearrowleft D_{L^+G}^b(\Gr,\bk),$$
$$D_{\IW,\Gm}^b(\Gr,\bk) \curvearrowleft D_{L^+G}^b(\Gr,\bk), \quad D_{\IW,\Gm}^b(\mathscr{X},\bk) \curvearrowleft D_{I \rtimes \Gm}^b(\mathscr{X},\bk),$$
where the symbol $\curvearrowleft$ indicates the second category is right-acting on the first category via convolution.



\subsection{Parity sheaves} \label{s:par}
Again let $X \subseteq \mathscr{X}$ be a locally closed finite union of $I^+$-orbits. Using forgetful functors to $D_c^b(X,\L)$, we have a notion of \textit{parity complexes} in $D_{I \rtimes \mathbb{G}_m}^b(X,\L)$ and $D^b_{\IW, \mathbb{G}_m}(X,\bk)$; the main reference for these is \cite{jmw}. In the direct limit, we obtain full subcategories
$$\Parity_{I \rtimes \mathbb{G}_m}(\mathscr{X},\L) \subseteq D^b_{I \rtimes \mathbb{G}_m}(\mathscr{X},\L),$$
$$\Parity_{\IW}(\mathscr{X},\bk) \subseteq D^b_{\IW}(\mathscr{X},\bk), \quad \Parity_{\IW, \mathbb{G}_m}(\mathscr{X},\bk) \subseteq D^b_{\IW, \mathbb{G}_m}(\mathscr{X},\bk).$$
Suppose now that $\mathscr{X}$ has a Bruhat-like decomposition
\begin{equation} \label{bruhat}
\mathscr{X} = \bigsqcup_{\alpha \in A} \mathscr{X}_\alpha,
\end{equation}
where the $\mathscr{X}_\alpha$ are $I$-orbits of dimensions $d_\alpha$. For reasons articulated in \cite[\textsection 5.3]{st} and building on the general theory of \cite{jmw}, there is (up to isomorphism and shift) at most one indecomposable parity sheaf
$$\mathscr{E}^\mathscr{X}(\alpha) \in \Parity_{I \rtimes \Gm}(\mathscr{X},\L)$$
which is supported on $\overline{\mathscr{X}_\alpha}$ and whose restriction to $\mathscr{X}_\alpha$ is $\underline{\L}_{\mathscr{X}_\alpha}[d_\alpha]$. In the cases where $\mathscr{X}$ is $\Fl$, resp. $\Fl^s$, we have $A = W$, resp. $A = W^s$, and we write $\mathscr{E}^\Fl(w) = \mathscr{E}(w)$, resp. $\mathscr{E}^{\Fl^s}(w) = \mathscr{E}^s(w)$; sometimes a subscript $\L$ will be included to emphasise the ground ring.

On the other hand, suppose $\mathscr{X}$ admits a similar decomposition
\begin{equation} \label{bruhat2}
\mathscr{X} = \bigsqcup_{\alpha \in A^+} \mathscr{X}_\alpha^+,
\end{equation}
where the $\mathscr{X}_\alpha^+$ are $I^+$-orbits of dimensions $d_\alpha^+$. Write $A_{+} \subseteq A^+$ for the subset parametrising the orbits $\mathscr{X}_\alpha$ that support a non-zero Iwahori--Whittaker local system $\mathscr{L}_{\text{AS}}^{\mathscr{X}}(\alpha) \in D_{\IW,\mathbb{G}_m}^b(\mathscr{X}_\alpha^+,\bk)$, which is necessarily unique up to isomorphism. In a similar fashion to the previous case, there is (up to isomorphism and shift) at most one parity sheaf $$\mathscr{E}_{\IW,\mathbb{G}_m}^{\mathscr{X}}(\alpha) \in \Parity_{\IW, \mathbb{G}_m}(\mathscr{X},\bk), \quad \text{resp.} \quad \mathscr{E}_{\IW}^{\mathscr{X}}(\alpha) \in \Parity_{\IW}(\mathscr{X},\bk),$$ supported on $\overline{\mathscr{X}_\alpha^+}$ and extending $\mathscr{L}_{\text{AS}}[d_\alpha^+]$, for each $\alpha \in A_+$. Moreover, the former of these is sent to the latter under the forgetful functor
$$\Parity_{\IW,\mathbb{G}_m}(\mathscr{X},\bk) \to \Parity_{\IW}(\mathscr{X},\bk).$$
The following will be our main examples:
\begin{enumerate}
    \item $\mathscr{X} = \Gr$, where $(A_+,A^+) = (\textbf{X}_{+\!+}^\vee, \textbf{X}^\vee)$.
    \item $\mathscr{X} = \Fl$, where $(A_+,A^+) = (^{\text{f}}W, W)$ and we write 
    $$\mathscr{E}_{\IW, \mathbb{G}_m}(w) := \mathscr{E}_{\IW, \mathbb{G}_m}^\Fl(w), \quad \mathscr{E}_{\IW}(w) := \mathscr{E}_{\IW}^\Fl(w).$$
    \item $\mathscr{X} = \Fl^s$, where $(A_+,A^+) = (^{\text{f}}W^s, W^s)$ for
    $$W^s = \{ w \in W: w < ws \}, \quad {^{\text{f}}}W^s = \{ w \in W^s \cap {^{\text{f}}}W: ws \in {^{\text{f}}}W \},$$
    and we write 
    $$\mathscr{E}_{\IW, \mathbb{G}_m}^s(w) := \mathscr{E}_{\IW, \mathbb{G}_m}^{\Fl^s}(w), \quad \mathscr{E}_{\IW}^s(w) := \mathscr{E}_{\IW}^{\Fl^s}(w).$$
\end{enumerate}

\subsection{Highest weight objects and averaging}
Assume again that $\mathscr{X}$ has a decomposition $\eqref{bruhat2}$, with affine embeddings
$$j_\alpha: \mathscr{X}_\alpha \hookrightarrow \mathscr{X}.$$
Then for $\alpha \in A^+$ we have \textit{standard} and \textit{costandard} objects
$$\Delta_{\IW,\mathbb{G}_m}^\mathscr{X}(\alpha) = (j_\alpha)_! \mathscr{L}_{\text{AS}}^\mathscr{X}(\alpha)[d_\alpha], \quad \nabla_{\IW,\mathbb{G}_m}^\mathscr{X}(\alpha) = (j_\alpha)_* \mathscr{L}_{\text{AS}}^\mathscr{X}(\alpha)[d_\alpha]$$
in $D_{\IW,\mathbb{G}_m}^b(\mathscr{X},\bk)$, whose images under the forgetful functor to $D_{\IW}^b(\mathscr{X},\bk)$ are written $\Delta_{\IW}^\mathscr{X}(\alpha)$, $\nabla_{\IW}^\mathscr{X}(\alpha)$, respectively. We adopt notational abbreviations as in Section \ref{s:par} for the cases where $\mathscr{X}$ is a partial affine flag variety, writing e.g. 
$$\Delta_{\IW}(\alpha) = \Delta_{\IW}^\Fl(\alpha) \quad \text{and} \quad \Delta_{\IW}^s(\alpha) = \Delta_{\IW}^{\Fl_s}(\alpha).$$
For $\mathscr{X} = \Fl$, we have an \textit{averaging functor}
$$\mathsf{Av}: D_{I \rtimes \mathbb{G}_m}^b(\Fl,\bk) \to D_{\IW,\mathbb{G}_m}^b(\Fl,\bk), \quad F \mapsto \Delta_{\IW,\Gm}(1) \star F;$$
this definition agrees with \eqref{avg} on locally closed finite unions of orbits.

\subsection{Extension of scalars} \label{s:ext}
Recall from \cite[\textsection 10.2]{rw} that any extension $\mathbb{L} \to \mathbb{L}'$ of coefficient rings induces a monoidal \textit{extension of scalars} functor
$$\mathbb{L}'(-) = \mathbb{L}' \otimes_\mathbb{L} (-): D_{I \rtimes \Gm}^b(\mathscr{X},\mathbb{L}) \to D_{I \rtimes \Gm}^b(\mathscr{X},\mathbb{L}')$$
which preserves parity subcategories; see \cite[Lemma 2.36]{jmw}. Extension of scalars also affords isomorphisms
\begin{equation} \label{extsc}
\mathbb{L}' \otimes_{\mathbb{L}} \Hom_{\Parity_{I \rtimes \Gm}(\mathscr{X},\mathbb{L})}(\mathscr{E},\mathscr{F}) \overset{\cong}{\to} \Hom_{\Parity_{I \rtimes \Gm}(\mathscr{X},\mathbb{L}')}(\mathbb{L}'(\mathscr{E}),\mathbb{L}'(\mathscr{F})),
\end{equation}
and $\mathbb{L}'(-)$ is compatible with pushforward and pullback along morphisms such as $q^s$. In the case of $\O \to \K$, we deduce an $\O$-module injection
$$\Hom_{\Parity_{I \rtimes \Gm}(\mathscr{X},\mathbb{O})}(\mathscr{E},\mathscr{F}) \hookrightarrow \Hom_{\Parity_{I \rtimes \Gm}(\mathscr{X},\mathbb{K})}(\K(\mathscr{E}),\K(\mathscr{F})),$$
since the source is torsion free over $\O$; see \cite[Lemma 2.2(2)]{mr}. In the case of $\mathbb{O} \to \bk$, it holds that $\bk(\mathscr{E})$ is a parity sheaf if and only if $\mathscr{E}$ is; moreover, by \cite[Prop. 2.39]{jmw}, extension to $\bk$ respects indecomposable parity objects: 
\begin{equation} \label{bc}
\bk(\mathscr{E}_{\mathbb{O}}^{\mathscr{X}}(\alpha)) \cong \mathscr{E}_{\mathbb{O}}^{\mathscr{X}}(\alpha).
\end{equation}

\subsection{Geometric Satake equivalence}
We briefly summarise the contents of \cite[8.1]{st}, for the sake of fixing notation and recalling key results. Denote by $$\Perv(\Gr,\bk) \subseteq D_{L^+G}^b(\Gr,\bk)$$ the \textit{Satake category} of $L^+ G$-equivariant perverse sheaves on $\Gr$; this category inherits an exact monoidal convolution product $\star$ from $D_{L^+G}^b(\Gr,\bk)$, while its simple objects $\IC(\lambda)$ are parametrised by $\lambda \in \textbf{X}_+^\vee$. Our main application for perverse sheaves derives from the following theorem, due to Mirkovi\'c--Vilonen \cite{mv07}.
\begin{thm}
There is an equivalence of monoidal categories,
$$\Satake: (\Rep(\textbf{G}),\otimes) \cong (\Perv(\Gr,\bk),\star).$$
\end{thm}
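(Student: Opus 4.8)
The statement is the (modular) geometric Satake equivalence, and the plan is to follow the argument of Mirkovi\'c--Vilonen \cite{mv07}: exhibit $(\Perv(\Gr,\bk),\star)$ as a neutral Tannakian category over $\bk$, reconstruct the associated affine group scheme, and identify it with $\textbf{G}$.

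The first step is to check that convolution is exact on $\Perv(\Gr,\bk)$. The essential geometric input is that the convolution morphism $m\colon\Gr\ttimes\Gr\to\Gr$ is stratified semismall for the $L^+G$-orbit stratification; this follows from the dimension formula $\dim\Gr^\lambda=\langle 2\rho,\lambda\rangle$ (with $\rho$ the half-sum of the positive roots of $G$) together with a direct estimate on the fibres of $m$ over a point of $\Gr^\lambda$. Semismallness forces $m_*$ of a perverse sheaf to be perverse, so $\star$ preserves $\Perv(\Gr,\bk)$, and associativity is then formal from iterated convolution diagrams and the projection formula.

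Next I would introduce the fibre functor $\coH^*(\Gr,-)$, valued in the category of finite-dimensional $\bk$-vector spaces, and verify its key properties. Exactness is the delicate point: one decomposes $\coH^*(\Gr,-)\cong\bigoplus_{\mu\in\textbf{X}^\vee}\coH^*_c(S_\mu,(-)|_{S_\mu})$, where $S_\mu$ is the orbit of $L_\mu$ under a maximal unipotent subgroup of $LG$, and uses sharp bounds of the form $\dim(S_\mu\cap\Gr^\lambda)\le\langle\rho,\lambda+\mu\rangle$, together with a cohomological amplitude argument, to show that each weight functor $\coH^*_c(S_\mu,-)$ is concentrated in a single cohomological degree (depending linearly on $\mu$) on perverse sheaves, hence exact; faithfulness is immediate since the $S_\mu$ cover $\Gr$. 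The convolution diagram and the K\"unneth formula then supply a natural isomorphism $\coH^*(\Gr,E\star F)\cong\coH^*(\Gr,E)\otimes_\bk\coH^*(\Gr,F)$, so $\coH^*$ is monoidal.

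It remains to make $(\Perv(\Gr,\bk),\star)$ symmetric monoidal and run the Tannakian machine. For the commutativity constraint one uses fusion: the Beilinson--Drinfeld Grassmannian over $\bA^1$ degenerates the two-point global convolution to $\star$ and restricts to an external product over the generic locus, and the $\fS_2$-symmetry of the two-point configuration space produces a commutativity constraint compatible with $\coH^*$. Then $(\Perv(\Gr,\bk),\star,\coH^*)$ is neutral Tannakian, and reconstruction gives an affine $\bk$-group scheme $\tilde G=\underline{\mathrm{Aut}}^{\otimes}(\coH^*)$ with $\Perv(\Gr,\bk)\cong\Rep(\tilde G)$ as tensor categories. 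To identify $\tilde G$ with $\textbf{G}$, one reads off from the weight decomposition a maximal torus $\tilde T\subseteq\tilde G$ with $X^*(\tilde T)=\textbf{X}^\vee$; connectedness of $\tilde G$ (the $\IC(\lambda)$ generate under $\star$) together with rank-one reductions to $G=\mathrm{PGL}_2$, where $\Gr$ and its $\IC$-sheaves are computed explicitly and exhibit an $\mathrm{SL}_2$, determine the coroots of $\tilde G$, so that its root datum is dual to that of $G$; carrying out the construction over $\Z$ (using that $\IC$-stalks, hence $\coH^*$, are compatible with change of coefficients) then identifies $\tilde G$ with the split Chevalley group scheme, i.e.\ $\tilde G\cong\textbf{G}$, and tracking $\IC(\lambda)$ through the equivalence shows it corresponds to the module of highest weight $\lambda$. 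The main obstacle throughout is the exactness of the weight functors in the modular setting --- which rests on the sharp estimates for $S_\mu\cap\Gr^\lambda$ and the combinatorics of MV cycles --- together with the related point that $\tilde G$ is reductive despite $\Perv(\Gr,\bk)$ failing to be semisimple, which is forced by the behaviour of standard and costandard objects and by descent to the integral form.
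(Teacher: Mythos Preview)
The paper does not prove this theorem: it is stated as a known result and attributed to Mirkovi\'c--Vilonen \cite{mv07}, with no argument given. Your proposal is a faithful and essentially correct outline of the Mirkovi\'c--Vilonen proof (semismallness for exactness of $\star$, weight functors along semi-infinite orbits for an exact faithful fibre functor, fusion via the Beilinson--Drinfeld Grassmannian for the commutativity constraint, then Tannakian reconstruction and identification of the root datum), so there is nothing to compare beyond noting that the paper simply quotes the result.
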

In fact, even more important for us will be a formulation of this theorem featuring the Iwahori--Whittaker derived category; this is made possible by the following result of \cite{bgmrr}.
\begin{thm}
There is an equivalence of abelian categories,
$$\Perv(\Gr,\bk) \cong \Perv_{\IW}(\Gr,\bk), \quad \mathscr{F} \mapsto \Delta_\IW^\Gr(\rho^\vee) \star \mathscr{F}.$$
\end{thm}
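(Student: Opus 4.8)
The plan is to recognise this functor as the Iwahori--Whittaker averaging functor $\mathsf{A} := \Delta_\IW^\Gr(\rho^\vee)\star(-)$, to prove it is t-exact for the perverse t-structures, and then to upgrade it to an equivalence of abelian categories using the highest weight structures on both sides. That $\mathsf{A}$ carries $D_{L^+G}^b(\Gr,\bk)$ into $D_\IW^b(\Gr,\bk)$ is immediate from the construction of convolution in \S\ref{s:conv}, since left convolution by an Iwahori--Whittaker object produces an Iwahori--Whittaker object. So the first real task is \textbf{t-exactness}: that $\mathsf{A}$ carries $\Perv(\Gr,\bk)$ into $\Perv_\IW(\Gr,\bk)$.

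The key computation, and the only step requiring genuine geometric input, is the pair of isomorphisms
$$\mathsf{A}\bigl(\Delta^{\mathrm{Sat}}(\lambda)\bigr) \cong \Delta_\IW^\Gr(\lambda+\rho^\vee), \qquad \mathsf{A}\bigl(\nabla^{\mathrm{Sat}}(\lambda)\bigr) \cong \nabla_\IW^\Gr(\lambda+\rho^\vee) \qquad (\lambda\in\textbf{X}_+^\vee),$$
where $\Delta^{\mathrm{Sat}}(\lambda)$, $\nabla^{\mathrm{Sat}}(\lambda)$ are the standard and costandard objects of the Satake category (corresponding under $\Satake$ to Weyl and dual Weyl modules). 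This is a length-additivity statement for convolution with the standard object attached to $\rho^\vee$, proved by a stratified analysis of the convolution morphism over the $I^+$-orbit through $L_{\rho^\vee}$; it is precisely here that one uses that $\rho^\vee$ is minimal among strictly dominant coweights. Granting it, $\mathsf{A}$ (being triangulated) sends an object with a $\Delta^{\mathrm{Sat}}$-filtration, resp.\ $\nabla^{\mathrm{Sat}}$-filtration, to one with a $\Delta_\IW^\Gr$-filtration, resp.\ $\nabla_\IW^\Gr$-filtration --- in each case perverse. Any $\mathscr{F}\in\Perv(\Gr,\bk)$ is supported in some $\overline{\Gr^\lambda}$, and the Serre subcategory of objects supported there is a finite highest weight category; so $\mathscr{F}$ has a finite projective resolution by $\Delta^{\mathrm{Sat}}$-filtered objects and a finite injective coresolution by $\nabla^{\mathrm{Sat}}$-filtered objects. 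Applying $\mathsf{A}$ realises $\mathsf{A}(\mathscr{F})$ as a bounded complex of perverse sheaves concentrated in non-positive, resp.\ non-negative, cohomological degrees; hence $\mathsf{A}(\mathscr{F}) \in {}^pD^{\le 0}\cap{}^pD^{\ge 0} = \Perv_\IW(\Gr,\bk)$.

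With $\mathsf{A}$ exact, applying it to the canonical maps $\Delta^{\mathrm{Sat}}(\lambda)\twoheadrightarrow\IC(\lambda)\hookrightarrow\nabla^{\mathrm{Sat}}(\lambda)$ exhibits $\mathsf{A}(\IC(\lambda))$ as a perverse sheaf that is a quotient of $\Delta_\IW^\Gr(\lambda+\rho^\vee)$ and a subobject of $\nabla_\IW^\Gr(\lambda+\rho^\vee)$; it is therefore supported on $\overline{\Gr_{\lambda+\rho^\vee}^+}$, restricts to the rank-one Whittaker local system on the open stratum, and has no subobject or quotient supported on the boundary, so $\mathsf{A}(\IC(\lambda)) \cong \IC_\IW^\Gr(\lambda+\rho^\vee)$. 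Since $\lambda\mapsto\lambda+\rho^\vee$ is a bijection $\textbf{X}_+^\vee\xrightarrow{\sim}\textbf{X}_{+\!+}^\vee$ and the latter indexes the simples of $\Perv_\IW(\Gr,\bk)$ (cf.\ \S\ref{s:par}), $\mathsf{A}$ induces a bijection on isomorphism classes of simple objects, and in particular is faithful. To conclude that $\mathsf{A}$ is an equivalence I would then invoke a standard argument for highest weight categories: $\mathsf{A}$ is exact, faithful, matches standard with standard and costandard with costandard compatibly with the bijection of weight posets, so it suffices to check that it is fully faithful on tilting objects (equivalently, to produce a t-exact quasi-inverse by a de-averaging construction and verify the unit and counit are isomorphisms on simple objects, extending by a five-lemma d\'evissage on Jordan--H\"older length). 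The step I expect to be the main obstacle is the displayed length-additivity identity for $\mathsf{A}(\Delta^{\mathrm{Sat}}(\lambda))$ and $\mathsf{A}(\nabla^{\mathrm{Sat}}(\lambda))$, underlying t-exactness: it is the sole place where the geometry of the $I^+$-orbits on $\Gr$ and of the convolution morphism enters, everything downstream being essentially formal. The full argument is carried out in \cite{bgmrr}; see also \cite[\S 8.1]{st}.
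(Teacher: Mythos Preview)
The paper does not prove this theorem: it is stated as a result of \cite{bgmrr} and simply cited. Your proposal is therefore consistent with the paper's treatment and in fact goes further, by sketching the shape of the argument before deferring to the same reference at the end.

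Your outline is broadly accurate as a summary of the strategy in \cite{bgmrr}: the identifications $\mathsf{A}(\Delta^{\mathrm{Sat}}(\lambda))\cong\Delta_\IW^\Gr(\lambda+\rho^\vee)$ and $\mathsf{A}(\nabla^{\mathrm{Sat}}(\lambda))\cong\nabla_\IW^\Gr(\lambda+\rho^\vee)$ are indeed the geometric heart of the matter, and t-exactness plus the bijection on simples follow as you indicate. The one place where your sketch is a little loose is the passage from ``exact, sends (co)standards to (co)standards, bijective on simples'' to ``equivalence''; you gesture at two different completions (full faithfulness on tiltings, or a de-averaging quasi-inverse) without committing to either. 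In \cite{bgmrr} this is handled by showing that the functor induces isomorphisms on $\Hom(\Delta,\nabla)$ and on higher $\Ext$'s between (co)standards, which for highest weight categories suffices; since you explicitly flag this as the step you are leaving to the reference, there is no real gap.
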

It will also be important that the forgetful functor 
$$\For_{\Gm}: \Perv_{\IW,\Gm}(\Gr,\bk) \to \Perv_\IW(\Gr,\bk)$$ is an equivalence of categories; this is shown in \cite[Lemma 5.2]{st}.

Since $\Perv(\Gr,\bk)$ has the structure of a highest weight category, we may refer to its tilting subcategory $\Tilt(\Gr,\bk) \subseteq \Perv(\Gr,\bk)$, whose indecomposable tilting objects $\mathscr{T}(x)$ are indexed by $x \in \textbf{X}_+^\vee$. We likewise have $$\Tilt_{\IW}(\Gr,\bk) \subseteq \Perv_{\IW}(\Gr,\bk), \quad \Tilt_{\IW,\Gm}(\Gr,\bk) \subseteq \Perv_{\IW}(\Gr,\bk)$$ 
whose indecomposable tilting objects are indexed by $x \in \textbf{X}_{+\!+}^\vee$ and written $\mathscr{T}_{\IW}(x)$ and $\mathscr{T}_{\IW,\Gm}(x)$, respectively.

\section{The loop antispherical module} \label{sec:asm}
\subsection{Bott--Samelson varieties}
Let $s \in S$ be a simple affine reflection and recall from \textsection \ref{s:flag} the positive loop group of the parahoric group scheme associated to $s$: 
$$\mathcal{P}^s = L^+ P_{\textbf{f}_s}.$$ 
For any expression $\underline{w} = (s_1, s_2, \dots, s_m)$ in $S$, there is a \textit{Bott--Samelson variety} 
$$\nu_{\underline{w}}: \BSvar(\underline{w}) = \mathcal{P}^{s_1} \times^{I} \mathcal{P}^{s_2} \times^{I} \cdots \times^{I} \mathcal{P}^{s_k}/I \to \Fl.$$
As in \cite[\textsection 9.1]{rw}, the morphism $\nu_{\underline{w}}$ is equivariant for the natural $I$-action on its source and target; since it arises from the multiplication
$$\mathcal{P}^{s_1} \times \cdots \times \mathcal{P}^{s_k} \to LG,$$
$\nu_{\underline{w}}$ is also $\mathbb{G}_m$-equivariant, and thus equivariant for the action of $I \rtimes \mathbb{G}_m$. Moreover, $\BSvar(\underline{w})$ is a smooth projective variety, so $\nu_{\underline{w}}$ is quasi-separated; hence, $(\nu_{\underline{w}})_*$ preserves $I \rtimes \mathbb{G}_m$-equivariant sheaves under pushforward. Consider then the object
$$\mathscr{E}(\underline{w}) = (\nu_{\underline{w}})_* \underline{\L}_{\BSvar(\underline{w})}[\ell(\underline{w})] \in D_{I \rtimes \mathbb{G}_m}^b(\Fl,\L).$$
An easy adaption of the proofs of \cite[Lemma 10.2]{rw} shows there are canonical isomorphisms
\begin{equation} \label{concat}
\mathscr{E}(\underline{w}) \star^{I \rtimes \mathbb{G}_m} \mathscr{E}(\underline{v}) \cong \mathscr{E}(\underline{wv}).
\end{equation}
The smoothness of $\mathcal{P}^s/I$ implies $\mathscr{E}(s)$ is a parity object, and convolution by $\mathscr{E}(s)$ preserves parity objects for the reasons given in \cite[\textsection 9.4]{rw}, so due to \eqref{concat} all the $\mathscr{E}(\underline{w})$ are parity objects.
\begin{defn}
We consider a category $\Parity_{I \rtimes \mathbb{G}_m}^{\BSvar}(\Fl,\L)$ whose objects are pairs $(\underline{w},m)$ for $\underline{w}$ an expression and $m$ an integer, and whose morphism spaces
are 
$$\text{Hom}((\underline{w},m),(\underline{v},n)) = \text{Hom}(\mathscr{E}(\underline{w})[m],\mathscr{E}(\underline{v})[n]).$$
It is monoidal with $(\underline{w},m) \star (\underline{v},n) = (\underline{wv}, m + n)$ on objects, and the product of morphisms defined through (\ref{concat}).
\end{defn}

As in \cite[Lemma 10.4]{rw}, the natural functor
$$\Parity_{I \rtimes \mathbb{G}_m}^\BSvar(\Fl,\L) \to \Parity_{I \rtimes \mathbb{G}_m}(\Fl,\L)$$
realises $\Parity_{I \rtimes \mathbb{G}_m}(\Fl,\L)$ as the Karoubi envelope of the additive hull of $\Parity_{I \rtimes \mathbb{G}_m}^\BSvar(\Fl,\L)$. Note also that $\Parity_{I \rtimes \mathbb{G}_m}(\Fl,\L)$ is Krull--Schmidt, as a full subcategory of the Krull--Schmidt category $D_{I \rtimes \mathbb{G}_m}^b(\Fl,\L)$; see \cite[Remark 2.1]{jmw} and \cite{lc}. 

\subsection{Realisations and Bott--Samelson Hecke categories}
We begin by describing two realisations of the Coxeter system $(W,S)$, in the sense of \cite[\textsection 3.1]{ew}. For simplicity, we assume $\Phi$ is irreducible; the discussion can be made fully general by considering components in turn.

Consider first the $\L$-module
$$\mathfrak{h}_\L = \L \otimes_{\mathbb{Z}} \mathfrak{h}_{\mathbb{Z}}, \quad \text{where} \quad \mathfrak{h}_{\mathbb{Z}} = \mathbb{Z} \Phi^\vee \oplus \mathbb{Z} d.$$
In $\mathfrak{h}_\L$, we have the simple coroots
$\alpha_1^\vee, \dots, \alpha_l^\vee \in \Phi^\vee,$
along with $\alpha_0^\vee = -\theta^\vee$, where $\theta$ is the highest root in $\Phi_+$; in $\mathfrak{h}_\L^*$, there are simple roots $\alpha_1, \dots, \alpha_l$,
along with $\alpha_0 = \delta - \theta$, where $\delta$ is the projection onto $\mathbb{Z}d$. We define reflections 
$$\sigma_{i}: \mathfrak{h}_\L \to \mathfrak{h}_\L, \quad \sigma_{i}(v) = v - \langle v, \alpha_i \rangle \alpha_i^\vee,$$
for $0 \le i \le \ell$; note here that $\langle d, \alpha_i \rangle = \delta_{0i}$. Now, the data $(\mathfrak{h}_\L, \{ \alpha_i^\vee \}_{0 \le i \le l}, \{ \alpha_i \}_{0 \le i \le l})$, together with the assignment of the simple reflections in $W$ to the $\sigma_i$, define the \textit{loop realisation} of $(W,S)$. In fact, this realisation is a quotient of the ``traditional'' realisation associated to the affinisation of the Lie algebra of $G$, discussed in \cite[Remark 10.17(2)]{rw}.

Secondly, consider the realisation of $(W,S)$ described in \cite[\textsection 4.2]{rw} (for the dual root system); there it is denoted $\mathfrak{h}$, but we will denote it $\mathfrak{h}_\L'$ and call it the \textit{standard realisation}. We have
$$\mathfrak{h}_\L' = \L \otimes_{\mathbb{Z}} \mathbb{Z} \Phi^\vee \hookrightarrow \mathfrak{h}_\L,$$
with the same simple coroots and simple roots as $\mathfrak{h}_\L$, except with $\alpha_0' = -\theta$ in place of $\alpha_0$. Clearly the standard and loop realisations have the same Cartan matrix; hence by \cite[\textsection 4.2]{rw}, both realisations satisfy the technical conditions in \cite[Definition 3.6]{ew} and \cite[Assumption 3.7]{ew} if $\ell > h$.

Now let $$R_\L' = \text{Sym}((\mathfrak{h}_\L')^*), \quad R_\L = \text{Sym}(\mathfrak{h}_\L^*) = R_\L'[d],$$ 
which are generated as $\L$-algebras by the simple roots in $\mathfrak{h}_\L$ and $\mathfrak{h}_\L'$, respectively. The algebra $R_\L'$, respectively $R_\L$, is essential to define the graded $\bk$-linear monoidal \textit{Bott--Samelson Hecke category} $\mathscr{H}_{\BSvar}'$, respectively $\mathscr{H}_{\BSvar}$, associated to the standard realisation, respectively the loop realisation. For these definitions we refer the reader to \cite[Definition 5.2]{ew}, which features the notation $\mathcal{D}$ (suppressing mention of the realisation). The standard generating objects, shift of grading, and monoidal product in Hecke categories will be denoted by the symbols $B_s$, $\langle 1 \rangle$, and $\star$, respectively.

\subsection{First equivalences} \label{s:first}
It is proven in \cite[\textsection 10]{rw} that there is an equivalence of $\L$-linear graded monoidal categories,
$$\mathscr{H}_{\BSvar}' \cong \Parity_{I}^{\BSvar}(\Fl,\L),$$
lifting functorially to an equivalence on the Karoubi envelope of the additive hulls,
$$\mathscr{H}' \cong \Parity_{I}(\Fl,\L).$$
The goal of this section is to adapt these equivalences to the $\mathbb{G}_m$-equivariant setting.
\begin{thm} \label{firsteq}
We have an equivalence of $\L$-linear graded monoidal categories,
$$\Delta_{\BSvar}: \mathscr{H}_{\BSvar} \cong \Parity_{I \rtimes \mathbb{G}_m}^{\BSvar}(\Fl,\L),$$
lifting to an equivalence
$$\Delta: \mathscr{H} \cong \Parity_{I \rtimes \mathbb{G}_m}(\Fl,\L).$$
\end{thm}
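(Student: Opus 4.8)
The plan is to bootstrap from the non-equivariant equivalence $\mathscr{H}_{\BSvar}' \cong \Parity_I^{\BSvar}(\Fl,\L)$ established in \cite[\S 10]{rw}, replacing the standard realisation $\mathfrak{h}_\L'$ with the loop realisation $\mathfrak{h}_\L = \mathfrak{h}_\L' \oplus \L d$ and simultaneously adding $\mathbb{G}_m$-equivariance on the geometric side. Concretely, I would recall that the generators-and-relations description of $\mathscr{H}_{\BSvar}$ from \cite[Definition 5.2]{ew} produces, for each pair of expressions $(\underline{w},\underline{v})$, an explicit $R_\L$-bimodule of morphisms, with generating morphisms the dot, the trivalent vertex, the $2m_{st}$-valent vertices, and the polynomial (``box'') morphisms labelled by elements of $R_\L = R_\L'[d]$. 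On the geometric side, the object $\mathscr{E}(\underline{w}) = (\nu_{\underline{w}})_* \underline{\L}_{\BSvar(\underline{w})}[\ell(\underline{w})]$ is already defined $I \rtimes \mathbb{G}_m$-equivariantly in \S\ref{sec:asm}, and concatenation \eqref{concat} holds equivariantly; so the target category $\Parity_{I \rtimes \mathbb{G}_m}^{\BSvar}(\Fl,\L)$ makes sense and the functor $\Delta_{\BSvar}$ should be \emph{defined} to agree with the Riche--Williamson functor on the generating morphisms of $\mathscr{H}_{\BSvar}'$, with the extra generator $d \in R_\L$ sent to the (degree-two) equivariant parameter coming from the $\mathbb{G}_m$-equivariant structure on $\underline{\L}_{\Fl}$ — that is, the image of $d$ under $H^\bullet_{\mathbb{G}_m}(\pt,\L) \to \End(\mathscr{E}(\emptyset))$.

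The verification that $\Delta_{\BSvar}$ is well defined amounts to checking the Elias--Williamson relations, and here the key input is a clean description of the $\mathbb{G}_m$-equivariant endomorphism ring of a point and of $\Fl$. The essential point is that forgetting $\mathbb{G}_m$-equivariance is faithful on parity objects and that the equivariant cohomology decomposes: $H^\bullet_{I \rtimes \mathbb{G}_m}(\pt,\L) \cong H^\bullet_I(\pt,\L) \otimes_\L \L[d]$, mirroring $R_\L = R_\L'[d]$. This lets me reduce each relation in $\mathscr{H}_{\BSvar}$ either to the corresponding relation in $\mathscr{H}_{\BSvar}'$ (already checked in \cite{rw}) or to a statement purely about the polynomial generator $d$, which commutes with everything in sight because the $\mathbb{G}_m$-action is by global automorphisms and the Bott--Samelson maps $\nu_{\underline{w}}$ are $\mathbb{G}_m$-equivariant. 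Faithfulness and essential surjectivity of $\Delta_{\BSvar}$ onto $\Parity_{I \rtimes \mathbb{G}_m}^{\BSvar}(\Fl,\L)$ then follow from the non-equivariant statement together with a comparison of Hom-spaces: both sides are free $R_\L = R_\L'[d]$-modules whose base change along $d \mapsto 0$ (or equivalently forgetting equivariance, after inverting/specialising appropriately) recovers the non-equivariant Hom-spaces, so a morphism of graded $R_\L$-modules that becomes an isomorphism after this specialisation and whose source and target are finite free is itself an isomorphism — this is the usual ``equivariant formality''-style argument, and the $\L$-freeness is guaranteed because the strata $\Fl_x$ and the Bott--Samelson resolutions have the relevant cohomological purity (cf.\ \cite{jmw}).

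To pass from $\Delta_{\BSvar}$ to $\Delta$, I would invoke the fact recalled just before the theorem: $\Parity_{I \rtimes \mathbb{G}_m}(\Fl,\L)$ is the Karoubi envelope of the additive hull of $\Parity_{I \rtimes \mathbb{G}_m}^{\BSvar}(\Fl,\L)$ (this is the equivariant analogue of \cite[Lemma 10.4]{rw}, and it holds because $\Parity_{I \rtimes \mathbb{G}_m}(\Fl,\L)$ is Krull--Schmidt and every indecomposable parity object is a summand of some $\mathscr{E}(\underline{w})$), while $\mathscr{H}$ is by definition the Karoubi envelope of the additive hull of $\mathscr{H}_{\BSvar}$. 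An equivalence of additive monoidal categories extends uniquely (up to monoidal natural isomorphism) to their additive-hull Karoubi envelopes, and this extension is automatically monoidal; this yields $\Delta$. The main obstacle I anticipate is not conceptual but bookkeeping: one must be careful that the degree conventions and the choice of equivariant parameter $d$ are normalised so that $\Delta_{\BSvar}$ is \emph{graded} and \emph{monoidal} on the nose — in particular that $d$ has internal degree $2$, that the isomorphisms \eqref{concat} are compatible with the monoidal structure constraints of $\mathscr{H}_{\BSvar}$, and that the sign/orientation conventions in \cite[Definition 5.2]{ew} match those implicit in the Bott--Samelson pushforwards. Checking the Zamolodchikov ($2m_{st}$-valent) relations in the loop realisation, where one must genuinely use that the loop realisation is a quotient of the traditional affine realisation (\cite[Remark 10.17(2)]{rw}) and hence still satisfies \cite[Assumption 3.7]{ew} for $\ell > h$, is the one place where something could in principle go wrong, so I would treat that case with the most care.
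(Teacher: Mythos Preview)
Your outline is the same as the paper's --- rerun \cite[\S\S 10.3--10.6]{rw} with $I$ replaced by $I \rtimes \mathbb{G}_m$ and the standard realisation by the loop realisation --- and your identification of the extra polynomial generator with the $\mathbb{G}_m$-equivariant parameter, together with the Karoubi-envelope passage at the end, is exactly what the paper does. The paper likewise isolates the few places where the extra $\mathbb{G}_m$ changes something: the computation $\End^\bullet(\mathscr{E}(\varnothing)) \cong H^\bullet_{T \times \mathbb{G}_m}(\pt,\L) \cong R_\L$, the choice of a trivialisation of $H^2(\mathbb{P}^1)$ in the \'etale context, and the one-dimensionality lemma \cite[Lemma 10.9]{rw} that pins down the $2m_{st}$-valent morphisms.

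There is, however, a real gap in your relation-checking step. The claim that ``forgetting $\mathbb{G}_m$-equivariance is faithful on parity objects'' is false: already for $\mathscr{E}(\varnothing)$ one has $\Hom^2_{I \rtimes \mathbb{G}_m}(\mathscr{E}(\varnothing),\mathscr{E}(\varnothing)) = (\mathfrak{h}_\L')^* \oplus \L\delta$ while $\Hom^2_I = (\mathfrak{h}_\L')^*$, and forgetting kills $\delta$. So knowing a relation after forgetting does not give it equivariantly, and indeed the barbell relation for the affine simple reflection is genuinely different in the two settings ($\alpha_0 = \delta - \theta$ versus $\alpha_0' = -\theta$). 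The correct tool, which the paper invokes via \cite[Lemma 2.2]{mr}, runs in the \emph{opposite} direction: $\Hom^\bullet_{D^b_{I \rtimes \mathbb{G}_m}} \cong R_\L \otimes_{R_\L'} \Hom^\bullet_{D^b_I}$. This is what lets one transport the rank-one statement of \cite[Lemma 10.9]{rw} to the equivariant setting, after which the harder relations follow by the same ``both sides live in a one-dimensional space'' argument. Your Nakayama-style argument for full faithfulness is fine and is essentially a repackaging of this same freeness input (together with the double-leaves freeness of the diagrammatic Homs over $R_\L$); it is only the well-definedness step that needs the correction.
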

\begin{proof}
The proof is almost identical to that given in \cite[\textsection \textsection 10.3--10.6]{rw} for \cite[Thm. 10.6]{rw}, so for efficiency we merely annotate the meaningful points of difference in specific sections of that paper.

In modifying the proof of \cite{rw}, we must universally replace the ind-varieties $\mathscr{X}$ and $\mathscr{X}^s$ by $\Fl$ and $\Fl^s$, respectively, and the Borel subgroup $\mathscr{B}$ by $I \rtimes \mathbb{G}_m$ (or by $I$, when working with Bott--Samelson resolutions); compare with \cite[Thm. 10.16]{rw} and the remarks preceding it. Recall that we are working in the {\'e}tale context \cite[\textsection 9.3(2)]{rw}, so the ring $\mathbb{Z}'$ should be chosen as $\mathbb{O}$ and then the appropriate analogue of \cite[Lemma 10.3]{rw} holds. No other changes are necessary through to the end of \cite[\textsection 10.3]{rw}; in the subsequent section we have the following:
\begin{itemize}[leftmargin=0.75in]
    \itemb We replace the indented isomorphism with
    \begin{align*}
        \Hom_{D_{I \rtimes \mathbb{G}_m}^b(\Fl,\mathbb{Z}')}(\mathscr{E}(\varnothing),\mathscr{E}(\varnothing)[2m]) & \cong H_{I \rtimes \mathbb{G}_m}^{2m}(\text{pt},\mathbb{Z}') \\ & \cong 
        H_{T \times \mathbb{G}_m}^{2m}(\text{pt},\mathbb{Z}') \\
        & \cong \text{Sym}^m(\mathbb{Z}' \otimes_{\mathbb{Z}} \mathfrak{h}_{\mathbb{Z}}),
    \end{align*}
    where the second-last isomorphism is due to the existence of a surjection $I \rtimes \mathbb{G}_m \to T \times \mathbb{G}_m$ with a unipotent kernel. 
    \vspace{0.25cm}
    \itemc No modification is needed for the description of the image of the upper dot morphism. For the lower dot morphism, note that in the classical setting the identification 
    $$\mathscr{E}_{\mathbb{Z}'}(s) = \underline{\mathbb{Z}}'_X[1] = \mathbb{D} \underline{\mathbb{Z}}'_X[-1]$$
    is canonical after a fixed choice of orientation of $\mathbb{C}$, i.e. of $\sqrt{-1} \in \mathbb{C}$. This can be rephrased as the choice of a continuous isomorphism between the groups $\mathbb{Q}/\mathbb{Z}$ and the roots of unity $\mu_\infty \le \mathbb{C}^\times$. In the {\'e}tale context on the 1-dimensional $\mathbb{F}$-variety $\mathbb{P}^1$, we replace this by a fixed choice of an isomorphism between $\mathbb{Z}_\ell$ and $H^2(\mathbb{P}^1,\mathbb{Z}_\ell)$; see \cite[\textsection 7.1, 7.4]{dan}. This base changes to an isomorphism between $\mathbb{O}$ and $H^2(\mathbb{P}^1,\mathbb{O})$.
    \vspace{0.25cm}
    \itemd The statement of Lemma 10.7 goes through without modification, since we still have
    $$\BSvar(ss) \cong \mathcal{P}_s/I \times \mathcal{P}_s/I \cong \mathbb{P}^1 \times \mathbb{P}^1.$$
    Modulo the selection of adjunction morphisms $a_*, a_!$ (in 10.4.3), Lemma 10.8 and the remainder of 10.4.4 are formal consequences.
    \vspace{0.25cm}
    \iteme For the statement of Lemma 10.9, we must move from $\mathbb{Q}$ to $\mathbb{K}$, and from $\mathscr{B}$-equivariant derived categories on $\mathscr{X}$ to $I \rtimes \mathbb{G}_m$-equivariant categories on $\Fl$. The former change is handled by our analogue of Lemma 10.3. For the latter change, recall that the $I$-equivariant setting on $\Fl$ is already verified by \cite[\textsection 10.7]{rw}, so it remains to replace $I$ with $I \rtimes \mathbb{G}_m$. For this, observe that by \cite[Lemma 2.2]{mr},
    \begin{align*}
    \Rl \otimes_{\Rl'} & \Hom_{D_I^b(\Fl,\mathbb{K})}^\bullet(\mathbb{K}(\mathcal{F}_s),\mathbb{K}(\mathcal{F}_t)) \\
    &\cong \Hom_{D_{I \rtimes \mathbb{G}_m}^b(\Fl,\mathbb{K})}^\bullet(\mathbb{K}(\mathcal{F}_s),\mathbb{K}(\mathcal{F}_t)).
    \end{align*}
    Since $\Rl = \Rl'[d]$ is a polynomial ring over $\Rl'$ and $$\Hom_{D_I^b(\Fl,\mathbb{K})}(\mathbb{K}(\mathcal{F}_s),\mathbb{K}(\mathcal{F}_t)) = \mathbb{K},$$ the same is true for $\Hom_{D_{I \rtimes \mathbb{G}_m}^b(\Fl,\mathbb{K})}(\mathbb{K}(\mathcal{F}_s),\mathbb{K}(\mathcal{F}_t))$.
    
    Lemma 10.10 hinges on the assertion that $\nu_{\underline{w}}$ is birational with connected fibers, as proven in \cite[Lemme 32]{mat}. The birationality is clear, considering the open Schubert cell $IwI/I \subseteq \Fl_w$. Concerning connectedness, note that by definition the target of $\nu_{\underline{w}}$ is a normal variety, so we can apply Zariski's main theorem. 
\end{itemize}
Besides the universal changes indicated above, there are no meaningful alterations to note for \cite[\textsection \textsection 10.5-10.6]{rw} (or the proofs and results those sections reference from earlier in the paper, such as \cite[Proposition 9.17]{rw}).
\end{proof}

\subsection{Second equivalence}
By definition, any object in $\Parity_{I \rtimes \Gm}(\Fl,\bk)$ arises as a direct sum of graded shifts of summands of objects obtained from $\mathscr{E}(\varnothing)$ by convolution with various $\mathscr{E}(s)$. Now $\mathsf{Av}$ commutes with the convolution products discussed in \textsection \ref{s:conv}, i.e.
$\mathsf{Av}(E \star F) = \mathsf{Av}(E) \star F,$ 
and $$\mathsf{Av}(\mathscr{E}(1)) = \Delta_{\IW,\Gm}(1) \cong \nabla_{\IW,\Gm}(1)$$ is a parity object, so if $\mathscr{E} \in \Parity_{I \rtimes \mathbb{G}_m}(\Fl,\bk)$ then $\mathsf{Av}(\mathscr{E}) \in \Parity_{\IW, \mathbb{G}_m}(\Fl,\bk)$. These same observations are made in the proof of \cite[Corollary 11.5]{rw}, and they enable the next definition.

\begin{defn}
Let $\Parity_{\IW,\mathbb{G}_m}^{\BSvar}(\Fl,\bk)$ denote the essential image of $$\mathsf{Av}: \Parity_{I \rtimes \mathbb{G}_m}^{\BSvar}(\Fl,\bk) \to \Parity_{\IW, \mathbb{G}_m}(\Fl,\bk).$$
\end{defn}

In \cite[\textsection 11]{rw}, the equivalences from \textsection \ref{s:first} are extended to diagrams of categories with horizontal equivalences, commuting up to natural isomorphism:
$$
\begin{tikzcd}
\mathscr{H}_{\BSvar}' \arrow{d}{q} \arrow{r}{\Delta_{\BSvar}'} & \Parity_{I}^{\BSvar}(\Fl,\bk) \arrow{d}{\mathsf{Av}} \\
\overline{\mathscr{H}_{\BSvar}'} \arrow{r}{\overline{\Delta_{\BSvar}'}} & \Parity_{\IW}^{\BSvar}(\Fl,\bk).
\end{tikzcd}
\quad
\begin{tikzpicture}
\draw[-stealth,decorate,decoration={snake,amplitude=3pt,pre length=2pt,post length=3pt}] (0,-0.5) -- node[align=center, above] {} ++(1,0);
\end{tikzpicture} \quad
\begin{tikzcd}
\mathscr{H}' \arrow{d}{q} \arrow{r}{\Delta'} & \Parity_{I}(\Fl,\bk) \arrow{d}{\mathsf{Av}} \\
\overline{\mathscr{H}'} \arrow{r}{\overline{\Delta'}} & \Parity_{\IW}(\Fl,\bk).
\end{tikzcd}
$$
Here the overlines denote antispherical quotients \cite[\textsection 1.3]{rw}, while the snake arrow denotes passage to Karoubi envelopes of additive hulls. We now prove the obvious $\mathbb{G}_m$-equivariant analogue.

\begin{thm} \label{secequiv}
We have diagrams of categories with horizontal equivalences, commuting up to natural isomorphism:
$$
\begin{tikzcd}
\mathscr{H}_{\BSvar} \arrow{d}{q} \arrow{r}{\Delta_{\BSvar}} & \Parity_{I \rtimes \mathbb{G}_m}^{\BSvar}(\Fl,\bk) \arrow{d}{\mathsf{Av}} \\
\overline{\mathscr{H}_{\BSvar}} \arrow{r}{\overline{\Delta_{\BSvar}}} & \Parity_{\IW,\mathbb{G}_m}^{\BSvar}(\Fl,\bk).
\end{tikzcd}
\quad
\begin{tikzpicture}
\draw[-stealth,decorate,decoration={snake,amplitude=3pt,pre length=2pt,post length=3pt}] (0,-0.5) -- node[align=center, above] {} ++(1,0);
\end{tikzpicture} \quad
\begin{tikzcd}
\mathscr{H} \arrow{d}{q} \arrow{r}{\Delta} & \Parity_{I \rtimes \mathbb{G}_m}(\Fl,\bk) \arrow{d}{\mathsf{Av}} \\
\overline{\mathscr{H}} \arrow{r}{\overline{\Delta}} & \Parity_{\IW,\mathbb{G}_m}(\Fl,\bk).
\end{tikzcd}
$$
\end{thm}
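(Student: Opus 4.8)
The plan is to follow the proof of the non-equivariant diagram in \cite[\textsection 11]{rw} essentially verbatim, taking Theorem~\ref{firsteq} for the top horizontal equivalences and treating loop-rotation equivariance exactly as in that theorem's proof; the right-hand square then follows formally from the left-hand one. Concretely, one first checks, as in \cite[\textsection 11]{rw}, that $\Parity_{\IW,\Gm}(\Fl,\bk)$ is the Karoubi envelope of the additive hull of $\Parity_{\IW,\Gm}^{\BSvar}(\Fl,\bk)$ -- using the uniqueness of indecomposable $\Gm$-equivariant parity objects from \textsection\ref{s:par} together with the compatibility of $\For_\Gm$ with $\mathsf{Av}$ -- and that $\overline{\mathscr{H}}$ is the Karoubi envelope of the additive hull of $\overline{\mathscr{H}_{\BSvar}}$; applying this construction to the left square produces the right square, carrying equivalences to equivalences.

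First I would construct $\overline{\Delta_{\BSvar}}$. Since the antispherical quotient functor $q$ is the identity on objects and full, it suffices to show that $\mathsf{Av} \circ \Delta_{\BSvar}$ annihilates the ideal defining $\overline{\mathscr{H}_{\BSvar}}$, namely the morphisms factoring through objects $(\underline{u},n)$ whose underlying product $u \in W$ lies outside ${}^{\mathrm{f}}W$. By \eqref{concat} and the module identity $\mathsf{Av}(E \star F) \cong \mathsf{Av}(E) \star F$, this reduces to the vanishing $\mathsf{Av}(\mathscr{E}(\underline{u})) \cong 0$ for all such $\underline{u}$. Here I would invoke that $\For_\Gm$ is conservative and commutes with $\mathsf{Av}$, and that, by the construction of $\Delta_{\BSvar}$ in Theorem~\ref{firsteq}, $\For_\Gm$ intertwines $\Delta_{\BSvar}$ with the non-equivariant equivalence $\Delta_{\BSvar}'$ of \cite[\textsection 11]{rw}, sending $\Delta_{\BSvar}(\underline{u},0)$ to $\mathscr{E}(\underline{u}) \in \Parity_I^{\BSvar}(\Fl,\bk)$; the required vanishing then follows from its non-equivariant counterpart in \cite[\textsection 11]{rw}. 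Hence $\mathsf{Av} \circ \Delta_{\BSvar}$ factors uniquely through $q$, defining $\overline{\Delta_{\BSvar}}$, and the left square commutes up to the coherence isomorphisms already in play.

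Next I would show $\overline{\Delta_{\BSvar}}$ is an equivalence. Essential surjectivity is immediate: $\Parity_{\IW,\Gm}^{\BSvar}(\Fl,\bk)$ is by definition the essential image of $\mathsf{Av}$, $\Delta_{\BSvar}$ is essentially surjective, and $q$ is bijective on objects. For full faithfulness, by Theorem~\ref{firsteq} it is enough to verify that for all expressions $\underline{w},\underline{v}$ the map induced by $\mathsf{Av}$,
$$\Hom_{\overline{\mathscr{H}_{\BSvar}}}(q B_{\underline{w}},\, q B_{\underline{v}}) \;\longrightarrow\; \Hom_{\Parity_{\IW,\Gm}^{\BSvar}(\Fl,\bk)}\bigl(\mathsf{Av}\,\mathscr{E}(\underline{w}),\, \mathsf{Av}\,\mathscr{E}(\underline{v})\bigr),$$
is an isomorphism. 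This is the $\Gm$-equivariant form of the central computation of \cite[\textsection 11]{rw}. As in the treatment of \cite[\textsection 10.4.2, \textsection 10.4.5]{rw} within the proof of Theorem~\ref{firsteq}, the surjection $I \rtimes \Gm \twoheadrightarrow T \times \Gm$ with unipotent kernel, together with \cite[Lemma 2.2]{mr}, identifies the $\Gm$-equivariant $\Hom$-spaces among the $\mathscr{E}(\underline{w})$ with the $I$-equivariant ones base-changed along the flat extension $R_\bk' \hookrightarrow R_\bk = R_\bk'[d]$; since the extra polynomial variable $d$ is transparent to the antispherical quotient, the claim reduces verbatim to \cite[\textsection 11]{rw}, yielding both fullness and faithfulness.

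The main obstacle is precisely this Hom-space identification: one must know that averaging, in the presence of loop-rotation equivariance, neither loses nor creates morphisms beyond those predicted by the antispherical module. As with Theorem~\ref{firsteq}, though, this is not a genuinely new difficulty -- it is disposed of by the two structural facts that $\For_\Gm$ is conservative and commutes with $\mathsf{Av}$ (which transports the relevant vanishing statements from \cite{rw}) and that the $\Gm$-equivariant morphism spaces differ from the $I$-equivariant ones only by adjoining the variable $d$. Everything else is the formal bookkeeping of \cite[\textsection 11]{rw}.
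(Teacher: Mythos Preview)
Your proposal is correct and follows essentially the same approach as the paper's proof: both reduce to \cite[\textsection 11]{rw} by first establishing the vanishing $\mathsf{Av}(\mathscr{E}(w)) = 0$ for $w \notin {}^{\mathrm{f}}W$ via conservativity of $\For_{\Gm}$ and its commutation with $\mathsf{Av}$, then noting that full faithfulness goes through with only notational changes. The paper is slightly more explicit about why $\For_{\Gm}$ is conservative on parity objects (using Krull--Schmidt and that $\For$ preserves indecomposables, via \cite[Lemma 2.4]{mr}), whereas you invoke it directly; conversely, you spell out the Hom-space base-change along $R_\bk' \hookrightarrow R_\bk$ a bit more than the paper does, but the substance is the same.
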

\begin{proof}
The argument is entirely analogous to the proof of \cite[Theorem 11.11]{rw}, with modifications and annotations as follows. We first make the same universal notational changes as in the proof of Theorem \ref{firsteq}, along with the specialisation $J = S_{\text{f}}$. The next step is to verify the analogue of \cite[Lemma 11.7]{rw}, namely that if $w \in W - {^{\text{f}}W}$ then
\begin{equation} \label{avvanish}
\mathsf{Av}(\mathscr{E}(w)) = 0.
\end{equation}
For this, note that we have a square of categories commuting up to natural isomorphism,
$$
\begin{tikzcd}
\Parity_{I \rtimes \mathbb{G}_m}(\Fl,\bk) \arrow{d}{\mathsf{Av}} \arrow{r}{\For} & \Parity_{I}(\Fl,\bk) \arrow{d}{\mathsf{Av}} \\
\Parity_{\IW, \mathbb{G}_m}(\Fl,\bk) \arrow{r}{\For} & \Parity_{\IW}(\Fl,\bk);
\end{tikzcd}
$$
this is evident from the definition of Av as in \eqref{avg}. Since $\Parity_{\IW, \mathbb{G}_m}(\Fl,\bk)$ is Krull--Schmidt and $\For(\mathscr{E}_{\IW,\mathbb{G}_m}(u)) = \mathscr{E}_{\IW}(u)$ for $u \in {^{\text{f}}}W$ by the general theory of parity sheaves \cite[Lemma 2.4]{mr}, we can conclude that $\For(\mathscr{F}) = 0$ forces $\mathscr{F} = 0$ for $\mathscr{F} \in \Parity_{I \rtimes \mathbb{G}_m}(\Fl,\bk)$. But indeed
$$\For(\mathsf{Av}(\mathscr{E}(w))) = \mathsf{Av}(\For(\mathscr{E}(w))) = 0$$
by \cite[Lemma 11.7]{rw}, so \eqref{avvanish} follows and in fact $\mathsf{Av}(\mathscr{E}(\underline{w})) = 0$ for any reduced expression $\underline{w}$ of $w$. This implies the existence of the functor $\overline{\Delta_{\BSvar}}$. The proof that it is fully faithful proceeds just as in \cite{rw}, with only notational alterations, because \cite[Lemmata 11.1--2]{rw}, the theory of \cite[\textsection 11.3]{rw}, and \cite[Proposition 11.9]{rw} are all immediately adapted. 
\end{proof}

\begin{cor} \label{gract}
The category $\Parity_{\IW,\mathbb{G}_m}(\Fl,\bk)$ admits an action of $\mathscr{H}$ by graded functors, such that $\overline{\Delta}$ is an equivalence of graded right $\mathscr{H}$-modules.
\end{cor}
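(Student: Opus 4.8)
The plan is to leverage Theorem~\ref{secequiv} directly: the right-hand vertical functor $\mathsf{Av}$ and the bottom horizontal functor $\overline{\Delta}$ give the required structure, while the monoidal category $\mathscr{H}$ acts on itself by right multiplication. First I would recall that $\mathscr{H}$ is a monoidal category, so right convolution $(-) \star B$ for $B \in \mathscr{H}$ defines endofunctors of $\mathscr{H}$; restricting to the Bott--Samelson generators $B_s$ and the shift $\langle 1 \rangle$, and noting that the antispherical quotient $\overline{\mathscr{H}}$ is by construction a right $\mathscr{H}$-module (the quotient functor $q$ being right $\mathscr{H}$-linear, as in \cite[\textsection 1.3]{rw}), we see $\overline{\mathscr{H}}$ carries a right $\mathscr{H}$-action by graded functors. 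The content of the corollary is then simply that the equivalence $\overline{\Delta} \colon \overline{\mathscr{H}} \xrightarrow{\cong} \Parity_{\IW,\mathbb{G}_m}(\Fl,\bk)$ transports this action across.

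The key steps, in order, would be: (i) observe that $\Parity_{I \rtimes \mathbb{G}_m}(\Fl,\bk)$ is itself a right module over $\mathscr{H}$ via $\Delta$, because convolution $(-) \star^{I \rtimes \mathbb{G}_m} \mathscr{E}(s)$ preserves parity objects (noted in \textsection\ref{sec:asm} after \eqref{concat}) and the equivalence $\Delta_{\BSvar}$ of Theorem~\ref{firsteq} is monoidal, so the $\mathscr{H}$-action is intertwined with geometric convolution; (ii) recall from \textsection\ref{s:conv} that $\mathsf{Av}$ is compatible with convolution on the right, $\mathsf{Av}(E \star F) \cong \mathsf{Av}(E) \star F$, so $\mathsf{Av}$ is a morphism of right $\mathscr{H}$-modules onto its essential image $\Parity_{\IW,\mathbb{G}_m}(\Fl,\bk)$; (iii) match this with the algebraic side: by Theorem~\ref{secequiv} the square relating $q$, $\mathsf{Av}$, $\Delta$ and $\overline{\Delta}$ commutes up to natural isomorphism, and $q$ is a right $\mathscr{H}$-module functor, whence $\overline{\Delta}$ intertwines the two right $\mathscr{H}$-actions; (iv) conclude that $\overline{\Delta}$ is an equivalence of graded right $\mathscr{H}$-modules, all functors in sight being graded since they commute with $\langle 1 \rangle$. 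One should also remark that the action descends from Bott--Samelson generators to all of $\mathscr{H}$ by passing to Karoubi envelopes, which is harmless since convolution with a summand of $\mathscr{E}(\underline{w})$ is a summand of convolution with $\mathscr{E}(\underline{w})$.

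The main obstacle I anticipate is bookkeeping rather than conceptual: one must check that the coherence isomorphisms making $\mathsf{Av}(E \star F) \cong \mathsf{Av}(E) \star F$ are compatible with the associativity and unit constraints of the $\mathscr{H}$-action, i.e.\ that $\mathsf{Av}$ is genuinely a module functor and not merely compatible with the action on objects; this is where the argument of \cite[Corollary 11.5]{rw}, cited before the definition of $\Parity_{\IW,\mathbb{G}_m}^{\BSvar}(\Fl,\bk)$, does the real work, and our situation differs from it only by the extra $\mathbb{G}_m$-factor, which is harmless because all the relevant convolution diagrams in \textsection\ref{s:conv} were set up $\mathbb{G}_m$-equivariantly. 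A secondary point is ensuring the $\mathscr{H}$-action is well-defined on the nose (not just up to the ambiguities of Karoubi completion): but since $\Parity_{I \rtimes \mathbb{G}_m}(\Fl,\bk)$ is Krull--Schmidt (noted after Theorem~\ref{firsteq}) and convolution with $\mathscr{E}(s)$ is an honest functor, there is no difficulty, and the corollary follows formally.
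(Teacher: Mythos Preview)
Your proposal is correct and matches the paper's (implicit) approach: the corollary is stated without proof immediately after Theorem~\ref{secequiv}, and the intended argument is precisely the transport-of-structure you describe, using that $\overline{\mathscr{H}}$ is a right $\mathscr{H}$-module, that $\Delta$ is monoidal, and that $\mathsf{Av}$ commutes with right convolution. You have simply spelled out the details the paper leaves tacit.
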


\section{Geometric action on the Smith quotient} \label{sec:asq}
In this section, we begin by recalling a number of key results and constructions from \cite{st}, particularly the Smith categories of $\mathbb{F}$-varieties with trivial $\varpi$-action and some associated functors. Leveraging an understanding of the morphism spaces between parity objects in the Iwahori--Whittaker Smith category on $\Fl$, we are then able to define and study an action of $\mathscr{H}$ on the parity Smith quotient.

\subsection{Smith categories}
Suppose $X$ is an $\mathbb{F}$-variety with an action of $\mathbb{G}_m$. Recall from \cite{st} the \textit{equivariant Smith category}, defined as the Verdier quotient
$$\Smith(X^{\varpi},\bk) = D_{\mathbb{G}_m}^b(X^\varpi,\bk)/D_{\mathbb{G}_m}^b(X^\varpi,\bk)_{\text{$\varpi$-perf}},$$
where $D_{\mathbb{G}_m}^b(X^\varpi,\bk)_{\text{$\varpi$-perf}}$ is the full subcategory of objects $\mathscr{F}$ for which $\Res_\varpi^{\mathbb{G}_m}(\mathscr{F})$ has \textit{perfect geometric stalks} in the sense of \cite[\textsection 3.3]{rw}. Our main interest will be in the following variant: if $\mathscr{X}$ is as in \textsection \ref{s:par} and $Y \subseteq \mathscr{X}^\varpi$ is a finite union of orbits of $I_\ell^+ = (I^+)^\varpi$, we can consider a category $D_{\IW,\mathbb{G}_m}^b(Y,\bk)$, constructed as in \textsection \ref{s:iw} via a restriction of $\chi$ to $(I_u^+)^\varpi$. (In \cite{st}, this modified construction is denoted $D_{\IW_\ell,\mathbb{G}_m}^b(Y,\bk)$, but we will slightly abuse notation and suppress the subscript $\ell$.) There is then an \textit{Iwahori--Whittaker Smith quotient}
$$\mathsf{Q}_Y: D_{\IW,\mathbb{G}_m}^b(Y,\bk) \to \Smith_{\IW}(Y,\bk) = D_{\IW,\mathbb{G}_m}^b(Y,\bk)/D_{\IW,\mathbb{G}_m}^b(Y,\bk)_{\text{$\varpi$-perf}}.$$
\begin{prop} \label{summary}
Assume that $\mathscr{X}$ is as in \textsection \ref{s:par} with a fixed action of $I^+ \rtimes \mathbb{G}_m$. Let $Y, Z \subseteq \mathscr{X}^\varpi$ be locally closed finite unions of $I_\ell^+$-orbits.
\begin{enumerate}
    \item If $f: Y \to Z$ is a quasi-separated morphism between $\mathbb{F}$-varieties, then for $\dagger \in \{ !, * \}$ there exist functors 
    $$f_\dagger^\Smith: \Smith_{\IW}(Y,\bk) \to \Smith_{\IW}(Z,\bk), \quad f^\dagger_\Smith: \Smith_{\IW}(Z,\bk) \to \Smith_{\IW}(Y,\bk)$$
    such that the following diagrams commute:
    $$
\begin{tikzcd}
D_{\IW,\mathbb{G}_m}^b(Z,\bk) \arrow{d}{\mathsf{Q}_Z} \arrow{r}{f^\dagger} & D_{\IW,\mathbb{G}_m}^b(Y,\bk) \arrow{d}{\mathsf{Q}_Y} \\
\Smith_{\IW}(Z,\bk) \arrow{r}{f_\Smith^\dagger} & \Smith_{\IW}(Y,\bk)
\end{tikzcd}, \quad \begin{tikzcd}
D_{\IW,\mathbb{G}_m}^b(Y,\bk) \arrow{d}{\mathsf{Q}_Y} \arrow{r}{f_\dagger} & D_{\IW,\mathbb{G}_m}^b(Z,\bk) \arrow{d}{\mathsf{Q}_Z} \\
\Smith_{\IW}(Y,\bk) \arrow{r}{f^\Smith_\dagger} & \Smith_{\IW}(Z,\bk).
\end{tikzcd}
$$
We have that $(f_\Smith^*,f_*^\Smith)$ and $(f^\Smith_!,f^!_\Smith)$ are adjoint pairs of functors, so in particular if $f$ is a closed embedding then $f_*^\Smith = f_!^\Smith$ is fully faithful.
\item Up to a change from $\zeta$ to $\zeta^{-1}$, Verdier duality $\mathbb{D}_Y$ preserves $D_{\IW,\mathbb{G}_m}^b(Y,\bk)_{\text{$\varpi$-perf}}$ and therefore descends to a functor on $\Smith_{\IW}(Y,\bk)$.
\item If $X \subseteq \mathscr{X}$ is a locally closed finite union of $I^+$-orbits and $i_X: X^\varpi \hookrightarrow X$ is the inclusion, then the cone of the natural morphism $i_X^! \to i_X^*$ is killed by the Smith quotient functor, yielding a \textit{Smith restriction functor}
$$i_X^{!*}: D_{\IW,\mathbb{G}_m}^b(X,\bk) \to \Smith_{\IW}(X^\varpi,\bk).$$

\item Let $X_1 \subseteq X_2 \subseteq \mathscr{X}$ be locally closed finite unions of $I^+$-orbits, with inclusions $j: X_1 \hookrightarrow X_2$ and $j^\varpi: X_1^\varpi \hookrightarrow X_2^\varpi$. For $\dagger \in \{ !, * \}$, we have
$$i_{X_2}^{!*} \circ j_\dagger \cong (j^\varpi)_\dagger^{\Smith} \circ i_{X_1}^{!*}, \quad i_{X_1}^{!*} \circ j^\dagger \cong (j^\varpi)^\dagger_{\Smith} \circ i_{X_2}^{!*}.$$

\item There is a canonical natural isomorphism,
$$e_Y: \id \cong [2]: \Smith_{\IW}(Y,\bk) \to \Smith_{\IW}(Y,\bk).$$
\end{enumerate}
\end{prop}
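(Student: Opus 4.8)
The plan is to deduce all five assertions from the Smith-theoretic formalism already set up in \cite[\textsection\textsection 3--4]{st} in the $L^+G$- and $I$-equivariant settings, checking only that the Iwahori--Whittaker and loop-rotation-equivariant refinements of \textsection\textsection\ref{s:iw}--\ref{s:par} are transparent to those constructions. The one observation I would isolate and use throughout is that $\varpi$-perfection is a condition imposed after $\Res^{\Gm}_\varpi$ and on geometric stalks, so it is detected by the forgetful functors $D^{\mathrm b}_{\IW,\Gm}(Y,\bk)\to D^{\mathrm b}_{\Gm,c}(Y,\bk)\to D^{\mathrm b}_c(Y,\bk)$; hence $D^{\mathrm b}_{\IW,\Gm}(Y,\bk)_{\varpi\text{-perf}}$ is a thick triangulated subcategory, and the Artin--Schreier equivariance condition \eqref{iwcond} neither creates nor destroys $\varpi$-perfect objects. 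Granting this, every functor to be produced is obtained by pushing a functor already defined on $D^{\mathrm b}_{\IW,\Gm}$ through the defining Verdier quotients, once it is checked to preserve the $\varpi$-perfect subcategories. For (1), the hypotheses force $Y$ and $Z$ to be finite-dimensional varieties of finite type, so the estimates of \cite[\textsection 4]{st} apply: $f^*$ preserves $\varpi$-perfection because the geometric stalks of $f^*\mathscr{F}$ occur among those of $\mathscr{F}$; $f_*$ and $f_!$ do so by the finite-type pushforward arguments of loc.\ cit.; and $f^!$ does so by Verdier duality from $f^*$ via (2). Each descends to the quotients, yielding $f^\dagger_\Smith$, $f^\Smith_\dagger$ and the asserted commuting squares; the adjunctions $(f^*_\Smith,f^\Smith_*)$ and $(f^\Smith_!,f^!_\Smith)$ are inherited through the universal property of the Verdier quotient, and when $f$ is a closed embedding the isomorphism $f^*f_*\cong\id$ descends to $f^*_\Smith f^\Smith_*\cong\id$, so $f^\Smith_*=f^\Smith_!$ is fully faithful.

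For (2), Verdier duality on $D^{\mathrm b}_{\Gm}(Y,\bk)$ is standard; the only genuinely new point is that $\mathbb{D}_Y$ carries the $(J,\chi^*_J\LAS)$-equivariance attached to $\zeta$ to the one attached to $\zeta^{-1}$, since $\mathbb{D}\LAS$ for $\zeta$ is $\LAS$ for $\zeta^{-1}$ up to shift and twist, and that $\mathbb{D}_Y$ turns perfect geometric stalks into perfect geometric costalks, which on a finite union of orbits is again a perfect-stalks condition; so $\mathbb{D}_Y$ descends to $\Smith_{\IW}(Y,\bk)$ after this switch, exactly as in \cite[\textsection 3.3]{st}. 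For (3) I would invoke the \emph{concentration lemma} of Smith theory (in the form of \cite{tr} and \cite[\textsection 3]{st}): the cone of $i_X^!\to i_X^*$, once restricted to $\varpi$, is supported away from the fixed locus in the relevant sense and has perfect geometric stalks, the local computation being insensitive to the Artin--Schreier twist; hence $\mathsf{Q}_{X^\varpi}$ kills it, $i_X^!$ and $i_X^*$ become canonically isomorphic after $\mathsf{Q}_{X^\varpi}$, and this defines $i_X^{!*}$.

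For (4) I would combine (1), (3), and proper/smooth base change in $D^{\mathrm b}_{\IW,\Gm}$ for the cartesian square relating $j$, $j^\varpi$ and the two fixed-point inclusions; the displayed isomorphisms then follow formally. For (5) it suffices to construct a canonical $e_{\pt}\colon\id\cong[2]$ on the Smith quotient of $D^{\mathrm b}_{\Gm}(\pt,\bk)$ for the trivial-action point $\pt$, and to pull it back along the structure map $Y\to\pt$, using that $\mathsf{Q}_Y$ and the functors of (1) are compatible with this pullback via the module structure of $D^{\mathrm b}_{\IW,\Gm}(Y,\bk)$ over $D^{\mathrm b}_{\Gm}(\pt,\bk)$. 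On $\pt$ the degree-$2$ equivariant Euler class in $H^2_{\Gm}(\pt,\bk)$, normalised by the identification $H^2(\mathbb{P}^1_{\mathbb{F}},\mathbb{O})\cong\mathbb{O}$ fixed in the proof of Theorem \ref{firsteq}, gives a morphism $\underline{\bk}_{\pt}\to\underline{\bk}_{\pt}[2]$ whose cone is $\varpi$-perfect by \cite[\textsection 3]{st}, hence an isomorphism in the quotient; naturality of the pulled-back $e_Y$ is then immediate.

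The step I expect to cost the most is (2), and through it the bookkeeping of Verdier duality everywhere else: one must fix the pair $(\zeta,\zeta^{-1})$ once and for all and carry the resulting switch consistently through the adjunction arguments of (1) and the concentration argument of (3). Beyond that, the work is to confirm that the averaging functor of \textsection\ref{s:iw} commutes with the six operations and with the Smith quotient --- which it does essentially by construction --- so that the proofs of \cite{st} transcribe with only the notational changes already adopted in Theorems \ref{firsteq} and \ref{secequiv}.
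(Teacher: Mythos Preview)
Your proposal is essentially correct and follows the same strategy as the paper: parts (3)--(5) are cited from \cite[\S 6]{st}; part (1) for $(f^*,f_*)$ is read off from the proof of \cite[Lemma~6.1]{st}; and then (2) together with Verdier duality yields (1) for $(f_!,f^!)$. Your identification of (2) as the crux is also on target.

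The one place where your sketch is thinner than the paper is the justification of (2). You write that $\mathbb{D}_Y$ ``turns perfect geometric stalks into perfect geometric costalks, which on a finite union of orbits is again a perfect-stalks condition,'' and attribute this to \cite[\S 3.3]{st}. But the paper singles out (2) precisely as one of the statements \emph{not} covered there, and your sentence is, as it stands, the whole content of the claim rather than a proof of it. The paper's argument is concrete: pass through the identification $D^b_\varpi(Y,\bk)\cong D^b(Y,\bk[\varpi])$ and d\'evissage to $F=j_!\mathcal{L}$ for a local system $\mathcal{L}$ on an open stratum with free $\bk[\varpi]$-stalks; then $\mathbb{D}\mathcal{L}$ is (up to shift and twist) the dual local system $\mathcal{L}^\vee$, hence also has free stalks; finally $\mathbb{D}F=j_*\mathbb{D}\mathcal{L}$ is $\varpi$-perfect by \cite[Lemma~3.6]{st}. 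The key input making your ``costalks $\Leftrightarrow$ stalks'' heuristic rigorous is that $j_*$ (not only $j_!$) preserves $\varpi$-perfection, and that lemma should be named explicitly. The $\zeta\leftrightarrow\zeta^{-1}$ switch you flag is handled by the paper via $\Av_\zeta(\mathbb{D}F)\cong\mathbb{D}\,\Av_{\zeta^{-1}}(F)$, exactly as you anticipate.
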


\begin{proof}
Most of these statements are proven in \cite[\textsection 6]{st}; the exceptions are (1), which is a generalised version of \cite[Lemma 6.1]{st}, and (2). It is evident from the proof of the latter lemma that our claim holds for $(f^*,f_*)$, so it will suffice to prove (2). Firstly, we have that
$$\Av_\zeta(\mathbb{D} F) \cong \mathbb{D} \Av_{\zeta^{-1}}(F),$$
using an obvious notation to keep track of the $p$-th root of unity chosen for the construction; this follows from the discussion preceding \cite[Lemma 3.8]{bgmrr} and shows that $\mathbb{D}$ respects Iwahori--Whittaker sheaves in the required sense. It remains to prove that $\mathbb{D}(F)$ lies in $D_{\Gm}^b(Y,\bk)_{\text{$\varpi$-perf}}$ if $F$ does. In view of the identification
$$D_\varpi^b(Y,\bk) \cong D^b(Y,\bk[\varpi])$$
and by a standard \textit{d{\'e}visage} argument, we reduce to proving that if $L = j_! \mathcal{L}$ is the extension by zero of a locally constant sheaf on an open stratum $j: U \hookrightarrow Y$, with stalks which are free $\bk[\varpi]$-modules, then the stalks of $\mathbb{D} L$ are likewise. Now, $\mathbb{D} \mathcal{L}$ is a shift and Tate twist of the dual local system $\mathcal{L}^\vee$ over $\bk[\varpi]$, so in particular its stalks are free $\bk[\varpi]$-modules. This shows that $\mathbb{D} \mathcal{L}$ lies in $D_{\mathbb{G}_m}^b(U,\bk)_{\text{$\varpi$-perf}}$. Then 
$\mathbb{D} L = j_* \mathbb{D} \mathcal{L}$ lies in $D_{\mathbb{G}_m}^b(Y,\bk)_{\text{$\varpi$-perf}}$ by \cite[Lemma 3.6]{st}.
\end{proof}

Suppose $\mathscr{U} \subseteq \mathscr{X}^\varpi$ is an ind-scheme, the direct limit of closed finite unions of $I_\ell^+$-orbits. Using Proposition \ref{summary}(1), we define $\Smith_{\IW}(\mathscr{U},\bk)$ to be the direct limit of the categories $\Smith_{\IW}(Y,\bk)$, for $Y \subseteq \mathscr{U}$ a closed finite union of $I_\ell^+$-orbits. Smith functors and the natural isomorphism in Prop. \ref{summary}(5) similarly extend to $\mathscr{U}$.

Using Proposition \ref{summary}(1), we construct functors 
$$(q^s)_\Smith^*: \Smith_{\IW}(\Fl_\ell^s,\bk) \to \Smith_{\IW}(\Fl_\ell,\bk), \quad (q^s)_*^\Smith: \Smith_{\IW}(\Fl_\ell,\bk) \to \Smith_{\IW}(\Fl_\ell^s,\bk),$$
the direct limits of the Smith functors associated to projections between compatible closed finite unions of $I_\ell^+$-orbits on $\Fl_\ell$ and $\Fl_\ell^s$. 

With $\mathscr{X}$ as in \textsection 2.12 and $\alpha \in A_+$, let
$$\mathscr{L}_\Smith^\mathscr{X}(\alpha) = i_{\mathscr{X}_\alpha^+}^{!*}(\mathscr{L}_{\text{AS}}^\mathscr{X}(\alpha)) \in \Smith_{\IW}(\mathscr{X}_\alpha,\bk).$$
We say the orbit $\mathscr{X}_\alpha^+$ is \textit{nice} if
$(\mathscr{X}_\alpha^+)^\varpi$ is an orbit of both $I_\ell^+$ and $I_{u,\ell}^+$. The next result is obtained by adapting the proofs of \cite[Lemma 6.3]{st} and \cite[Proposition 6.5]{st}.
\begin{prop}
Suppose $\mathscr{X}_\alpha^+$ is nice. Then 
$$\Hom_{\Smith_{\IW}((\mathscr{X}_\alpha^+)^\varpi,\bk)}(\mathscr{L}_\Smith^\mathscr{X}(\alpha),\mathscr{L}_\Smith^\mathscr{X}(\alpha)[n]) = \begin{cases}
\bk & \text{if $n$ is even,} \\
0 & \text{if $n$ is odd.}
\end{cases}$$
More generally, if $Y \subseteq \mathscr{X}^\varpi$ is a locally closed finite union of nice orbits, then $\Smith_{\IW}(Y,\bk)$ has finite-dimensional Hom spaces.
\end{prop}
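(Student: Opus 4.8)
The plan is to reduce this to the analogous statements in \cite{st} by a series of formal manipulations, the new content being only that we work on partial affine flag varieties $\Fl_\ell$ and $\Fl_\ell^s$ rather than on $\Gr^\varpi$. First I would establish the $\Hom$-computation for a single nice orbit $\mathscr{X}_\alpha^+$. The hypothesis that $(\mathscr{X}_\alpha^+)^\varpi$ is an orbit of both $I_\ell^+$ and $I_{u,\ell}^+$ is exactly what makes the Iwahori--Whittaker local system $\mathscr{L}_{\text{AS}}^\mathscr{X}(\alpha)$ restrict to something with one-dimensional (graded) equivariant cohomology: as in the proof of \cite[Lemma 6.3]{st}, the Smith restriction $\mathscr{L}_\Smith^\mathscr{X}(\alpha) = i_{\mathscr{X}_\alpha^+}^{!*}(\mathscr{L}_{\text{AS}}^\mathscr{X}(\alpha))$ of such a local system has endomorphism ring computed by a point-type calculation in the Smith category, where the natural isomorphism $e \colon \id \cong [2]$ of Proposition \ref{summary}(5) forces the $\Hom$-spaces in degrees of a fixed parity to all coincide with the degree-$0$ (resp. degree-$1$) piece. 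That degree-$0$ piece is $\bk$ and the degree-$1$ piece vanishes, by the same argument as in \cite{st}: after restricting $\varpi$-equivariance the relevant cohomology is concentrated in even degrees.

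Next I would bootstrap from orbits to a locally closed union $Y$ of nice orbits by a standard d\'evissage. Using the recollement/gluing triangles attached to the stratification of $Y$ by its nice orbits, together with the commutation of Smith quotient with $j_!, j_*, j^!, j^*$ from Proposition \ref{summary}(1) and (4), any object of $\Smith_{\IW}(Y,\bk)$ is built by finitely many triangles out of pushforwards of objects supported on single nice orbits; and on each such orbit the derived category of Iwahori--Whittaker sheaves has, up to shift, only the local system $\mathscr{L}_{\text{AS}}^\mathscr{X}(\alpha)$ as its ``building block'' (an Artin--Schreier twist of a rank-one sheaf), whose Smith restriction has finite-dimensional graded endomorphisms by the first step. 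Applying $\Hom(-,-)$ and using the long exact sequences, one reduces the finiteness of $\Hom_{\Smith_{\IW}(Y,\bk)}(M,N)$ to finiteness of finitely many terms of the form $\Hom_{\Smith_{\IW}((\mathscr{X}_\alpha^+)^\varpi,\bk)}(\mathscr{L}_\Smith^\mathscr{X}(\alpha),\mathscr{L}_\Smith^\mathscr{X}(\beta)[n])$; here one also uses that between two \emph{distinct} orbits the support conditions and the $I_{u,\ell}^+$-equivariance constraint (Iwahori--Whittaker vanishing) kill many contributions, and in the surviving cases one again invokes $e \colon \id \cong [2]$ to bound the graded $\Hom$ by its two parity components, each finite-dimensional over $\bk$.

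I expect the main obstacle to be bookkeeping around the ``niceness'' hypothesis: one must check that the partial affine flag varieties $\Fl_\ell$ and $\Fl_\ell^s$ really are stratified by nice $I_\ell^+$-orbits, i.e. that the $\varpi$-fixed loci of the relevant $I^+$-orbits are single orbits for both $I_\ell^+$ and its pro-unipotent radical $I_{u,\ell}^+$ --- this is where the geometry of \S\ref{s:flag} and the decomposition \cite[Proposition 4.6]{st} enters, and it is the only place the argument is not purely formal transport from \cite{st}. Once niceness is in hand, the remainder is a routine recollement argument: I would be careful to phrase the d\'evissage so that it applies uniformly in the direct limit over closed finite unions of orbits, using that the injection functors of \S2.6 are fully faithful and that Smith restriction and the Smith functors commute with the colimit, as recorded after Proposition \ref{summary}.
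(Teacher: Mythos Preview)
Your approach is essentially the same as the paper's: the paper simply states that the result is obtained by adapting the proofs of \cite[Lemma~6.3]{st} and \cite[Proposition~6.5]{st}, which is exactly the two-step reduction (single-orbit computation, then d\'evissage over a finite union of orbits) you outline. One minor point: your anticipated ``main obstacle'' of verifying that the orbits on $\Fl_\ell$ and $\Fl_\ell^s$ are nice is not part of proving \emph{this} proposition, since niceness is taken as a hypothesis; that verification belongs to the application of the proposition, not its proof.
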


\begin{lem} \label{comm}
Let $f: Y \to Z$ be a quasi-separated morphism between $\mathbb{F}$-varieties as in Prop. \ref{summary}. Then
$$f_\Smith^* e_Z = e_Y f_\Smith^*.$$
\end{lem}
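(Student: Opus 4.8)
The plan is to trace the canonical isomorphism $e$ of Prop.~\ref{summary}(5) back to the $\Gm$-equivariant parameter and then to use that this parameter is visibly compatible with $*$-pullback. Recall that $\coH^\bullet_{\Gm}(\pt,\bk)$ is the polynomial ring $\bk[\st]$ with $\deg \st = 2$, so that cup product with $\st$ equips every $\Gm$-equivariant derived category $D^b_{\Gm}(W,\bk)$ with a natural transformation $\st_W \colon \id \to [2]$; since the Iwahori--Whittaker subcategory $D^b_{\IW,\Gm}(W,\bk)$ is full and stable under $[2]$, this restricts to a natural transformation of its identity functor. First I would recall from \cite[\textsection 6]{st} that the isomorphism $e_W$ is, up to the normalisation made there, exactly the image $\mathsf{Q}_W(\st_W)$ of this natural transformation under the Smith quotient functor --- it becomes invertible precisely because $\st$ is invertible in Tate cohomology. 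Granting this, the lemma reduces to the compatibility of $\st$ with $f^*$.

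Next I would check that compatibility directly. The morphism $f$ is $\Gm$-equivariant --- this is implicit in Prop.~\ref{summary}, being what is needed for $f^*$, hence $f_\Smith^*$, to be defined on the equivariant and Smith categories --- and the class $\st$ is pulled back from $\pt$ along the structure morphisms $a_Y \colon Y \to \pt$ and $a_Z \colon Z \to \pt$, which satisfy $a_Y = a_Z \circ f$. As $f^*$ is compatible with pullback of cohomology classes and with cup products, this yields an equality of natural transformations
\[
    f^* \circ \st_Z \;=\; \st_Y \circ f^* \colon \quad f^* \longrightarrow f^* [2]
\]
between functors $D^b_{\IW,\Gm}(Z,\bk) \to D^b_{\IW,\Gm}(Y,\bk)$. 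Then I would apply the Smith quotients: using the natural isomorphism $\mathsf{Q}_Y \circ f^* \cong f_\Smith^* \circ \mathsf{Q}_Z$ from Prop.~\ref{summary}(1), the identification $e = \mathsf{Q}(\st)$ from the first step, and the essential surjectivity of $\mathsf{Q}_Z$, the displayed identity transports to $f_\Smith^* e_Z = e_Y f_\Smith^*$.

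I expect the only delicate point to be the first step: unwinding the construction of $e_W$ in \cite{st} far enough to confirm that $\mathsf{Q}_W$ intertwines $\st_W$ with $e_W$, so that applying $\mathsf{Q}$ to the $D^b$-level identity really produces the Smith-level identity and not merely something up to an automorphism of $[2]$. If the convention there makes $e_W$ the inverse of $\mathsf{Q}_W(\st_W)$, the same argument applies verbatim. Beyond $\Gm$-equivariance, and quasi-separatedness (used only to guarantee, via Prop.~\ref{summary}(1), that $f_\Smith^*$ exists), no property of $f$ enters.
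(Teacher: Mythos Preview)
Your proposal is correct and follows essentially the same approach as the paper: the paper records that, by the construction in \cite[Lemma 6.4]{st}, there is a morphism $c:\bk_{\pt}\to\bk_{\pt}[2]$ in $D^b_{\Gm,c}(\pt,\bk)$ with $e_W\mathsf{Q}_W=\mathsf{Q}_W\bigl((-)\otimes p_W^*(c)\bigr)$ (your cup product with $\st$ in different language), then uses $p_Y=p_Z\circ f$ and the compatibility of $f^*$ with tensor to obtain $f_\Smith^* e_Z \mathsf{Q}_Z = e_Y f_\Smith^* \mathsf{Q}_Z$, and concludes via the quotient functor $\mathsf{Q}_Z$ being surjective on objects. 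Your ``delicate point'' is exactly what the citation of \cite[Lemma 6.4]{st} handles.
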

\begin{proof}
By the construction in \cite[Lemma 6.4]{st}, there is $c: \bk_{\pt} \to \bk_{\pt}[2]$ in $D_{\Gm,c}^b(\pt,\bk)$ such that 
$$e_Y \mathsf{Q}_Y = \mathsf{Q}_Y((-) \otimes p_Y^*(c)), \quad e_Z \mathsf{Q}_Z = \mathsf{Q}_Z((-) \otimes p_Z^*(c)),$$
where $p_Y$ and $p_Z$ are the projections to $\pt$. Since $p_Y = p_Z \circ f$, we have
\begin{align*}
f_\Smith^* e_Z \mathsf{Q}_Z = f_\Smith^* \mathsf{Q}_Z((-) \otimes p_Z^*(c)) &= \mathsf{Q}_Y f^* ((-) \otimes p_Z^*(c)) \\
&= \mathsf{Q}_Y (f^*(-) \otimes p_Y^*(c)),
\end{align*}
which is simply $e_Y \mathsf{Q}_Y f^* = e_Y f_\Smith^* \mathsf{Q}_Z$. Using the universal property of the quotient functor $\mathsf{Q}_Z$, we then deduce the claimed equality.
\end{proof}

\subsection{Parity Smith categories}
We are now ready to introduce the theory of parity objects in Smith categories. 
\begin{defn} \label{smpardef}
Suppose $Y \subseteq \mathscr{X}^\varpi$ is a locally closed finite union of $I_\ell^+$-orbits and let $\dagger \in \{ !, * \}$. Consider an ind-scheme $\mathscr{U} \subseteq \mathscr{X}^\varpi$ as above. 
\begin{enumerate}
    \item We say $\mathscr{F} \in \Smith_{\IW}(Y,\bk)$ is \textit{$\dagger$-even}, respectively \textit{$\dagger$-odd},  if for any inclusion
$$j_\alpha^\varpi: \mathscr{X}_\alpha^\varpi \hookrightarrow Y, \quad \alpha \in A_+,$$
the object $(j_\alpha^\varpi)_\Smith^\dagger \mathscr{F}$ decomposes as a direct sum of copies of $\mathscr{L}_\Smith^\mathscr{X}(\alpha)$, respectively $\mathscr{L}_\Smith^\mathscr{X}(\alpha)[1]$. An object is \textit{even}, respectively \textit{odd}, if it is both $!$-even and $*$-even, respectively $!$-odd and $*$-odd.

\item We denote the full subcategory consisting of even objects by $\Smith_{\IW}^0(Y,\bk)$, respectively odd objects by $\Smith_{\IW}^1(Y,\bk)$.
The \textit{parity objects} are defined to lie in the full subcategory $$\Smith_{\IW}^{\parity}(Y,\bk) = \Smith_{\IW}^0(Y,\bk) \oplus \Smith_{\IW}^1(Y,\bk).$$
Taking direct limits over closed $Y \subseteq \mathscr{U}$, we obtain $\Smith_\IW^0(\mathscr{U},\bk)$, $\Smith_\IW^1(\mathscr{U},\bk)$, and $\Smith_\IW^{\parity}(\mathscr{U},\bk)$.
\end{enumerate}
\end{defn}

The next proposition records some of the main features of parity Smith categories established in \cite[\textsection 7]{st}. Note that for $\mathscr{F} \in \Smith_\IW(Y,\bk)$, we take its \textit{support} $\supp \mathscr{F}$ to be the closure of 
$$\bigcup \{ (\mathscr{X}_\alpha^+)^\varpi \subseteq Y: \text{$(j_\alpha^\varpi)_\Smith^* \mathscr{F} \ne 0$ or $(j_\alpha^\varpi)_\Smith^! \mathscr{F} \ne 0$} \}.$$

\begin{prop} \label{nice}
Assume that $\mathscr{X}$ has nice orbits and that $Y \subseteq \mathscr{X}^\varpi$ is a locally closed finite union of orbits.
\begin{enumerate}
    \item All of the Smith categories mentioned in Definition \ref{smpardef}(2) are Krull--Schmidt.
    \item If $\mathscr{F} \in \Smith_{\IW}(Y,\bk)$ is $*$-even and $\mathscr{G} \in \Smith_{\IW}(Y,\bk)$ is $!$-odd, then 
    $$\Hom_{\Smith_{\IW}(Y,\bk)}(\mathscr{F},\mathscr{G}) = 0.$$
     \item If $Z \subseteq Y$ is an open union of $I_\ell^+$-orbits, then indecomposable parity Smith objects on $Y$ are either indecomposable or zero upon restriction to $Z$.
     \item If $\mathscr{F} \in \Smith_{\IW}^{\parity}(Y,\bk)$ is indecomposable, there is exactly one $\alpha \in A_+$ such that
     $(\mathscr{X}_\alpha^+)^\varpi$ is open in $\supp \mathscr{F}$. Conversely, given $\alpha \in A_+$, there is up to isomorphism at most one indecomposable even object $\mathscr{F}$ (resp. odd object $\mathscr{F}[1]$) in $\Smith_{\IW}^{\parity}(Y,\bk)$ containing $(\mathscr{X}_\alpha^+)^\varpi$ as an open subset of its support and restricting to $\mathscr{L}_\Smith^\mathscr{X}(\alpha)$ (resp. to $\mathscr{L}_\Smith^\mathscr{X}(\alpha)[1]$).
\end{enumerate}
\end{prop}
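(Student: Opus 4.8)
The plan is to establish the four assertions in the order (2), (1), (3), (4), following closely the treatment of parity complexes in \cite[\textsection 7]{st} (itself modelled on \cite[\textsection 2]{jmw}); neither the Iwahori--Whittaker Smith framework nor the presence of the thin flag variety $\Fl_\ell$ inside $\mathscr{X}^\varpi$ affects these formal arguments, so the work is mostly transcription once the right inputs are isolated. For (2) I would induct on the number of $I^+_\ell$-orbits in $Y$. The base case $Y=(\mathscr{X}_\alpha^+)^\varpi$ a single nice orbit is immediate: there $*$-even objects are sums of copies of $\mathscr{L}_\Smith^\mathscr{X}(\alpha)$ and $!$-odd objects are sums of copies of $\mathscr{L}_\Smith^\mathscr{X}(\alpha)[1]$, and the desired vanishing is the odd-degree case of the preceding Proposition. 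For the inductive step, I would pick an open orbit $j\colon U=(\mathscr{X}_\alpha^+)^\varpi\hookrightarrow Y$ with closed complement $i\colon Z\hookrightarrow Y$, apply $\Hom(\mathscr{F},-)$ to the Smith recollement triangle $i_*^\Smith i^!_\Smith\mathscr{G}\to\mathscr{G}\to j_*^\Smith j^*_\Smith\mathscr{G}\triright$ (obtained by pushing the usual recollement triangle through $\mathsf{Q}_Y$ and using the compatibilities in Proposition \ref{summary}(1),(4)), and use the adjunctions of Proposition \ref{summary}(1) to identify the flanking terms with $\Hom(i^*_\Smith\mathscr{F},i^!_\Smith\mathscr{G})$ (zero by induction, as $i^*_\Smith\mathscr{F}$ is $*$-even and $i^!_\Smith\mathscr{G}$ is $!$-odd on $Z$) and $\Hom(j^*_\Smith\mathscr{F},j^*_\Smith\mathscr{G})$ (zero by the base case, since $j$ is an open immersion so $j^*_\Smith\cong j^!_\Smith$ on $U$).

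Granting (2), for (1) I would note first that the preceding Proposition makes all Hom spaces in $\Smith_\IW(Y,\bk)$ finite-dimensional over $\bk$, so the endomorphism rings of parity objects are finite-dimensional, hence semiperfect, $\bk$-algebras; by the criteria of \cite{krause} it then remains only to split idempotents in $\Smith_\IW^\parity(Y,\bk)$. By (2) the Hom groups between an even and an odd object vanish in both directions, so $\Smith_\IW^\parity$ is the product (not merely the sum) of $\Smith_\IW^0$ and $\Smith_\IW^1$, and it suffices to split an idempotent $e$ of a $*$-even object $\mathscr{F}$; I would do this by induction on the number of orbits, since $e$ splits after $j^*_\Smith$ (the open orbit carries endomorphism ring $M_n(\bk)$ by the preceding Proposition) and, by induction, after $i^*_\Smith$, and these splittings are compatible with the connecting map of the recollement triangle and glue, the resulting summands remaining $*$-even on each orbit. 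The isomorphism $\id\cong[2]$ of Proposition \ref{summary}(5) is used throughout to handle the $\mathbb{Z}/2$-graded notions ``even/odd'' without any bookkeeping of integer shifts. For (3): after a shift assume $\mathscr{F}$ even; for an open $j\colon Z\hookrightarrow Y$ with closed complement $i\colon W\hookrightarrow Y$, applying $\Hom(\mathscr{F},-)$ to $i_*^\Smith i^!_\Smith\mathscr{F}\to\mathscr{F}\to j_*^\Smith j^*_\Smith\mathscr{F}\triright$ shows the cokernel of the restriction ring map $\End(\mathscr{F})\to\End(j^*_\Smith\mathscr{F})$ embeds into $\Hom(i^*_\Smith\mathscr{F},i^!_\Smith\mathscr{F}[1])=0$ (by (2)); hence $\End(j^*_\Smith\mathscr{F})$ is a quotient of the local ring $\End(\mathscr{F})$ (local by (1)), so it is local or zero, i.e.\ $j^*_\Smith\mathscr{F}$ is indecomposable or zero.

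For (4), existence of an open orbit in $\supp\mathscr{F}$ is clear (take a maximal one), and if there were two distinct such orbits their union would be open in $\supp\mathscr{F}$ and a disjoint union of the two (each also closed in it), making $\mathscr{F}$ restrict there to a sum of two nonzero objects, contradicting (3). For the converse uniqueness, given two indecomposable even objects $\mathscr{F},\mathscr{F}'$ with $(\mathscr{X}_\alpha^+)^\varpi$ open in their supports and restricting to $\mathscr{L}_\Smith^\mathscr{X}(\alpha)$ there, I would work over $Y'=\supp\mathscr{F}\cup\supp\mathscr{F}'$ (on which that orbit is still open), use the surjectivity from (3) to lift the chosen isomorphisms $j^*_\Smith\mathscr{F}\cong\mathscr{L}_\Smith^\mathscr{X}(\alpha)\cong j^*_\Smith\mathscr{F}'$ and their inverses to maps $\psi\colon\mathscr{F}\to\mathscr{F}'$ and $\chi\colon\mathscr{F}'\to\mathscr{F}$, and observe that since $\End(\mathscr{F})$ and $\End(\mathscr{F}')$ are local with residue field $\End(\mathscr{L}_\Smith^\mathscr{X}(\alpha))=\bk$ and $j^*_\Smith(\chi\psi)=\id=j^*_\Smith(\psi\chi)$, the compositions $\chi\psi$ and $\psi\chi$ are units, so $\psi$ is an isomorphism; the odd case follows by shifting.

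I expect the main obstacle to be the idempotent-splitting inside (1): because $\Smith_\IW(Y,\bk)$ is a Verdier quotient it is not idempotent-complete on formal grounds, so one genuinely has to run the dévissage along the stratification within the parity subcategory rather than quoting Karoubianness of an ambient derived category, as one would in the classical parity-sheaf setting. Everything else — the adjunctions, base change, duality, and the $\id\cong[2]$ isomorphism used above — is already recorded in Proposition \ref{summary}, so once (1) is in place the remaining steps are routine.
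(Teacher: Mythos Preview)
Your proposal is correct and matches the paper's approach: the paper does not give an independent proof but simply records these statements as established in \cite[\S 7]{st}, and your sketch faithfully reproduces those arguments (which in turn follow the template of \cite[\S 2]{jmw}). You have also correctly isolated the idempotent-splitting in (1) as the one point where the Smith setting genuinely differs from the classical parity-sheaf argument.
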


\subsection{Constructing the action} \label{s:cons}
We need to recall two more results in preparation, for which we specialise to the case $\mathscr{X} = \Gr$.

Firstly, recall that if $\kappa, \nu \in \textbf{X}_{+\!+}^\vee$ are such that the $I^+$-orbits $\Gr_{\kappa}^+$, $\Gr_\nu^+$ belong to the same connected component of $\Gr$, then their dimensions $d_\kappa$, $d_\nu$ are of the same parity, which we denote $$\natural(\kappa) = \natural(\nu) \in \mathbb{F}_2.$$ Following \cite[\textsection 7.3]{st}, we define $\Smith^\natural_{\IW}(\Gr^\varpi,\bk)$ to be the full subcategory of $\Smith_{\IW}(\Gr^\varpi,\bk)$ generated by objects whose restriction to $\Gr_{(\nu)}$ is even, respectively odd, if $\natural(\nu) = 0$, respectively $\natural(\nu) = 1$. We then have \cite[Theorem 7.4]{st} a diagram of categories, commuting up to natural isomorphism,
\begin{equation} \label{crux}
\begin{tikzcd}
\Perv_{\IW}(\Gr,\bk) \arrow{r}{\cong} & \Perv_{\IW,\mathbb{G}_m}(\Gr,\bk) \arrow{r}{i_{\Gr}^{!*}} & \Smith_{\IW}(\Gr^\varpi,\bk) \\
\Tilt_{\IW}(\Gr,\bk) \arrow{r}{\cong} \arrow[hookrightarrow]{u} & \Tilt_{\IW,\mathbb{G}_m}(\Gr,\bk) \arrow[hookrightarrow]{u} \arrow{r}{\cong} & \Smith_{\IW}^\natural(\Gr^\varpi,\bk). \arrow[hookrightarrow]{u}
\end{tikzcd}
\end{equation}
The equivalences in the first square are the inverse of $\For_{\mathbb{G}_m}$ and its restriction; we denote the composite equivalence along the bottom row by $i^\natural$. The existence of this diagram, and particularly the equivalence $i^\natural$, is a central and ``miraculous'' result in \cite{st}. It implies the existence of the (even or odd) indecomposable objects 
$$\mathscr{E}_{\Smith}^{\Gr}(\nu) = i^{!*}(\mathscr{E}^\Gr_{\IW,\mathbb{G}_m}(\nu)) \in \Smith^\natural(\Gr^\varpi,\bk), \quad \nu \in \textbf{X}_{+\!+}^\vee.$$

Secondly, as stated in \cite[\textsection 7.4]{st}, the functor $\mathsf{Q}_{\Gr}$ preserves parity objects. The proof of \cite[Proposition 7.6]{st} shows that if $\mathscr{E}, \mathscr{F} \in \Parity_{\IW,\mathbb{G}_m}(\Gr^\varpi,\bk)$ have the same parity, then there are canonical isomorphisms
$$\Hom_{\Smith_{\IW}(\Gr^\varpi,\bk)}(\mathsf{Q}_{\Gr}(\mathscr{E}),\mathsf{Q}_{\Gr}(\mathscr{F})) \cong \bk' \otimes_{H_{\mathbb{G}_m}^\bullet(\pt,\bk)} \Hom_{D_{\IW,\mathbb{G}_m}^b(\Gr^\varpi,\bk)}^\bullet(\mathscr{E}, \mathscr{F})$$
compatible with composition of morphisms in each category, where $\bk'$ denotes $\bk$ viewed as a $H_{\mathbb{G}_m}^\bullet(\pt,\bk)$-module under the map 
$$H_{\mathbb{G}_m}^\bullet(\pt,\bk) \cong \bk[x] \mapsto \bk, \quad x \mapsto 1.$$
Moreover, $\mathsf{Q}_{\Gr}$ is shown to preserve indecomposability of parity objects. Since the inclusions $$j_{(\lambda)}: \Gr_{(\lambda)} \hookrightarrow \Gr, \quad \lambda \in -\overline{\textbf{a}_\ell} \cap \textbf{X}^\vee,$$ induce fully faithful pushforward functors, for every such $\lambda$ and every $\mathscr{E}, \mathscr{F}$ in $\Parity_{\IW,\mathbb{G}_m}(\Gr_{(\lambda)},\bk)$ of the same parity, there are canonical isomorphisms
\begin{equation} \label{quothom}
\Hom_{\Smith_{\IW}(\Gr_{(\lambda)},\bk)}(\mathsf{Q}_{\Gr_{(\lambda)}}(\mathscr{E}),\mathsf{Q}_{\Gr_{(\lambda)}}(\mathscr{F})) \cong \bk' \otimes_{H_{\mathbb{G}_m}^\bullet(\pt,\bk)} \Hom^\bullet(\mathscr{E}, \mathscr{F}).
\end{equation}
Recall from \textsection \ref{s:flag} that for $\lambda = \lambda_0 + \rho^\vee$ and $\mu = \mu_s + \rho^\vee$, we have 
\begin{equation} \label{ident}
\Gr_{(\lambda)} = \Fl_\ell \cong \Fl, \quad \Gr_{(\mu)} = \Fl_\ell^s \cong \Fl^s;
\end{equation}
we fix these identifications for what follows.
\begin{lem} \label{parfact}
The functor $\mathsf{Q} = \mathsf{Q}_{\Fl}: D_{\IW,\mathbb{G}_m}^b(\Fl,\bk) \to \Smith_{\IW}(\Fl,\bk)$ restricts to a functor
$$\Parity_{\IW,\mathbb{G}_m}(\Fl,\bk) \to \Smith_{\IW}^\parity(\Fl,\bk),$$
preserving the parity of even and odd objects.
\end{lem}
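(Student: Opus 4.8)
The plan is to test the parity conditions one stratum at a time, exploiting that the Smith quotient functor is compatible with $\ast$- and $!$-restriction to strata (Proposition \ref{summary}(1)) and with the $2$-periodicity of the Smith category (Proposition \ref{summary}(5)).

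First I would record the two general facts that do all the work. Under the identification \eqref{ident}, $\Fl = \Fl_\ell = \Gr_{(\lambda)}$ carries the trivial $\varpi$-action; consequently, for each affine Schubert cell $j_w \colon \Fl_w \hookrightarrow \Fl$ with $w \in {}^{\mathrm{f}}W$ one has $\Fl_w^\varpi = \Fl_w$, the Smith restriction functor $i_{\Fl_w}^{!*}$ coincides with $\mathsf{Q}_{\Fl_w}$, and in particular $\mathscr{L}_\Smith^\Fl(w) = \mathsf{Q}_{\Fl_w}(\mathscr{L}_{\text{AS}}^\Fl(w))$. Moreover, applying Proposition \ref{summary}(1) to the (quasi-separated) immersion $j_w$ yields natural isomorphisms $(j_w)^\dagger_\Smith \circ \mathsf{Q} \cong \mathsf{Q}_{\Fl_w} \circ j_w^\dagger$ for $\dagger \in \{ !, \ast \}$. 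A routine reduction — every object of $\Parity_{\IW,\Gm}(\Fl,\bk)$ is pushed forward from some closed finite union of strata, and $\mathsf{Q}$ together with all the Smith restriction functors is compatible with the direct limits defining $\Fl$ — lets me carry out the verification inside a fixed closed finite $Y \subseteq \Fl$.

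Now I would argue as follows. By additivity of $\mathsf{Q}$ and the decomposition of any parity complex as the direct sum of an even object and an odd object (general theory of parity complexes, \cite{jmw}), it suffices to show that $\mathsf{Q}$ sends even objects to even objects and odd objects to odd objects. Let $\mathscr{E}$ be even. For each $w \in {}^{\mathrm{f}}W$ the complexes $j_w^\ast \mathscr{E}$ and $j_w^! \mathscr{E}$ are, by definition, finite direct sums of even shifts of $\mathscr{L}_{\text{AS}}^\Fl(w)$. Applying $\mathsf{Q}_{\Fl_w}$ — which is additive and triangulated — and combining the commuting squares above with the $2$-periodicity $[2] \cong \id$ on $\Smith_{\IW}(\Fl_w,\bk)$ (Proposition \ref{summary}(5)), I conclude that $(j_w)^\ast_\Smith \mathsf{Q}(\mathscr{E})$ and $(j_w)^!_\Smith \mathsf{Q}(\mathscr{E})$ are direct sums of copies of $\mathscr{L}_\Smith^\Fl(w)$. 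Since $w$ was an arbitrary element of $A_+$, this is exactly the assertion that $\mathsf{Q}(\mathscr{E})$ is even. The odd case is word-for-word the same with ``even shifts'' replaced by ``odd shifts'' and $\mathscr{L}_\Smith^\Fl(w)$ by $\mathscr{L}_\Smith^\Fl(w)[1]$. Passing back to the direct limit over $Y$ then finishes the argument.

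I do not anticipate a substantial obstacle: the lemma is essentially a formal consequence of Proposition \ref{summary}. The points demanding care are bookkeeping ones — invoking \eqref{ident} correctly so that $i_{\Fl_w}^{!*}$ really is $\mathsf{Q}_{\Fl_w}$ and $\mathscr{L}_\Smith^\Fl(w)$ really is the Smith image of the Artin--Schreier local system, and checking the direct-limit compatibilities that license a stratum-by-stratum argument. As a sanity check one may also note that, the strata $\Fl_w$ $(w \in {}^{\mathrm{f}}W)$ being nice, $\mathscr{L}_\Smith^\Fl(w) \not\cong \mathscr{L}_\Smith^\Fl(w)[1]$ by the morphism-space computations of \cite[\textsection 6--7]{st}, so the even/odd dichotomy is genuinely preserved rather than collapsed.
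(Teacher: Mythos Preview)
Your proposal is correct and follows essentially the same approach as the paper's proof: both reduce to checking that $(j_w)^\dagger_\Smith \circ \mathsf{Q} \cong \mathsf{Q}_{\Fl_w} \circ j_w^\dagger$ (Proposition~\ref{summary}(1)) sends the stratum restrictions of a parity object, which are sums of shifts of $\mathscr{L}_{\text{AS}}^\Fl(w)$, to sums of copies of the corresponding Smith local system. The paper phrases the bookkeeping via the ambient $\Gr$, explicitly identifying $\Fl_x$ with $(\Gr_{x \square_\ell \lambda}^+)^\varpi$ and $\mathscr{L}_{\text{AS}}^\Fl(x)$ with $i_{x \square_\ell \lambda}^* \mathscr{L}_{\text{AS}}^\Gr(x \square_\ell \lambda)$ so as to match Definition~\ref{smpardef} literally, whereas you absorb this into the notation $\mathscr{L}_\Smith^\Fl(w)$; conversely, you are more explicit than the paper in invoking the $2$-periodicity $[2] \cong \id$ to pass from ``even shifts'' to ``copies''.
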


\begin{proof}
Up to shift, an indecomposable object in $\Parity_{\IW,\mathbb{G}_m}(\Fl,\bk)$ has the form $\mathscr{F} = \mathscr{E}_{\IW,\mathbb{G}_m}(w)$, where $w \in {^\text{f}}W$. Here the $I^+$-orbit $\Fl_x$ associated to $x \in {^\text{f}}W$ supports a nonzero Iwahori--Whittaker local system, and corresponds to the $I_\ell^+$-orbit $(\Gr_{x \square_\ell \lambda}^+)^\varpi$ under \eqref{ident}. Now
$$(j_x^\varpi)_\Smith^\dagger \mathsf{Q}\mathscr{F} = \mathsf{Q}(j_x^\varpi)^\dagger \mathscr{F},$$
where by assumption $(j_x^\varpi)^\dagger \mathscr{F}$ is a direct sum of graded shifts of copies of $\mathscr{L}_{\text{AS}}^{\Fl}(x)$. In view of the relevant uniqueness statement, we have that $$\mathscr{L}_{\text{AS}}^{\Fl}(x) \cong i_{x \square_\ell \lambda}^* \mathscr{L}_{\text{AS}}^{\Gr}(x \square_\ell \lambda),$$
so $\mathsf{Q} (j_x^\varpi)^\dagger \mathscr{F}$ is a direct sum of graded shifts of copies of $\mathscr{L}_{\Smith}^\Gr(x \square_\ell \lambda)$. Thus we see $\mathsf{Q}$ restricts as described, preserving evenness and oddness. 
\end{proof}

\begin{prop}
The category $\Smith_{\IW}^\parity(\Fl,\bk)$ admits a graded right action of $\mathscr{H}$, such that $\mathsf{Q} = \mathsf{Q}_{\Fl}$ is a graded right $\mathscr{H}$-module functor.
\end{prop}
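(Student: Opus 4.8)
The plan is to transport the graded right $\mathscr{H}$-action of Corollary \ref{gract} along $\mathsf{Q} = \mathsf{Q}_{\Fl}$, using that $\mathsf{Q}$ presents $\Smith_{\IW}^{\parity}(\Fl,\bk)$ as a quotient of $\Parity_{\IW,\Gm}(\Fl,\bk)$ and that, by Proposition \ref{summary}, ``Smith localisation commutes with everything''. Recall that under Theorem \ref{firsteq} the generator $B_s \in \mathscr{H}$ corresponds to the indecomposable parity sheaf $\mathscr{E}(s) \in \Parity_{I \rtimes \Gm}(\Fl,\bk)$, and acts on $\Parity_{\IW,\Gm}(\Fl,\bk)$ by convolution $\mathscr{F} \mapsto \mathscr{F} \star \mathscr{E}(s)$; as in \cite{rw}, this convolution is computed through the $\mathbb{P}^{1}$-bundle $q^s \colon \Fl \to \Fl^s$ as the push--pull functor $(q^s)^{*}(q^s)_{*}$ up to a grading shift (with $(q^s)_{*} = (q^s)_{!}$ since $q^s$ is proper), so that the action functor of a general Bott--Samelson object is a composite of such functors, and every $F_B$ ($B \in \mathscr{H}$) is, up to shift, a direct summand of such a composite.

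The first and main step is to show that the functor $\mathsf{Q}^{\parity} \colon \Parity_{\IW,\Gm}(\Fl,\bk) \to \Smith_{\IW}^{\parity}(\Fl,\bk)$ of Lemma \ref{parfact} is full and essentially surjective, so that it identifies $\Smith_{\IW}^{\parity}(\Fl,\bk)$ with the quotient $\Parity_{\IW,\Gm}(\Fl,\bk)/\mathcal{I}$ by the two-sided ideal $\mathcal{I}$ of morphisms it kills. Fullness, with gradings matched through the isomorphism $e_Y \colon \id \cong [2]$ of Proposition \ref{summary}(5), is exactly the morphism-space identification \eqref{quothom} applied to the component $\Gr_{(\lambda)} \cong \Fl$ of \eqref{ident}, together with the vanishing of $\Hom$ between objects of opposite parity on either side (Proposition \ref{nice}(2)). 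Essential surjectivity follows from the classification in Proposition \ref{nice}(4): by Theorem \ref{secequiv} every object of $\Parity_{\IW,\Gm}(\Fl,\bk)$ is a direct sum of shifts of the parity sheaves $\mathscr{E}_{\IW,\Gm}(w)$, $w \in {}^{\mathrm{f}}W$, and --- adapting the indecomposability statements of \cite[\textsection 7]{st} from $\Gr^{\varpi}$ to its component $\Fl_{\ell} \cong \Fl$ --- each $\mathsf{Q}(\mathscr{E}_{\IW,\Gm}(w))$ is an indecomposable parity Smith object whose support has open stratum the $I_{\ell}^{+}$-orbit corresponding to $\Fl_{w}$; since $\Smith_{\IW}^{\parity}(\Fl,\bk)$ is Krull--Schmidt (Proposition \ref{nice}(1)), these objects and their shifts account for all of it.

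Next I would verify that the $\mathscr{H}$-action descends along $\mathsf{Q}$, i.e.\ that every action functor preserves $\mathcal{I}$. By Proposition \ref{summary}(1), the Smith functors $(q^s)_\Smith^*$ and $(q^s)_*^\Smith$ constructed just after it satisfy $\mathsf{Q} \circ (q^s)^{*} \cong (q^s)_\Smith^* \circ \mathsf{Q}$ and $\mathsf{Q} \circ (q^s)_{*} \cong (q^s)_*^\Smith \circ \mathsf{Q}$ (with $\mathsf{Q}$ denoting the relevant Smith quotient functor on $\Fl$ or $\Fl^s$); composing, the functor $F_{B_s}$ giving the $B_s$-action satisfies $\mathsf{Q} \circ F_{B_s} \cong \overline{F}_{B_s} \circ \mathsf{Q}$ with $\overline{F}_{B_s} = (q^s)_\Smith^*(q^s)_*^\Smith$ up to shift. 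This forces $F_{B_s}(\mathcal{I}) \subseteq \mathcal{I}$, and --- using essential surjectivity of $\mathsf{Q}^{\parity}$ together with the fact that convolution with $\mathscr{E}(s)$ preserves parity objects --- shows that $\overline{F}_{B_s}$ preserves $\Smith_{\IW}^{\parity}(\Fl,\bk)$. Since $\mathcal{I}$, being the kernel of the additive functor $\mathsf{Q}$, is an ideal stable under direct summands, and every $F_B$ is a direct summand of a composite of the functors $F_{B_s}$, it follows that $F_B(\mathcal{I}) \subseteq \mathcal{I}$ for all $B \in \mathscr{H}$.

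It remains only to descend the module structure, which is now formal: a right $\mathscr{H}$-module structure on $\Parity_{\IW,\Gm}(\Fl,\bk)$ whose action functors all preserve a two-sided ideal $\mathcal{I}$ descends uniquely to $\Parity_{\IW,\Gm}(\Fl,\bk)/\mathcal{I} \simeq \Smith_{\IW}^{\parity}(\Fl,\bk)$, because the natural transformations attached to morphisms of $\mathscr{H}$ and the coherence isomorphisms $F_{B \star B'} \cong F_{B'} \circ F_B$ are transformations between $\mathcal{I}$-preserving functors and hence pass to the quotient automatically. All functors in sight commute with the shift $[1]$, which corresponds to $\langle 1 \rangle$ in $\mathscr{H}$, so the resulting action is graded; and the descent isomorphisms $\mathsf{Q} \circ F_B \cong \overline{F}_B \circ \mathsf{Q}$ of the previous step are exactly the coherence data exhibiting $\mathsf{Q} = \mathsf{Q}_{\Fl}$ as a graded right $\mathscr{H}$-module functor. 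I expect the crux to be the first step --- establishing that $\mathsf{Q}^{\parity}$ is full and essentially surjective, hence that $\Smith_{\IW}^{\parity}(\Fl,\bk)$ genuinely is a quotient of $\Parity_{\IW,\Gm}(\Fl,\bk)$ --- which draws on the morphism-space computation \eqref{quothom} and on transplanting the indecomposability results of \cite[\textsection 7]{st} from $\Gr^{\varpi}$ to its component $\Fl$; granted this, the descent of the action is a formality, reflecting the principle that Smith localisation commutes with the push--pull functors out of which the $\mathscr{H}$-action is assembled.
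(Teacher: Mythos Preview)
Your overall strategy is close to the paper's, but there is a genuine gap in the first step: the functor $\mathsf{Q}^{\parity}\colon \Parity_{\IW,\Gm}(\Fl,\bk)\to \Smith_{\IW}^{\parity}(\Fl,\bk)$ is \emph{not} full, so $\Smith_{\IW}^{\parity}(\Fl,\bk)$ is not realised as $\Parity_{\IW,\Gm}(\Fl,\bk)/\mathcal{I}$ for an ideal $\mathcal{I}$ of morphisms. Indeed, \eqref{quothom} says
\[
\Hom_{\Smith}(\mathsf{Q}\mathscr{E},\mathsf{Q}\mathscr{F})\;\cong\;\bk'\otimes_{H_{\Gm}^\bullet(\pt,\bk)}\bigoplus_n\Hom^{2n}(\mathscr{E},\mathscr{F}),
\]
so the target receives contributions from \emph{all} even cohomological degrees, collapsed via $x\mapsto 1$. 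The map induced by $\mathsf{Q}$ on Hom spaces sees only the degree-$0$ piece $\Hom^0(\mathscr{E},\mathscr{F})$; whenever the free $\bk[x]$-module $\bigoplus_n\Hom^{2n}$ has generators in strictly positive degree (as it typically does for nontrivial parity sheaves), this map fails to be surjective. Your parenthetical ``with gradings matched through $e_Y$'' is pointing at the right fix, but it is not something the functor $\mathsf{Q}$ itself does; one has to build it in by hand.

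This is exactly what the paper does. It introduces an intermediate category $\widetilde{\Parity}(\Fl,\bk)$ with the same objects as $\Parity_{\IW,\Gm}(\Fl,\bk)$ but with Hom spaces $\bk'\otimes_{H_{\Gm}^\bullet(\pt,\bk)}\bigoplus_n\Hom^{2n}(\mathscr{E},\mathscr{F})$, so that $\mathsf{Q}$ factors as $\widetilde{\mathsf{Q}}\circ\mathsf{P}$ with $\widetilde{\mathsf{Q}}$ an \emph{equivalence} onto $\Smith_{\IW}^{\parity}(\Fl,\bk)$. The graded $\mathscr{H}$-action on $\Parity_{\IW,\Gm}(\Fl,\bk)$ manifestly descends to $\widetilde{\Parity}(\Fl,\bk)$ (because it respects the $H_{\Gm}^\bullet(\pt,\bk)$-module structure on graded Homs), and one then transports it along $\widetilde{\mathsf{Q}}$. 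Your ``$\mathsf{Q}\circ F_{B_s}\cong\overline{F}_{B_s}\circ\mathsf{Q}$'' observation is correct and is used by the paper too, but only \emph{after} the action exists, in Proposition~\ref{bsact}, to identify the resulting action of $B_s$ with the Smith push--pull; it does not by itself produce the coherence data on the Smith side, precisely because of the missing fullness.
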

\begin{proof}
Let $\widetilde{\Parity}(\Fl,\bk)$ be the category whose objects are those of $\Parity_{\IW,\mathbb{G}_m}(\Fl,\bk)$, with Hom spaces
$$\Hom_{\widetilde{\Parity}(\Fl,\bk)}(\mathscr{E},\mathscr{F}) = \bk' \otimes_{H_{\mathbb{G}_m}^\bullet(\pt,\bk)} \bigoplus_n \Hom_{\Parity_{\IW,\mathbb{G}_m}(\Fl,\bk)}^{2n}(\mathscr{E}, \mathscr{F}).$$
Note $\widetilde{\Parity}(\Fl,\bk)$ is naturally equipped with an autoequivalence $[1]$ whose square is naturally isomorphic to the identity functor. Combining Proposition \ref{nice}(1) and (2), \eqref{quothom}, and Lemma \ref{parfact}, we see that $\mathsf{Q}$ factors on the parity subcategory as follows:
\begin{equation} \label{tri}
\begin{tikzcd}
\Parity_{\IW,\mathbb{G}_m}(\Fl,\bk) \arrow{r}{\mathsf{P}} \arrow{dr}[swap]{\mathsf{Q}} & \widetilde{\Parity}(\Fl,\bk) \arrow{d}{\widetilde{\mathsf{Q}}} \\
& \Smith_{\IW}^\parity(\Fl,\bk). 
\end{tikzcd}
\end{equation}
Here $\mathsf{P}$ is trivial on objects and maps morphisms $f$ to simple tensors $1 \otimes f$, while $\widetilde{\mathsf{Q}}$ is essentially surjective and fully faithful, so an equivalence of (graded) categories. By Corollary \ref{gract}, there is a graded right action of $\mathscr{H}$ on $\Parity_{\IW,\mathbb{G}_m}(\Fl,\bk)$ descending to $\widetilde{\Parity}(\Fl,\bk)$; this implies the claim, by transport of structure along $\widetilde{\mathsf{Q}}$.
\end{proof}

Some notation from Prop. \ref{summary}(4): let $e = e_{\Fl}$ and $e^s = e_{\Fl^s}$, and let $e^n$ and $e^{s,n}$ denote the natural isomorphisms $\text{id} \cong [2n]$ induced by $e$ and $e^s$ for $n \ge 1$. For future reference, let us note that if $\varphi: \mathscr{E} \to \mathscr{F}[2n]$ is a morphism between parity objects, then
$$\widetilde{Q}(1 \otimes \varphi) = (e_{\mathsf{Q}\mathscr{F}}^n)^{-1} \mathsf{Q}(\varphi).$$
The functor $\mathsf{Q}^s = \mathsf{Q}_{\Fl^s}$ also respects parity categories and factors through a similar equivalence $\widetilde{\mathsf{Q}^s}: \widetilde{\Parity}(\Fl^s,\bk) \to \Smith_{\IW}^\parity(\Fl^s,\bk)$.

Our understanding of the $\mathscr{H}$-module structure of $\Smith_{\IW}^\parity(\Fl,\bk)$ will hinge on the action of objects $B_s \in \mathscr{H}$, for $s \in S$.
In view of Corollary \ref{gract}, $\mathsf{Av}$ is a right $\mathscr{H}$-module functor, so that we have
\begin{align}
((-) \cdot B_s) \circ \mathsf{Av} &= \mathsf{Av} \circ ((-) \star \mathscr{E}(s)) \nonumber \\
&= \mathsf{Av} \circ (q^s)^* (q^s)_*[1] \label{pp} \\
&= (q^s)^* (q^s)_*[1] \circ \mathsf{Av}; \label{avcom}
\end{align}
here in \eqref{pp} we rely on \cite[Lemma 9.6]{rw} and in \eqref{avcom} we use that $\mathsf{Av}$ commutes with the functor $(q^s)^* (q^s)_*$ (see the proof of \cite[Corollary 11.5]{rw}). But $\mathsf{Av}$ is a quotient functor, so we conclude that
$$(-) \cdot B_s = (q^s)^* (q^s)_*[1],$$
i.e. that $B_s$ acts on $\Parity_{\IW,\mathbb{G}_m}(\Fl,\bk)$ through the push--pull composite $(q^s)^* (q^s)_*[1]$. 

Now, $(q^s)_*$ and $(q^s)^* \cong (q^s)^![-2]$ respect parity objects, so $(q^s)_*^\Smith$ and $(q^s)_\Smith^*$ restrict to functors between the parity Smith categories associated to $\Fl$ and $\Fl^s$. Since they respect gradings, $(q^s)_*$ and $(q^s)^*$ also induce functors
$$\widetilde{(q^s)_*}: \widetilde{\Parity}(\Fl,\bk) \to \widetilde{\Parity}(\Fl^s,\bk), \quad \widetilde{(q^s)^*}: \widetilde{\Parity}(\Fl^s,\bk) \to \widetilde{\Parity}(\Fl,\bk).$$
\begin{prop} \label{bsact}
The object $B_s \in \mathscr{H}$ acts on $\Smith_{\IW}^\parity(\Fl,\bk)$ by the endofunctor $(q^s)_\Smith^* (q^s)_*^\Smith[1]$.
\end{prop}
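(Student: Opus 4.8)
The plan is to reduce the proposition to the compatibility of the Smith quotient functors with $(q^s)_*$ and $(q^s)^*$ (Proposition~\ref{summary}(1)), transported along the equivalence $\widetilde{\mathsf{Q}}$ of \eqref{tri}. First I would unwind the construction of the action. By Corollary~\ref{gract}, $\Parity_{\IW,\mathbb{G}_m}(\Fl,\bk)$ carries a graded right $\mathscr{H}$-action in which $B_s$ acts by $(-)\star\mathscr{E}(s)$, which by \cite[Lemma 9.6]{rw} (as used at \eqref{pp}) is isomorphic to $(q^s)^*(q^s)_*[1]$. This action descends to $\widetilde{\Parity}(\Fl,\bk)$, where $B_s$ then acts by the induced functor $\widetilde{(q^s)^*}\circ\widetilde{(q^s)_*}\circ[1]$, and the action on $\Smith_{\IW}^\parity(\Fl,\bk)$ is obtained from this by transport of structure along $\widetilde{\mathsf{Q}}$. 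Thus the $B_s$-action on $\Smith_{\IW}^\parity(\Fl,\bk)$ is isomorphic to $\widetilde{\mathsf{Q}}\circ\widetilde{(q^s)^*}\circ\widetilde{(q^s)_*}\circ[1]\circ\widetilde{\mathsf{Q}}^{-1}$, and it suffices to produce natural isomorphisms
$$\widetilde{\mathsf{Q}^s}\circ\widetilde{(q^s)_*}\;\cong\;(q^s)_*^\Smith\circ\widetilde{\mathsf{Q}},\qquad \widetilde{\mathsf{Q}}\circ\widetilde{(q^s)^*}\;\cong\;(q^s)_\Smith^*\circ\widetilde{\mathsf{Q}^s},\qquad \widetilde{\mathsf{Q}}\circ[1]\;\cong\;[1]\circ\widetilde{\mathsf{Q}};$$
chaining these gives $\widetilde{\mathsf{Q}}\circ\widetilde{(q^s)^*}\widetilde{(q^s)_*}[1]\cong(q^s)_\Smith^*(q^s)_*^\Smith[1]\circ\widetilde{\mathsf{Q}}$, and cancelling $\widetilde{\mathsf{Q}}$ yields the claim.

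For the first two isomorphisms I would invoke Proposition~\ref{summary}(1) with $f=q^s$ (proper, hence quasi-separated) and $\dagger=*$, which gives $\mathsf{Q}_{\Fl^s}\circ(q^s)_*\cong(q^s)_*^\Smith\circ\mathsf{Q}_{\Fl}$ and $\mathsf{Q}_{\Fl}\circ(q^s)^*\cong(q^s)_\Smith^*\circ\mathsf{Q}_{\Fl^s}$ on the ambient equivariant categories. Using the factorisations $\mathsf{Q}_{\Fl}=\widetilde{\mathsf{Q}}\circ\mathsf{P}$ and $\mathsf{Q}_{\Fl^s}=\widetilde{\mathsf{Q}^s}\circ\mathsf{P}^s$ of \eqref{tri}, together with the identities $\mathsf{P}^s\circ(q^s)_*=\widetilde{(q^s)_*}\circ\mathsf{P}$ and $\mathsf{P}\circ(q^s)^*=\widetilde{(q^s)^*}\circ\mathsf{P}^s$ defining the tilded functors, one checks that the two desired isomorphisms hold after precomposition with $\mathsf{P}$, respectively $\mathsf{P}^s$; since these are bijective on objects and $\widetilde{\mathsf{Q}}$, $\widetilde{\mathsf{Q}^s}$ are equivalences, I would then argue the isomorphisms descend to $\widetilde{\Parity}$. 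The third isomorphism is immediate, since $\mathsf{Q}_{\Fl}$ is triangulated and $\widetilde{\mathsf{Q}}$ is built to intertwine the shift with that on the Smith side through the canonical isomorphisms $e_{\mathsf{Q}\mathscr{F}}$ (cf.\ the formula $\widetilde{Q}(1\otimes\varphi)=(e_{\mathsf{Q}\mathscr{F}}^n)^{-1}\mathsf{Q}(\varphi)$ and Lemma~\ref{comm}). Finally I would note that $(q^s)_\Smith^*(q^s)_*^\Smith[1]$ is indeed an endofunctor of $\Smith_{\IW}^\parity(\Fl,\bk)$: by the discussion before the statement, $(q^s)_*$ and $(q^s)^*\cong(q^s)^![-2]$ preserve parity objects, so the associated Smith functors restrict to the parity Smith subcategories, and $[1]$ interchanges $\Smith_{\IW}^0$ and $\Smith_{\IW}^1$.

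The main obstacle I expect is the descent step. On $\widetilde{\Parity}$ the morphism spaces are localisations $\bk'\otimes_{H_{\mathbb{G}_m}^\bullet(\pt,\bk)}(-)$ of graded Hom-spaces, and not every morphism lies in the image of $\mathsf{P}$; indeed $\widetilde{\Parity}$ is generated, as a $\bk'$-linear category, by $\mathsf{P}$-images together with the canonical isomorphisms $\mathscr{F}[2]\xrightarrow{\sim}\mathscr{F}$. So to extend a natural transformation from $\mathsf{P}$-images to all morphisms I must know that the transformations of Proposition~\ref{summary}(1) are $H_{\mathbb{G}_m}^\bullet(\pt,\bk)$-linear and compatible with those shift identifications (equivalently, with the $e$-isomorphisms on the Smith side). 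Both hold because those transformations are assembled from unit and counit maps of adjunctions and from (co)base-change isomorphisms, which are $H_{\mathbb{G}_m}^\bullet$-linear and commute with shifts; Lemma~\ref{comm} is the prototype for the pullback case, and the pushforward case is its mate under adjunction. With this bookkeeping settled, the chain of isomorphisms above goes through verbatim.
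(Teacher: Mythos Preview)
Your approach is correct and essentially the same as the paper's: both reduce the claim to the commutativity (up to natural isomorphism) of the two squares relating $\widetilde{\mathsf{Q}}$, $\widetilde{\mathsf{Q}^s}$ to the push/pull functors, with Lemma~\ref{comm} (compatibility of the $e$-isomorphisms with pullback) as the key technical input. The paper's execution is slightly more direct in two ways: it uses the adjunction $(q^s)^*\dashv(q^s)_*$ at the level of the squares themselves to reduce to checking a single square, and rather than factoring through $\mathsf{P}$ and then arguing a separate descent step, it simply verifies naturality by computing on a generic morphism $1\otimes\varphi$ in $\widetilde{\Parity}$ via the formula $\widetilde{\mathsf{Q}}(1\otimes\varphi)=(e^n)^{-1}\mathsf{Q}(\varphi)$ together with Lemma~\ref{comm}.
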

\begin{proof}
Given the action's construction, we just need to verify that the following two squares commute up to natural isomorphism:
$$\begin{tikzcd}
\widetilde{\Parity}(\Fl,\bk) \arrow{d}{\widetilde{\mathsf{Q}}} \arrow{r}{\widetilde{(q^s)_*}} & \widetilde{\Parity}(\Fl^s,\bk) \arrow{d}{\widetilde{\mathsf{Q}^s}} \\
\Smith_{\IW}^\parity(\Fl,\bk) \arrow{r}{(q^s)_*^\Smith} & \Smith_{\IW}^\parity(\Fl^s,\bk),
\end{tikzcd} \quad \begin{tikzcd}
\widetilde{\Parity}(\Fl^s,\bk) \arrow{d}{\widetilde{\mathsf{Q}^s}} \arrow{r}{\widetilde{(q^s)^*}} & \widetilde{\Parity}(\Fl,\bk) \arrow{d}{\widetilde{\mathsf{Q}}} \\
\Smith_{\IW}^\parity(\Fl^s,\bk) \arrow{r}{(q^s)^*_\Smith} & \Smith_{\IW}^\parity(\Fl,\bk).
\end{tikzcd}
$$
The adjunction $(q^s)^* \dashv (q^s)_*$ yields adjunctions $\widetilde{(q^s)^*} \dashv \widetilde{(q^s)_*}$ and $(q^s)^*_\Smith \dashv (q^s)_*^\Smith$, so by the uniqueness properties of adjoints it suffices to check the case of $(q^s)^*$, i.e. to check that both ways of traversing the right diagram agree. This is clear on objects, so take a morphism $f \in \Hom_{\widetilde{\Parity}(\Fl,\bk)}(\mathscr{E},\mathscr{F})$, which by linearity we may assume to be a simple tensor of the form
$$f = 1 \otimes \varphi, \quad \text{with } \varphi: \mathscr{E} \to \mathscr{F}[2n].$$
By repeated application of Lemma \ref{comm}, we see that
$$(q^s)_\Smith^* e^{s,n} = e^n (q^s)_\Smith^*, \quad \Leftrightarrow \quad (e^n)^{-1} (q^s)_\Smith^* = (q^s)_\Smith^* (e^{s,n})^{-1}.$$
Then
\begin{align*}
    \widetilde{\mathsf{Q}^s}(\widetilde{(q^s)_*}(f)) = \widetilde{\mathsf{Q}^s}(1 \otimes (q^s)_* \varphi) &= \left ( e_{\mathsf{Q}((q^s)_* \mathscr{F})[2n]}^n \right )^{-1} \mathsf{Q}^s((q^s)_* \varphi) \\ 
    &= \left ( e_{\mathsf{Q}((q^s)_* \mathscr{F})[2n]}^n \right )^{-1} (q^s)_*^\Smith \mathsf{Q}(\varphi) \\
    &= (q^s)_*^\Smith \left ( e_{(\mathsf{Q}\mathscr{F})[2n]}^{s,n}\right )^{-1} \mathsf{Q}(\varphi) = (q^s)_*^\Smith \widetilde{\mathsf{Q}}(f),
\end{align*}
as was required to be shown. 
\end{proof}

\section{Bridge to representation theory} \label{sec:bri}
\subsection{Blocks and their functors}
Our goal is now to investigate how the $\mathscr{H}$-action on $\Smith_{\IW}^{\parity}(\Fl,\bk)$ transfers across the equivalence $i^\natural$ from \eqref{crux} and the Iwahori--Whittaker version of the geometric Satake equivalence.

Recall from the proof of \cite[Theorem 8.5]{st} that there is a decomposition of categories 
\begin{equation} \label{tiltdecomp}
\Tilt_{\IW}(\Gr,\bk) = \bigoplus_{\nu \in -\overline{\textbf{a}}_\ell \cap \textbf{X}^\vee} \Tilt_\IW^\nu,
\end{equation}
where $\Tilt_\IW^\nu$ consists of direct sums of the objects $\mathscr{T}_{\IW}(\kappa)$ for $\kappa \in (W \square_\ell \nu) \cap \textbf{X}_{+\!+}^\vee.$ Given \cite[(5.2)]{st}, we have an isomorphism $\mathscr{T}_{\IW}(\kappa) \cong \mathscr{E}_{\IW}^\Gr(\kappa)$ in $D_{\IW}^b(\Gr,\bk)$, so $$i^\natural(\mathscr{T}_{\IW}(\kappa)) = i^{!*}(\mathscr{E}_{\IW,\mathbb{G}_m}^\Gr(\kappa)) = \mathscr{E}_\Smith^\Gr(\kappa)$$
by \cite[(7.1)]{st}; hence $\Smith_\IW^\natural(\Gr^\varpi,\bk)$ decomposes into blocks $\Smith_\IW^\nu$ consisting of the direct sums of objects $\mathscr{E}_\Smith^\Gr(\kappa)$ for $\kappa \in (W \square_\ell \nu) \cap \textbf{X}_{+\!+}^\vee.$ 

Now, notice that
\begin{equation} \label{smcop}
\Smith_{\IW}^\parity(\Gr_{(\nu)},\bk) = \Smith_\IW^\nu \oplus \Smith_\IW^\nu[1];
\end{equation}
in fact, $\Smith_{\IW}^\nu = \Smith_{\IW}^{\natural(\nu)}(\Gr_{(\nu)},\bk)$ by the discussion in \textsection \ref{s:cons}. Under this identification, there is commutative diagram as displayed, with $j_{(\nu)}: \Gr_{(\nu)} \hookrightarrow \Gr^\varpi$:
\begin{equation} \label{prsq}
\begin{tikzcd}
\Smith_{\IW}^\natural(\Gr^\varpi,\bk) \arrow[hookrightarrow]{r} \arrow{d}{\pr_\nu} & \Smith_{\IW}(\Gr^\varpi,\bk) \arrow{d}{(j_{(\nu)})_\Smith^*} \\
\Smith_{\IW}^\nu \arrow[hookrightarrow]{r} & \Smith_{\IW}(\Gr_{(\nu)},\bk). 
\end{tikzcd}
\end{equation}
This is an immediate consequence of the fact that $\Smith_{\IW}^\natural(\Gr^\varpi,\bk)$ is a Krull--Schmidt category whose indecomposable objects are each supported in a single connected component of $\Gr^\varpi$.

At the same time, the Iwahori--Whittaker version of geometric Satake equivalence sends 
$\mathsf{T}(w \bullet_\ell \nu')$ to $\mathscr{T}_{\IW}(w \square_\ell \nu)$, where $w \in W$, $\nu' \in \overline{C}_\ell$, and $\nu = \nu' + \rho^\vee$; see the proof of \cite[Theorem 8.8]{st}. It follows in particular that, for $s \in S$, $\lambda = \lambda_0 + \rho^\vee$, and $\mu = \mu_s + \rho^\vee$,
$$\Tilt_0(\textbf{G}) \cong \Smith_\IW = \Smith_{\IW}^{\lambda}, \quad \Tilt_s(\textbf{G}) \cong \Smith_{\IW}^s = \Smith_{\IW}^{\mu}.$$
The functors $(q^s)^*$ and $(q^s)_* \cong (q^s)^![-2]$ preserve even objects \cite[Lemma 9.6(2)]{rw}, so Lemma \ref{parfact} implies the existence of an endofunctor
$$(q^s)_\Smith^*(q^s)_*^\Smith: \Smith_{\IW} \to \Smith_{\IW}.$$
By de-grading $\Smith_{\IW}^\parity(\Fl,\bk)$, identifying copies of $\Smith_\IW$ as in \eqref{smcop}, we obtain the following corollary to Proposition \ref{bsact}:

\begin{prop}
There is a right action of $\mathscr{H}$ on $\Smith_\IW$, with $B_s \langle n \rangle$ acting by $(q^s)_\Smith^* (q^s)_*^\Smith$.
\end{prop}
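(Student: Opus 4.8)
The statement to be proved is essentially a ``de-grading'' corollary of Proposition \ref{bsact}, so the strategy is to track how the various structures pass through the three identifications: the block decomposition \eqref{tiltdecomp}, the splitting \eqref{smcop} of $\Smith_{\IW}^\parity(\Fl,\bk)$ into two shifted copies of $\Smith_\IW$, and the grading-forgetting functor on the Hecke side (where $B_s\langle n\rangle$ and $B_s$ become identified after de-grading). First I would recall that Proposition \ref{bsact} gives a graded right $\mathscr{H}$-action on $\Smith_{\IW}^\parity(\Fl,\bk)$ in which $B_s$ acts by $(q^s)_\Smith^* (q^s)_*^\Smith[1]$. Under the identification $\Fl \cong \Gr_{(\lambda)}$ from \eqref{ident} and the description $\Smith_{\IW}^\parity(\Gr_{(\nu)},\bk) = \Smith_\IW^\nu \oplus \Smith_\IW^\nu[1]$ of \eqref{smcop}, the subcategory $\Smith_\IW = \Smith_{\IW}^\lambda$ sits inside $\Smith_{\IW}^\parity(\Fl,\bk)$ as the ``$\natural(\lambda)$-parity'' summand. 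The key geometric point, already extracted in Proposition \ref{bsact} and the surrounding discussion, is that $(q^s)^*$ and $(q^s)_* \cong (q^s)^![-2]$ preserve even (resp.\ odd) objects --- this is \cite[Lemma 9.6(2)]{rw} together with Lemma \ref{parfact} --- so $(q^s)_\Smith^* (q^s)_*^\Smith$ restricts to an endofunctor of $\Smith_\IW$, as already noted just before the statement.

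The actual content is then bookkeeping about shifts. On $\Smith_{\IW}^\parity(\Fl,\bk)$ the functor $[1]$ interchanges the even and odd summands, and by Proposition \ref{summary}(5) there is a canonical isomorphism $e_{\Fl}\colon \id \cong [2]$; composing $(q^s)_\Smith^* (q^s)_*^\Smith[1]$ with itself, or rather tracking its effect on the even summand, one sees that on the copy $\Smith_\IW \subseteq \Smith_{\IW}^\parity(\Fl,\bk)$ the shift $[1]$ is absorbed into the passage between the two copies and, upon de-grading (i.e.\ upon identifying $\Smith_\IW$ with this single copy and forgetting the internal grading that distinguishes $B_s\langle n\rangle$ from $B_s$), the residual action of $B_s$ is given precisely by $(q^s)_\Smith^* (q^s)_*^\Smith$ with no shift. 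More carefully: the graded $\mathscr{H}$-action restricts to the even summand because $\mathscr{H}$ is generated by the $B_s$, each of which sends even objects to odd objects and vice versa once composed with $[1]$; so after identifying both $\Smith_\IW$ and $\Smith_\IW[1]$ with $\Smith_\IW$ via $e_{\Fl}$, the action descends to an action on the single de-graded category $\Smith_\IW$, and the generator $B_s\langle n\rangle$ (for the appropriate $n$ matching the parity normalisation) acts by $(q^s)_\Smith^* (q^s)_*^\Smith$. That the $\mathscr{H}$-module axioms survive de-grading is transport of structure, exactly as in the final line of the proof of the proposition preceding Proposition \ref{bsact}.

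I expect the main --- and really only --- obstacle to be \emph{formulating the de-grading precisely}: making explicit the functor ``forget the grading'' from graded $\mathscr{H}$-modules to $\mathscr{H}$-modules and checking it is monoidal and compatible with the $e_{\Fl}$-identification of $\Smith_\IW$ with $\Smith_\IW[1]$, so that the shift $[1]$ in Proposition \ref{bsact} disappears and the precise shift $\langle n\rangle$ on $B_s$ is the one matching the parity convention \eqref{smcop}. Once that is set up, everything else is immediate from Proposition \ref{bsact}, Lemma \ref{parfact}, \cite[Lemma 9.6(2)]{rw}, and the block decompositions \eqref{tiltdecomp}--\eqref{smcop}. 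A clean way to write it: invoke Proposition \ref{bsact}, restrict the graded $\mathscr{H}$-action along the inclusion of the even summand, use $e_{\Fl}$ to identify this summand with $\Smith_\IW$, observe the shift $[1]$ becomes invisible after this identification, and conclude by transport of structure --- noting the normalisation $B_s\langle n\rangle$ records the degree shift introduced by passing from the graded picture to $\Smith_\IW$.
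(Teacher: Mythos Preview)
Your proposal is correct and follows exactly the approach of the paper: the paper states this proposition as an immediate corollary of Proposition~\ref{bsact}, obtained ``by de-grading $\Smith_{\IW}^\parity(\Fl,\bk)$, identifying copies of $\Smith_\IW$ as in \eqref{smcop}'', and gives no further argument. Your write-up supplies the shift bookkeeping (via $e_{\Fl}$ and the parity-preservation of $(q^s)^*$, $(q^s)_*$) that the paper leaves entirely implicit, but the route is the same.
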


We spend much of the remainder of the paper comparing this endofunctor to the wall-crossing functor $$\theta_s = T_s T^s: \Tilt_0(\textbf{G}) \to \Tilt_s(\textbf{G}) \to \Tilt_0(\textbf{G}).$$

\subsection{Translation functors and fixed points on the affine Grassmannian}
Recall that for $\lambda_0, \mu_s$ as above, there is a unique element $\gamma \in \textbf{X}_+^\vee \cap W_{\text{f}}(\mu_s - \lambda_0),$
and we define 
$$T^s: \Tilt_0(\textbf{G}) \to \Tilt_s(\textbf{G}), \quad M \mapsto \pr_{\mu_s}(M \otimes \mathsf{T}(\gamma)).$$
(We could replace $\mathsf{T}(\gamma)$ by any finite-dimensional module with highest weight $\gamma$ and obtain an isomorphic functor; see \cite[Rmk. 7.6]{jan}.) Remembering that convolution affords an action of $D_{L^+ G}^b(\Gr,\bk)$ on $D_{\IW}^b(\Gr,\bk)$ and $D_{\IW,\Gm}^b(\Gr,\bk)$, we see that the geometric Satake equivalence and its variants yield a diagram of categories (commuting up to natural isomorphism), 
\begin{equation} \label{trans}
\begin{tikzcd}[column sep=large]
\Tilt_0(\textbf{G}) \arrow{d}{T^s} \arrow{r}{\cong} & \Tilt_\IW^0 \arrow{d}{\tau_\IW^s} \arrow{r}{\cong} & \Tilt_{\IW,\Gm}^0 \arrow{d}{\tau_{\IW,\Gm}^s} \\
\Tilt_s(\textbf{G}) \arrow{r}{\cong} & \Tilt_\IW^s \arrow{r}{\cong} & \Tilt_{\IW,\Gm}^s,
\end{tikzcd}
\end{equation}
with $\tau_\IW^s = \pr_{\mu_s}((-) \star \mathscr{T}(\gamma))$ and similarly for $\tau_{\IW,\Gm}^s$. Naturally, then, our interest turns to the functor $(-) \star \mathscr{T}(\gamma)$. To analyse this functor, we need the following technical lemma connecting objects in constructible derived categories with intersection cohomology complexes on their supports. In its proof, we encounter the \textit{recollement} (``gluing'') situation, as explained in detail in \cite{recollement}.

\begin{lem} \label{devisage}
Assume $X$ is a stratified ind-variety,
$$X = \bigsqcup_{\zeta \in \Lambda} X^\zeta,$$
where the strata $X^\zeta$ are locally closed and simply connected varieties, such that $$\overline{X^\zeta} = \bigsqcup_{\zeta' \le \zeta} X^{\zeta'}.$$ Let $\mathfrak{I} \subseteq \Lambda$ be a finite downward-closed partially ordered subset and write $$X^\mathfrak{I} = \bigsqcup_{\zeta \in \mathfrak{I}} X_\zeta.$$ If $\mathcal{F} \in D_\Lambda^b(X)$ is supported on $X^\mathfrak{I}$, then
$$\mathcal{F} \in D(\mathfrak{I}) = \langle \IC(\zeta): \zeta \in \mathfrak{I} \rangle_\Delta,$$
the full triangulated subcategory generated by the $\IC(\zeta)$. This coincides with the full triangulated subcategory generated by the tilting objects $\mathscr{T}(\zeta)$, $\zeta \in \mathfrak{J}$.
\end{lem}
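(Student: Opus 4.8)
The plan is to prove the two assertions in turn, both via standard recollement ("dévissage") arguments for stratified spaces. For the first assertion, that any $\mathcal{F} \in D_\Lambda^b(X)$ supported on $X^\mathfrak{I}$ lies in $D(\mathfrak{I}) = \langle \IC(\zeta) : \zeta \in \mathfrak{I} \rangle_\Delta$, I would induct on the cardinality of $\mathfrak{I}$. Pick $\zeta_0 \in \mathfrak{I}$ maximal, so $X^{\zeta_0}$ is open in $X^\mathfrak{I}$ and $X^{\mathfrak{I} \setminus \{\zeta_0\}} = X^\mathfrak{I} \setminus X^{\zeta_0}$ is closed; write $j: X^{\zeta_0} \hookrightarrow X^\mathfrak{I}$ for the open inclusion and $i: X^{\mathfrak{I} \setminus \{\zeta_0\}} \hookrightarrow X^\mathfrak{I}$ for the complementary closed inclusion. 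The recollement triangle $j_! j^* \mathcal{F} \to \mathcal{F} \to i_* i^* \mathcal{F} \triright$ reduces us to showing both outer terms lie in $D(\mathfrak{I})$. The term $i_* i^* \mathcal{F}$ is (the pushforward of) an object supported on $X^{\mathfrak{I} \setminus \{\zeta_0\}}$, so it lies in $D(\mathfrak{I} \setminus \{\zeta_0\}) \subseteq D(\mathfrak{I})$ by the induction hypothesis (using that $D(\mathfrak{I})$ is closed under the pushforward $i_*$, which holds since $i_* \IC(\zeta) = \IC(\zeta)$ for $\zeta \leq \zeta_0$, $\zeta \neq \zeta_0$). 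For $j_! j^* \mathcal{F}$: since $X^{\zeta_0}$ is smooth and simply connected, $j^*\mathcal{F} \in D_c^b(X^{\zeta_0})$ is a finite successive extension of shifts of the constant sheaf $\underline{\bk}_{X^{\zeta_0}}$ (there are no other constructible sheaves up to extension on a simply connected stratum, and the $t$-structure is bounded), so $j_! j^* \mathcal{F}$ is built from shifts of $j_! \underline{\bk}_{X^{\zeta_0}}[d_{\zeta_0}] = \Delta(\zeta_0)$, the standard object. Finally $\Delta(\zeta_0) \in D(\mathfrak{I})$ because the standard object has a (finite) filtration whose subquotients are shifts of $\IC(\zeta)$ for $\zeta \leq \zeta_0$ — this is precisely the statement that $\IC$'s generate the same triangulated subcategory as the $\Delta$'s on a finite downward-closed subset, which follows from the triangle $\IC(\zeta_0) \to \Delta(\zeta_0) \to (\text{stuff supported on the boundary}) \triright$ together with induction.

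For the second assertion, that $D(\mathfrak{I})$ coincides with the triangulated subcategory generated by the tilting objects $\mathscr{T}(\zeta)$, $\zeta \in \mathfrak{I}$, I would argue by the usual highest-weight-category bookkeeping. In a highest weight category (here the relevant perverse heart), each tilting object $\mathscr{T}(\zeta)$ has a standard filtration with top quotient $\Delta(\zeta)$ and other subquotients $\Delta(\zeta')$ with $\zeta' < \zeta$; since each $\Delta(\zeta)$ for $\zeta \in \mathfrak{I}$ lies in $D(\mathfrak{I})$ (shown above), and $D(\mathfrak{I})$ is triangulated, we get $\langle \mathscr{T}(\zeta) : \zeta \in \mathfrak{I} \rangle_\Delta \subseteq D(\mathfrak{I})$. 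Conversely, I would show each $\Delta(\zeta)$, hence each $\IC(\zeta)$, lies in $\langle \mathscr{T}(\zeta') : \zeta' \in \mathfrak{I}\rangle_\Delta$: this is the standard fact that in a highest weight category with finite poset the standard objects and the tilting objects generate the same triangulated subcategory (one inverts the unitriangular change-of-basis, using that $\mathscr{T}(\zeta) = \Delta(\zeta) + \sum_{\zeta' < \zeta}(\cdots)\Delta(\zeta')$ is unitriangular in the $\Delta$'s over the finite poset $\mathfrak{I}$). Since we already know $\IC$'s and $\Delta$'s generate the same subcategory, this closes the loop: $D(\mathfrak{I}) = \langle \IC(\zeta)\rangle_\Delta = \langle \Delta(\zeta)\rangle_\Delta = \langle \mathscr{T}(\zeta)\rangle_\Delta$ over $\mathfrak{I}$.

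The main obstacle I anticipate is the step asserting that $j^*\mathcal{F}$ on a smooth simply connected stratum is a finite iterated extension of shifts of the constant sheaf, and more generally making sure the boundedness and finiteness hypotheses are cleanly in force at each stage of the induction (the ind-variety context means $X^\mathfrak{I}$ should genuinely be a finite-dimensional variety once $\mathfrak{I}$ is finite, which is why the finiteness of $\mathfrak{I}$ is essential). One must also be slightly careful that ``supported on $X^\mathfrak{I}$'' is used correctly: $\mathcal{F}$ is the pushforward along the closed embedding $X^\mathfrak{I} \hookrightarrow X$ of an object of $D^b_{\mathfrak{I}}(X^\mathfrak{I})$, and the statement $\mathcal{F} \in D(\mathfrak{I})$ is read inside $D^b_\Lambda(X)$ via that pushforward, which is harmless since $i_{X^\mathfrak{I},*}$ sends $\IC(\zeta)$ on $X^\mathfrak{I}$ to $\IC(\zeta)$ on $X$. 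Everything else is routine recollement and highest-weight-category formalism, so the write-up can afford to be brisk about those parts and spend its care on the stratum-by-stratum induction.
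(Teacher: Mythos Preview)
Your proposal is correct and follows essentially the same route as the paper: induction on $|\mathfrak{I}|$, pick a maximal $\zeta_0$, use the recollement triangle on $X^{\zeta_0} \hookrightarrow X^{\mathfrak{I}} \hookleftarrow X^{\mathfrak{I}\setminus\{\zeta_0\}}$, handle the closed term by induction and the open term by reducing to shifts of the constant sheaf on a single stratum, then show $j_!\underline{\bk}_{X^{\zeta_0}}[d_{\zeta_0}]$ lies in $D(\mathfrak{I})$ via a further triangle with boundary-supported cone. The only cosmetic differences are that the paper isolates the single-stratum step as an explicit $n=1$ base case via perverse truncation (rather than invoking ``simply connected $\Rightarrow$ built from constants'' in one breath), and for the tilting assertion the paper simply cites \cite[Prop.~7.17]{riche} rather than writing out the unitriangular highest-weight argument you sketch; also note your triangle should read $\Delta(\zeta_0)\to\IC(\zeta_0)\to(\text{boundary})$ rather than the other way round, but this does not affect the argument.
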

\begin{proof}
In the following, the symbols ${^p} \tau$ and ${^p}\mathscr{H}$ refer to perverse truncation and perverse cohomology functors on the constructible derived category $D_\Lambda^b(X)$, respectively. The second claim follows from the first by \cite[Prop. 7.17]{riche}, so we need only prove the first claim. For this we use induction on $n = |\mathfrak{I}|$. 

If $n = 1$, then $\mathfrak{I} = \{ \zeta \}$ with
$$X^\zeta = \overline{X^\zeta}.$$
Let $i$ be maximal with ${^p}\mathscr{H}^i(\mathcal{F}) \ne 0$, so there is a distinguished triangle in $D_{\Lambda}^b(X)$,
$${^p} \tau_{< i} \mathcal{F} \to \mathcal{F} \to {^p}\mathscr{H}^i(\mathcal{F})[-i] \overset{[1]}{\to}.$$
Now if $i'$ is maximal such that ${^p}\mathscr{H}^{i'}({^p}\tau_{< i} \mathcal{F}) \ne 0$, then by construction $i' < i$; induction on $i$ now settles the case $n = 1$.

Suppose now that $n > 1$ and choose $\zeta \in \mathfrak{I}$ maximal. Then
$$X^\mathfrak{I} - X^\zeta = \bigsqcup_{\zeta' \in \mathfrak{I}} X^{\zeta'} - X^\zeta = \bigsqcup_{\zeta \ne \zeta' \in \mathfrak{I}} X^{\zeta'};$$
since $\overline{X^{\zeta'}}$ is a union of strata $X^{\zeta''}$ with $\zeta'' < \zeta'$, the maximality of $\zeta$ shows that 
$$\bigsqcup_{\zeta \ne \zeta' \in \mathfrak{I}} X^{\zeta'} = \bigcup_{\zeta \ne \zeta' \in \mathfrak{I}} \overline{X^{\zeta'}}$$
is closed (by finiteness of $\mathfrak{I}$). Hence we have a \textit{recollement} situation,
$$X^\zeta \hookrightarrow X^\mathfrak{I} \hookleftarrow X^\mathfrak{I} - X^\zeta;$$
call these inclusions $j$ and $i$, respectively. Now, by assumption, $\mathcal{F}$ is the pushforward of some $\mathcal{F}' \in D_{\Lambda}^b(X^\mathfrak{I})$. Accordingly, we can form a distinguished triangle
$$j_! j^! \mathcal{F}' \to \mathcal{F}' \to i_* i^* \mathcal{F}' \overset{[1]}{\to}.$$
Note that $i_* i^* \mathcal{F}'$ is supported on $X^{\mathfrak{I}'}$, where $\mathfrak{I}' = \{ \zeta' \in \mathfrak{I}: \zeta' < \zeta \}$ is a proper subset of $\mathfrak{I}$, so by induction $i_* i^* \mathcal{F}' \in D(\mathfrak{I})$. We reduce to showing $j_! j^! \mathcal{F}' \in D(\mathfrak{I})$. Note that $j^! \mathcal{F}'$ is a sheaf on $X^\zeta$, so by the $n = 1$ case above (with $X$ replaced by $X^\zeta$ and $\Lambda$ and $\mathfrak{I}$ by $\{ \zeta \}$) we can conclude that
$$j^! \mathcal{F}' \in \langle \underline{\bk}_{X^\zeta}[d_\zeta] \rangle_\Delta,$$
and hence $j_! j^! \mathcal{F}' \in \langle j_! \underline{\bk}_{X^\zeta}[d_\zeta] \rangle_\Delta.$ Thus it will suffice to prove that $j_! \underline{\bk}_{X^\zeta}[d_\zeta] \in D(\mathfrak{I})$. Since $j_!$ is right $t$-exact, $j_! \underline{\bk}_{X^\zeta}[d_\zeta] \in {^p}D^{\le 0}$. Now we have a distinguished triangle,
$${^p} \tau_{< 0}(j_! \underline{\bk}_{X^\zeta}[d_\zeta]) \to j_! \underline{\bk}_{X^\zeta}[d_\zeta] \to {^p}j_! \underline{\bk}_{X^\zeta}[d_\zeta] \overset{[1]}{\to}.$$
Using the identification $j^! \underline{\bk}_{X^\mathfrak{I}} \cong \underline{\bk}_{X^\zeta}$ as well as the adjunctions $(j_!,j^!)$ and $({^p}j_!,j^!)$, we can see that
$$j^* j_! \underline{\bk}_{X^\zeta} = j^! j_! \underline{\bk}_{X^\zeta} = j^! j_! j^! \underline{\bk}_{X^\mathfrak{I}} = j^! \underline{\bk}_{X^\mathfrak{I}} = j^! ({^p}j_!) j^! \underline{\bk}_{X^\mathfrak{I}} = j^* ({^p}j_!) \underline{\bk}_{X^\zeta}.$$ 
The upshot is that ${^p} \tau_{< 0}(j_! \underline{\bk}_{X^\zeta}[d_\zeta])$ is supported on $\mathfrak{I} - \{ \zeta \}$, so by the inductive assumption on $n$ we reduce to considering ${^p}j_! \underline{\bk}_{X^\zeta}[d_\zeta]$. This is a perverse sheaf, for which the result is well known.
\end{proof} 

From the restriction isomorphism $(j^\gamma)^* \mathscr{T}(\gamma) \cong \underline{\bk}_{\Gr^\gamma}[\dim(\Gr^\gamma)]$ and the adjunction isomorphism
$$\Hom((j^\gamma)^* \mathscr{T}(\gamma),\underline{\bk}_{\Gr^\gamma}[\dim(\Gr^\gamma)]) \cong \Hom(\mathscr{T}(\gamma),\mathscr{K}_\gamma),$$ for $\mathscr{K}_\gamma = (j^\gamma)_* \underline{\bk}_{\Gr^\gamma}[\dim(\Gr^\gamma)]$, we produce a distinguished triangle in $D_{L^+G}^b(\Gr,\bk)$,
\begin{equation} \label{dt}
C \to \mathscr{T}(\gamma) \to \mathscr{K}_\gamma \overset{[1]}{\to}.
\end{equation}
Evidently $(j^\gamma)^* C = 0$, so $C$ is supported on the union of the $\Gr^\zeta$ with $\zeta < \gamma$ and by Lemma \ref{devisage} we infer that $C \in \langle \IC(\zeta): \zeta < \gamma \rangle_\Delta$. This control on $C$ will be shown to justify a final shift of attention, to the functor 
$$(-) \star \mathscr{K}_\gamma: D_{\IW}^b(\Gr,\bk) \to D_{\IW}^b(\Gr,\bk).$$

For the remainder of this section, we let 
\begin{align*}
   \mathscr{Y}_*^\gamma=LG \times^{L^+G} \Gr^\gamma & \overset{\iota}{\longhookrightarrow} \mathscr{Y}_* = LG \times^{L^+G} \Gr, \quad \mathscr{Y} = \Gr \times \Gr.
\end{align*}
\begin{prop} \label{conv}
Consider the following diagram:
\begin{center}
\begin{tikzcd}[column sep=normal]
\Gr & \mathscr{Y}_*^\gamma \arrow{l}[above]{\pi} \arrow{r}{m} & \Gr;
\end{tikzcd}
\end{center}
here $\pi$ is a projection and $m$ is induced by multiplication. There is a natural isomorphism,
$$\mathscr{F} \star \mathscr{K}_\gamma \cong m_* \pi^* \mathscr{F}, \quad \mathscr{F} \in D_{\IW}^b(\Gr,\bk).$$
\end{prop}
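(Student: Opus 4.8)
The plan is to compute the convolution directly, using base change to pull the locally closed pushforward $(j^\gamma)_*$ out of the twisted product. Write $\mathscr{K}_\gamma = (j^\gamma)_* \mathscr{H}$ with $\mathscr{H} = \underline{\bk}_{\Gr^\gamma}[\dim \Gr^\gamma]$; let $\widetilde{\pi}, \widetilde{m} \colon \mathscr{Y}_* \to \Gr$ be the first-factor projection and the multiplication map, so that $\pi = \widetilde{\pi} \circ \iota$ and $m = \widetilde{m} \circ \iota$; and write $\mathscr{F}_{LG}$ for the pullback of $\mathscr{F}$ along the $L^+G$-torsor quotient $LG \to \Gr$. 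The first step is to observe that the square
\begin{center}
\begin{tikzcd}
LG \times \Gr^\gamma \arrow{r}{q'} \arrow{d}{\id \times j^\gamma} & \mathscr{Y}_*^\gamma \arrow{d}{\iota} \\
LG \times \Gr \arrow{r}{q} & \mathscr{Y}_*
\end{tikzcd}
\end{center}
is Cartesian, with horizontal maps the (smooth) $L^+G$-torsor quotients. Since $\mathscr{F} \widetilde{\boxtimes} \mathscr{K}_\gamma$ is by construction the unique object with $q^*(\mathscr{F} \widetilde{\boxtimes} \mathscr{K}_\gamma) \cong \mathscr{F}_{LG} \boxtimes \mathscr{K}_\gamma$, one combines the Künneth identity $\mathscr{F}_{LG} \boxtimes (j^\gamma)_* \mathscr{H} \cong (\id \times j^\gamma)_*\bigl(\mathscr{F}_{LG} \boxtimes \mathscr{H}\bigr)$ with smooth base change around the square to conclude that $\mathscr{F} \widetilde{\boxtimes} \mathscr{K}_\gamma \cong \iota_* \mathscr{G}$, where $\mathscr{G} \in D_{\IW}^b(\mathscr{Y}_*^\gamma, \bk)$ is characterized by $(q')^* \mathscr{G} \cong \mathscr{F}_{LG} \boxtimes \mathscr{H}$ (uniqueness holding because $q'$ is again a torsor quotient, so $(q')^*$ is conservative).

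The second step identifies $\mathscr{G}$. Since $\mathscr{H}$ is a shift of the constant sheaf on $\Gr^\gamma$, the complex $\mathscr{F}_{LG} \boxtimes \mathscr{H}$ on $LG \times \Gr^\gamma$ is just the pullback of $\mathscr{F}$ along the composite $LG \times \Gr^\gamma \xrightarrow{q'} \mathscr{Y}_*^\gamma \xrightarrow{\pi} \Gr$, shifted by $[\dim \Gr^\gamma]$; so the same descent argument yields $\mathscr{G} \cong \pi^* \mathscr{F}[\dim \Gr^\gamma]$. Applying $\widetilde{m}_*$ and using $\widetilde{m}_* \circ \iota_* = (\widetilde{m} \circ \iota)_* = m_*$, we get
$$\mathscr{F} \star \mathscr{K}_\gamma = \widetilde{m}_*(\mathscr{F} \widetilde{\boxtimes} \mathscr{K}_\gamma) \cong \widetilde{m}_* \iota_* \mathscr{G} \cong m_* \pi^* \mathscr{F}[\dim \Gr^\gamma],$$
which is the asserted isomorphism up to the evident shift (the $[\dim \Gr^\gamma]$ coming from the normalization of $\mathscr{K}_\gamma$). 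Functoriality in $\mathscr{F}$ and compatibility with the Iwahori--Whittaker equivariance condition are automatic, since convolution with the $L^+G$-equivariant object $\mathscr{K}_\gamma$, the associated-bundle constructions, and the functors $\iota_*$ and $\pi^*$ all preserve it; concretely, the whole argument is carried out on the finite-dimensional approximations used throughout the paper.

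I expect the main obstacle to be making the base-change and descent steps genuinely rigorous in this ind-scheme setting: checking that the displayed square is Cartesian, that $q$ and $q'$ are smooth of the same relative dimension so that $\iota_*$ commutes with the torsor pullbacks, and that the twisted product is really pinned down by its pullback along $q'$. Once those (standard but fiddly) points are secured, the proof reduces to the short computation above together with the routine tracking of the single shift $[\dim \Gr^\gamma]$.
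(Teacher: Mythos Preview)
Your proof is correct and takes essentially the same route as the paper: both set up the Cartesian square relating $LG \times \Gr^\gamma \to \mathscr{Y}_*^\gamma$ and $LG \times \Gr \to \mathscr{Y}_*$, invoke smooth base change together with descent along the torsor quotient to show that $\mathscr{F} \widetilde{\boxtimes} \mathscr{K}_\gamma \cong \iota_*(\pi^*\mathscr{F})$ (up to shift), and conclude via $m'_* \iota_* = m_*$. You are in fact more careful than the paper about the shift $[\dim \Gr^\gamma]$ coming from the normalization of $\mathscr{K}_\gamma$; the paper's proof silently drops it in the step $(1 \times j^\gamma)_*(p^*\mathscr{F} \boxtimes \underline{\bk}_{\Gr^\gamma}) \cong p^*\mathscr{F} \boxtimes \mathscr{K}_\gamma$.
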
\begin{proof}
Let $j^\gamma: \Gr^\gamma \hookrightarrow \Gr$. We have a commutative diagram,
\begin{center}
\begin{tikzcd}[column sep=normal]
\Gr & \mathscr{Y}_*^\gamma \arrow{l}[above]{\pi} \arrow[hookrightarrow]{r}{\iota} & \mathscr{Y}_* \arrow{r}{m'} & \Gr \\
LG \arrow{u}{p} & LG \times \Gr^\gamma \arrow{l}[above]{\pi_1} \arrow{u}{q} \arrow[hookrightarrow]{r}{1 \times j^\gamma} & LG \times \Gr, \arrow{u}{q'}
\end{tikzcd}
\end{center}
where the vertical arrows are the obvious quotients, $\pi_1$ is the projection onto the first factor, and $m'$ is induced by the usual multiplication map. Note that $m = m' \iota$ and that the right-most square is cartesian. By definition,
$\mathscr{F} \star \mathscr{K}_\gamma = m'_*(\mathscr{F} \widetilde{\boxtimes} \mathscr{K}_\gamma),$
where $\mathscr{F} \widetilde{\boxtimes} \mathscr{K}_\gamma$ is the unique object in $D_{L^+G}^b(LG \times^{L^+G} \Gr)$ for which
$$(q')^*(\mathscr{F} \widetilde{\boxtimes} \mathscr{K}_\gamma) \cong p^* \mathscr{F} \boxtimes \mathscr{K}_\gamma.$$
We claim that $\mathscr{F} \widetilde{\boxtimes} \mathscr{K}_\gamma \cong \iota_* \pi^* \mathscr{F}$. Indeed, using smooth base change,
$$(q')^* \iota_* \pi^* \mathscr{F} \cong (1 \times j^\gamma)_* q^* \pi^* \cong (1 \times j^\gamma)_* (p \circ \pi_1)^* \mathscr{F}.$$
Now $p \circ \pi_1 = p \times \pt$, so $(p \circ \pi_1)^* \mathscr{F} \cong p^* \mathscr{F} \boxtimes \underline{\bk}_{\Gr^\gamma}$ and we have
$$(1 \times j^\gamma)_* (p^* \mathscr{F} \boxtimes \underline{\bk}_{\Gr^\gamma}) \cong p^* \mathscr{F} \boxtimes \mathscr{K}_\gamma.$$
Accordingly, 
$\mathscr{F} \star \mathscr{K}_\gamma = m'_*(\mathscr{F} \widetilde{\boxtimes} \mathscr{K}_\gamma) \cong m'_* \iota_* \pi^* \mathscr{F} = m_* \pi^* \mathscr{F}.$
\end{proof}

It will become convenient to work with an untwisted version of the functor $m_* \pi^*$. As mentioned in the proof of \cite[Lemma 4.4]{mv07}, the product of projection and multiplication yields an isomorphism of ind-schemes $$\pi \times m: \mathscr{Y}_* \xrightarrow{\sim} \mathscr{Y},$$ in the notation of Proposition \ref{conv}; it is $\mathbb{G}_m$-equivariant, since $\pi$ and $m$ are. Denote by $\mathscr{Y}^\gamma \subseteq \mathscr{Y}$ the image of $\pi \times m$ restricted to $\mathscr{Y}_*^\gamma$. Then we
have a commutative diagram
\begin{center}
\begin{tikzcd}[column sep=normal]
 \Gr & \mathscr{Y}_*^\gamma \arrow{d}{\rotatebox{270}{$\sim$}} \arrow{l}[above]{\pi} \arrow{r}{m} & \Gr \\
& \mathscr{Y}^\gamma \arrow{ul}{\phi_1} \arrow{ur}[below]{\phi_2} & 
\end{tikzcd}
\end{center}
where the $\phi_i$ are projection maps, and $\mathscr{Y}^\gamma$ is stable under the natural diagonal action of $I^+$ (and even $L^+G$) on $\Gr \times \Gr$. 

\subsection{Translation functors in Smith theory}
Ultimately, we will show that $(\phi_2)_*\phi_1^*$ induces an action in Smith theory which coincides with $(q^s)_*^\Smith$; the next proposition facilitates the first step towards this goal.

\begin{prop}
The following diagram commutes up to natural isomorphism:
\begin{center}
\begin{tikzcd}
 D_{\IW,\mathbb{G}_m}^b(\Gr)  \arrow{d}{i_{\Gr}^{!*}} \arrow{r}{\phi_1^*} & D_{\IW,\mathbb{G}_m}^b(\mathscr{Y}^\gamma) \arrow{d}{i_{\mathscr{Y}^\gamma}^{!*}} \arrow{r}{(\phi_2)_*} & \arrow{d}{i_{\Gr}^{!*}} D_{\IW,\mathbb{G}_m}^b(\Gr) \\
 \Smith_{\IW}(\Gr^\varpi)  \arrow{r}{(\phi_1^\varpi)_\Smith^*} & \Smith_{\IW}((\mathscr{Y}^\gamma)^\varpi) \arrow{r}{(\phi_2^\varpi)_*^\Smith} &  \Smith_{\IW}(\Gr^\varpi).
\end{tikzcd}
\end{center}
\end{prop}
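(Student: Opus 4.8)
The statement is the commutativity of two squares, which I would establish separately. Recall from Proposition~\ref{summary}(3) that $i_{\Gr}^{!*}\cong\mathsf{Q}_{\Gr^\varpi}\circ i_{\Gr}^{*}\cong\mathsf{Q}_{\Gr^\varpi}\circ i_{\Gr}^{!}$, and likewise for $i_{\mathscr{Y}^\gamma}^{!*}$; from Proposition~\ref{summary}(1) that the bottom functors $(\phi_1^\varpi)_\Smith^{*}$ and $(\phi_2^\varpi)_*^{\Smith}$ exist and sit in squares commuting with the various $\mathsf{Q}$'s. All the spaces involved are ind-schemes, and one reduces as usual to closed finite unions of orbits and passes to the direct limit; this is harmless because $\mathscr{Y}^\gamma$ is stable under the diagonal action of $L^{+}G$, hence of $I^{+}$ and of $I_\ell^{+}$ on the fixed loci, and the $\mathrm{IW}$- and $\mathbb{G}_m$-equivariant structures are transported by all six functors and by $\mathsf{Q}$ by construction. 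I suppress these routine reductions below.

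\emph{The left square} is formal --- it is the pullback analogue of Proposition~\ref{summary}(4). Write $i^{!*}\cong\mathsf{Q}\circ i^{*}$. As $\phi_1$ is $\mathbb{G}_m$-equivariant it maps $\varpi$-fixed points to $\varpi$-fixed points, so $\phi_1\circ i_{\mathscr{Y}^\gamma}=i_{\Gr}\circ\phi_1^\varpi$, whence $i_{\mathscr{Y}^\gamma}^{*}\circ\phi_1^{*}\cong(\phi_1^\varpi)^{*}\circ i_{\Gr}^{*}$ by functoriality of $*$-pullback. Applying $\mathsf{Q}_{(\mathscr{Y}^\gamma)^\varpi}$ and invoking the compatibility of $(\phi_1^\varpi)_\Smith^{*}$ with $\mathsf{Q}$ from Proposition~\ref{summary}(1) gives $i_{\mathscr{Y}^\gamma}^{!*}\circ\phi_1^{*}\cong(\phi_1^\varpi)_\Smith^{*}\circ i_{\Gr}^{!*}$.

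\emph{The right square} is the substantive half and is the pushforward analogue of Proposition~\ref{summary}(4); the difference is that $\phi_2$ is not an inclusion, so the square of spaces built from $\phi_2,\phi_2^\varpi$ and the fixed-point inclusions fails to be cartesian, and one must feed in the basic vanishing of Smith theory. Here I would use $i^{!*}\cong\mathsf{Q}\circ i^{!}$. Form the cartesian square with $W=\mathscr{Y}^\gamma\times_{\Gr}\Gr^\varpi$, projections $\tilde\imath\colon W\to\mathscr{Y}^\gamma$ and $\tilde\phi\colon W\to\Gr^\varpi$, and $\phi_2\circ\tilde\imath=i_{\Gr}\circ\tilde\phi$; give $W$ the $\varpi$-action induced by loop rotation on $\mathscr{Y}^\gamma$ and the trivial action on $\Gr^\varpi$. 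One checks $W^\varpi=(\mathscr{Y}^\gamma)^\varpi$; write $k\colon(\mathscr{Y}^\gamma)^\varpi\hookrightarrow W$ for the closed inclusion, $j$ for the open complement, so that $i_{\mathscr{Y}^\gamma}=\tilde\imath\circ k$ and $\phi_2^\varpi=\tilde\phi\circ k$. The base-change isomorphism $i_{\Gr}^{!}\circ(\phi_2)_{*}\cong\tilde\phi_{*}\circ\tilde\imath^{!}$ holds for \emph{every} cartesian square (unlike proper base change it needs no hypothesis on $\phi_2$), so together with Proposition~\ref{summary}(1) we obtain
\begin{align*}
i_{\Gr}^{!*}\circ(\phi_2)_{*}&\cong\mathsf{Q}_{\Gr^\varpi}\circ\tilde\phi_{*}\circ\tilde\imath^{!},\\
(\phi_2^\varpi)_*^{\Smith}\circ i_{\mathscr{Y}^\gamma}^{!*}&\cong\mathsf{Q}_{\Gr^\varpi}\circ(\phi_2^\varpi)_{*}\circ i_{\mathscr{Y}^\gamma}^{!}\cong\mathsf{Q}_{\Gr^\varpi}\circ\tilde\phi_{*}\circ k_{*}\circ k^{!}\circ\tilde\imath^{!}.
\end{align*}
It remains to show these agree, i.e. that applying $\mathsf{Q}_{\Gr^\varpi}\circ\tilde\phi_{*}$ to the recollement triangle $k_{*}k^{!}\to\mathrm{id}\to j_{*}j^{*}\triright$ (evaluated on $\tilde\imath^{!}(-)$) identifies its first two terms; equivalently, that $\mathsf{Q}_{\Gr^\varpi}\bigl((\tilde\phi\circ j)_{*}\,j^{*}\,\tilde\imath^{!}(-)\bigr)=0$. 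But $\tilde\phi\circ j$ is a morphism from $W\setminus(\mathscr{Y}^\gamma)^\varpi$ --- on which $\varpi$, being cyclic of prime order $\ell$ and fixed-point-free, acts \emph{freely} --- to $\Gr^\varpi$, on which $\varpi$ acts trivially, and a pushforward along such a morphism lands in the $\varpi$-perfect subcategory. This is precisely the vanishing mechanism underpinning Smith theory, the same input behind \cite[Lemma 3.6]{st} and \cite[Lemma 6.1]{st} and hence behind Proposition~\ref{summary}(1). So the discrepancy term is killed by $\mathsf{Q}_{\Gr^\varpi}$ and the right square commutes.

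\emph{The main obstacle} is exactly this last point --- the $\varpi$-perfectness of the ``free part'' $(\tilde\phi\circ j)_{*}\,j^{*}\,\tilde\imath^{!}(-)$. Everything else reduces to a diagram chase built from Proposition~\ref{summary}(1), the functoriality of $(-)^{*}$, the universal base change $g^{!}f_{*}\cong f'_{*}(g')^{!}$, and the recollement triangle; this is the precise content, in the present situation, of the principle that ``Smith localisation commutes with everything''.
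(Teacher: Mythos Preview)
Your proof is correct and follows essentially the same route as the paper's. For the left square you both invoke functoriality of $*$-pullback together with the compatibility of $\mathsf{Q}$ with pullback; for the right square you both form the fibre product (the paper's $P$ is your $W$), use the base change $i^{!}(\phi_2)_{*}\cong\tilde\phi_{*}\tilde\imath^{!}$, apply the recollement triangle for the closed/open decomposition determined by the $\varpi$-fixed locus, and kill the ``free part'' via the fundamental Smith-theoretic vanishing (the paper cites \cite[Prop.~2.6]{st} for this, you cite the related Lemmas~3.6 and~6.1).
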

\begin{proof}
That the left square commutes is merely a consequence of functoriality:
$$(\phi_1^\varpi)_\Smith^* \circ i_\Gr^{!*} = (\phi_1^\varpi)_\Smith^* \mathsf{Q} i_{\Gr}^* = \mathsf{Q} (i_\Gr \circ \phi_1^\varpi)^* = \mathsf{Q} (\phi_1 \circ i_{\mathscr{Y}_0})^* = i_{\mathscr{Y}_0}^{!*} \circ \phi_1^*.$$
For the right square, we establish a more general base change result. Let $f: A \to B$ be a $\varpi$-equivariant quasi-separated morphism, where $A \subseteq \mathscr{Y}^\gamma$ and $B \subseteq \Gr$ are closed finite unions of $I^+$-orbits. Then we claim the following diagram commutes:
\begin{equation} \label{base}
\begin{tikzcd}
 D_{\IW,\mathbb{G}_m}^b(A) \arrow{d}{i_A^{!*}} \arrow{r}{f_*} & \arrow{d}{i_{B}^{!*}} D_{\IW,\mathbb{G}_m}^b(B) \\
\Smith_{\IW}(A^\varpi)\arrow{r}{(f^\varpi)_*^\Smith} &\Smith_{\IW}(B^\varpi).
\end{tikzcd}
\end{equation}
Indeed, suppose $P$ is the pullback in a cartesian square, 
\begin{center}
\begin{tikzcd}
 A \arrow{r}{f} & B \\
P \arrow{r}{f'} \arrow{u}{i'} & B^\varpi. \arrow{u}{i_B}
\end{tikzcd}
\end{center}
The universal $a: A^\varpi \to P$ is a closed immersion, so we have a \textit{recollement} situation
$$A^\varpi \rightarrow P \leftarrow U,$$
where $j: U \hookrightarrow P$ has a free $\varpi$-action. Now $f' \circ j: U \to B^\varpi$ is $\varpi$-equivariant so factors as $$\overline{j} \circ q_U: U \to U/\varpi \to B^\varpi.$$
By the proof of \cite[Prop. 2.6]{st}, we see that $(f' \circ j)_* H$ has perfect geometric stalks if $H$ is an object in $D_{\varpi,c}^b(U)$. After composing with $f'_*$, a distinguished triangle in $D_{\IW,\Gm}^b(P)$ of the form
$$a_! a^! F \to F \to j_* j^* F \overset{[1]}{\to}$$
becomes a distinguished triangle in $D_{\IW,\Gm}^b(B^\varpi),$
$$f'_* a_! a^! F \to f'_* F \to (f' \circ j)_* j^*F \overset{[1]}{\to},$$
where $f'_* a_! a^! F = f_*^\varpi a^! F$. Take Smith quotients, with $H = \text{Res}_\varpi^{\Gm}(j^*F)$ in mind:
$$(f^\varpi)_*^\Smith \mathsf{Q}_A(a^! F) \cong \mathsf{Q}_B(f'_*F).$$
Assume that $F = (i')^!E$, so that $a^! F = i_A^! E$. We are then left with 
$$(f^\varpi)_*^\Smith i_A^{!*} E \cong \mathsf{Q}_B(f'_* i'^! E) \cong \mathsf{Q}_B(i_B^! f_* E) = i_B^{!*} f_* E$$
as desired; hence, \eqref{base} commutes after all. Now the projection $\phi_2$ is approximated by quasi-separated morphisms between closed finite unions of $I^+$-orbits, so we can take a direct limit over fixed points to obtain the result.
\end{proof}

To bring the Smith categories associated with partial affine flag varieties into our calculation, we must work with the pullback ind-schemes $\mathscr{W}$ and $\mathscr{Z}$ as displayed:
\begin{equation} \label{twosq}
\begin{tikzcd}
\mathscr{W} \arrow{d}{w} \arrow{r}{\eta} & (\mathscr{Y}^\gamma)^\varpi \arrow{d}{\phi_2^\varpi} \\
\Fl_\ell^s \arrow[hookrightarrow]{r}{h_\ell^s} & \Gr^\varpi
\end{tikzcd}, \quad 
\begin{tikzcd}
\mathscr{Z} \arrow{d}{z} \arrow{r}{\theta} & \mathscr{W} \arrow{d}{} \\
\Fl_\ell \arrow[hookrightarrow]{r}{h_\ell} & \Gr^\varpi;
\end{tikzcd}
\end{equation}
here $h_\ell$, $h_\ell^s$ correspond to $j_{(\lambda)}$, $j_{(\mu)}$ when identifying $\Fl_\ell \cong \Gr_{(\lambda)}$, $\Fl_\ell^s \cong \Gr_{(\mu)}$.

\begin{prop}
The ind-scheme $\mathscr{Z}$ identifies with the graph of $q^s: \Fl_\ell \to \Fl_\ell^s$.
\end{prop}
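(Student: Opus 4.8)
The plan is to unwind the two cartesian squares defining $\mathscr{W}$ and $\mathscr{Z}$, identify $\mathscr{Z}$ as an explicit locally closed ind-subscheme of $\Fl_\ell\times\Fl_\ell^s$, and then recognise it as the graph of $q^s$ by a relative-position computation on $\Gr$. First I would recall that $(\phi_1,\phi_2)\colon\mathscr{Y}^\gamma\hookrightarrow\Gr\times\Gr$ is the locally closed immersion cutting out the pairs $(aL^+G,bL^+G)$ with $a^{-1}b\in L^+G\,z^\gamma\,L^+G$, i.e.\ of relative position $\gamma$; since the $\mathbb{G}_m$-action on $\mathscr{Y}^\gamma$ is the restriction of the diagonal loop rotation on $\Gr\times\Gr$, it restricts on $\varpi$-fixed points to the inclusion $(\mathscr{Y}^\gamma)^\varpi=\mathscr{Y}^\gamma\cap(\Gr^\varpi\times\Gr^\varpi)$, with $(\phi_1^\varpi,\phi_2^\varpi)$ the two projections. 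Chasing \eqref{twosq} — both squares being cartesian along the open-and-closed immersions $h_\ell^s\colon\Fl_\ell^s\cong\Gr_{(\mu)}\hookrightarrow\Gr^\varpi$ and $h_\ell\colon\Fl_\ell\cong\Gr_{(\lambda)}\hookrightarrow\Gr^\varpi$, with the left vertical arrow of the second square equal to $\phi_1^\varpi\circ\eta$ — one finds that $\theta$ identifies $\mathscr{Z}$ with
$$\mathscr{Y}^\gamma\cap\bigl(\Gr_{(\lambda)}\times\Gr_{(\mu)}\bigr)=\bigl\{(x,y)\in\Fl_\ell\times\Fl_\ell^s:\text{$(h_\ell(x),h_\ell^s(y))$ has relative position $\gamma$}\bigr\},$$
with $z$ and $z^s$ the two projections.

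Next I would exploit equivariance: relative position being constant on diagonal $LG$-orbits, $\mathscr{Y}^\gamma$ is stable under the diagonal $LG$-action, so $\mathscr{Z}$ is stable under the diagonal action of $L_\ell G=(LG)^\varpi$ on $\Fl_\ell\times\Fl_\ell^s$. Since $L_\ell G$ acts transitively on $\Fl_\ell$ with $\mathrm{Stab}(eI_\ell)=I_\ell$, it then suffices to determine the fibre of $z$ over $eI_\ell$. Recalling $\mathrm{Stab}_{L_\ell G}(L_\lambda)=I_\ell$ and $\mathrm{Stab}_{L_\ell G}(L_\mu)=\mathcal{P}_\ell^s$, this fibre is an $I_\ell$-stable subset of $\Fl_\ell^s$ — a union of $I_\ell$-Schubert cells $I_\ell\cdot wL_\mu$, with $w$ ranging over the minimal coset representatives $W^s$ for $W/\{e,s\}$ — and, relative position being constant on each cell (as $I_\ell$ fixes $L_\lambda$), it consists of exactly those cells with $(L_\lambda,wL_\mu)$ of relative position $\gamma$. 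Writing $w=t_\eta\bar w$ with $\bar w\in\Wf$ and lifting to $L_\ell G$, a short computation with $z^\lambda,z^\mu\in LG$ identifies this relative position with $\mathrm{dom}\bigl((w\,\square_\ell\,\mu)-\lambda\bigr)$, the dominant $\Wf$-representative. The problem is thereby reduced to the assertion: for $w\in W$, $\mathrm{dom}\bigl((w\,\square_\ell\,\mu)-\lambda\bigr)=\gamma=\mathrm{dom}(\mu-\lambda)$ if and only if $w\,\square_\ell\,\mu=\mu$.

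To prove this I would fix a $\Wf$-invariant inner product $\|\cdot\|$ on $V$ making the coroots normal to the root hyperplanes, so that every affine reflection in $W$ acts by an isometry for the $\square_\ell$-action. As $-\lambda\in\textbf{a}_\ell$ lies in the open fundamental alcove and $-\mu\in\overline{\textbf{a}_\ell}$, a standard ``reflect towards the fundamental alcove'' argument gives $\|(w\,\square_\ell\,\mu)-\lambda\|>\|\mu-\lambda\|$ whenever $w\,\square_\ell\,\mu\ne\mu$: such a point lies outside $\overline{\textbf{a}_\ell}$, hence is strictly separated from $\lambda$ by some wall, and reflecting in that wall strictly decreases the distance to $\lambda$ while lowering by one the count of separating walls, so one finishes by induction. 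Since $\mathrm{dom}(-)$ preserves $\Wf$-orbits and norms, the assertion follows. Finally, the $\square_\ell$-stabiliser of $\mu$ equals $\{e,s\}$ — since $-\mu$ lies on exactly the wall $\textbf{f}_s$ of $\overline{\textbf{a}_\ell}$ — and $\{e,s\}\cap W^s=\{e\}$, so the fibre of $z$ over $eI_\ell$ reduces to the single point $e\mathcal{P}_\ell^s=q^s(eI_\ell)$. Translating by $L_\ell G$ then shows $z$ is bijective and $z^s=q^s\circ z$; as $\mathscr{Z}$ is locally closed in $\Fl_\ell\times\Fl_\ell^s$ and contains the closed, reduced graph of $q^s$ (this containment being the equality case $w=e$, where the relative position is $\mathrm{dom}(\mu-\lambda)=\gamma$), it must equal that graph, as desired.

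The main obstacle I anticipate is the combinatorial assertion above, together with keeping the loop-group lifts and the stabiliser identifications honest: this is where the hypotheses that $\lambda_0$ is regular (so $-\lambda$ is interior to the alcove) and that $\mu_s$ lies on a single reflection hyperplane (so $-\mu$ occupies exactly the wall $\textbf{f}_s$) are really used, and it is precisely what pins down $\Gr^\gamma$ as the unique orbit detecting only the trivial double coset $I_\ell\backslash L_\ell G/\mathcal{P}_\ell^s$.
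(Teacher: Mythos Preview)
Your argument is correct and reaches the same combinatorial core as the paper, but the route is genuinely different. The paper computes the fibre of $z$ by working ``from above'': it embeds the fibre into $(H_\lambda\cdot L_\mu)^{\varpi}$ with $H_\nu=z^\nu L^+G z^{-\nu}$, decomposes $H_\lambda\cdot L_\mu$ into $I_\lambda$-orbits indexed by the \emph{finite} set $W_{\mathrm f}(\mu-\lambda)+\lambda$, and then takes $\varpi$-fixed points via an explicit affine-root-subgroup description $I_u^{\lambda,\nu}\cong I_\lambda\cdot L_\nu$; the case $\nu\neq\mu$ is eliminated by showing $H_\mu\cap z^{\mu-\nu}L_\ell G=\varnothing$, which unwinds to $\nu\in W\,\square_\ell\,\mu$ and is then ruled out by \cite[Lemma II.7.7]{jan}, while the case $\nu=\mu$ is handled by the inequality $0<\langle\alpha,\lambda\rangle<\ell$, $0\le\langle\alpha,\mu\rangle\le\ell$ forcing $(I_u^{\lambda,\mu})^{\varpi}=1$. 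You instead work ``from below'', directly in $L_\ell G$: you use diagonal $L_\ell G$-equivariance to reduce to the fibre over $eI_\ell$, decompose that fibre via the Bruhat stratification of $\Fl_\ell^s$ by $I_\ell$-orbits indexed by $W^s$, and compute the relative position on each cell as $\mathrm{dom}((w\,\square_\ell\,\mu)-\lambda)$; the combinatorial step is then a Euclidean closest-point argument, which is exactly the content of Jantzen's lemma in metric form. Your approach is somewhat cleaner in that it avoids the explicit root-subgroup bookkeeping and the separate treatment of $\nu=\mu$ (for you the $w=e$ cell is a point automatically), at the cost of having to lift $w\in W$ carefully to $L_\ell G\subset LG$ and keep the two uniformisers $z^\ell$ and $z$ straight. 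Both arguments are at the same level of rigour regarding scheme structure: each really establishes equality on $\mathbb{F}$-points, which suffices here since $(\mathscr{Y}^\gamma)^{\varpi}$ is smooth (fixed points of $\varpi$ with $\ell\neq p$ on the smooth $\mathscr{Y}^\gamma$) and $\mathscr{Z}$ is an open-and-closed piece of it, hence reduced.
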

\begin{proof}
It can be verified formally that the following square is cartesian:
\begin{center}
\begin{tikzcd}
\mathscr{Z} \arrow{d}{z \times z^s} \arrow{r}{\eta \theta} & (\mathscr{Y}^\gamma)^\varpi \arrow[hookrightarrow]{d}{} \\
\Fl_\ell \times \Fl_\ell^s \arrow[hookrightarrow]{r}{h_\ell\times h_\ell^s} & \Gr^\varpi \times \Gr^\varpi
\end{tikzcd}
\end{center}
where $z^s = w\theta$, or in other words that 
\begin{equation} \label{zexpr}
\mathscr{Z} = (\mathscr{Y}^\gamma)^\varpi \cap \Fl_\ell \times \Fl_\ell^s.
\end{equation}
Now, a typical point of $\mathscr{Y}_*^\gamma$ has the form $[g,L_\gamma]$, since $[h,h'L_\gamma] = [hh',L_\gamma]$ for $h \in LG$, $h' \in L^+G$. Then the points of $\mathscr{Y}^\gamma$ have the form
$$(\pi,m)[g,L_\gamma] = (gL_0, gL_\gamma),$$
or equivalently the form $(gL_\lambda,gL_\mu)$ with $g \in LG$. Accordingly, by \eqref{zexpr}, we see that the points of $\mathscr{Z}$ have the form $(fL_\lambda, hL_\mu)$ for $f, h \in L_\ell G$, where there is some $g \in LG$ such that
$$fL_\lambda = g L_\lambda, \quad h L_\mu = g L_\mu.$$
Phrasing this condition differently, we require that
$$fz^\lambda \alpha z^{-\lambda} = h z^\mu \beta z^{-\mu}$$
for some $\alpha, \beta \in L^+G$ (the common value being $g \in LG$). It follows that the fibre of the first projection $z: \mathscr{Z} \to \Fl_\ell$ over a fixed $f L_\lambda$, for $f \in L_\ell G$, can be identified with the points $h L_\mu \in \Fl_\ell^s$ for $h \in f z^\lambda L^+ G z^{-\lambda} z^\mu L^+ G z^{-\mu} \cap L_\ell G$, or equivalently with the quotient
$$\frac{f H_\lambda H_\mu \cap L_\ell G}{H_\mu \cap L_\ell G} \cong \frac{H_\lambda H_\mu \cap L_\ell G}{H_\mu \cap L_\ell G} = F,$$
writing $H_\nu = z^\nu L^+G z^{-\nu}$ and letting $H_\mu \cap L_\ell G$ act by multiplication on the right. Of course, it will be sufficient to show that the fibre $F = \{ 1 \}$; this is the remaining aim of the proof. 

Notice first that we have $\mathbb{G}_m$-equivariant maps
$$F \hookrightarrow \frac{H_\lambda H_\mu}{H_\lambda} \cong \frac{H_\lambda}{H_\lambda \cap H_\mu} = H_\lambda \cdot L_\mu.$$
Thus $F \hookrightarrow (H_\lambda \cdot L_\mu)^\varpi$. To calculate the latter, recall first \cite[4.6]{st} the decomposition
$$\Gr^\nu = \bigsqcup_{\nu' \in W_{\text{f}}(\nu)} \Gr_{\nu'}, \quad \nu \in \textbf{X}^\vee.$$
It follows that
$$H_\lambda \cdot L_\mu = \bigsqcup_{\nu \in W_{\text{f}}(\mu-\lambda)+\lambda} I_\lambda \cdot L_\nu, \quad I_\lambda = z^\lambda I z^{-\lambda}.$$
For $\alpha \in \Phi$, let $\delta_\alpha \in \mathbb{Z}$ be $1$ or $0$ according to whether $\alpha \in \Phi_+$ or not. Then recall \cite[Lemma 4.8]{st} that we have a $\mathbb{G}_m$-equivariant isomorphism
$$I_u^\nu = \prod_\alpha \left ( \prod_{\delta_\alpha \le m < \langle \alpha, \nu \rangle} U_{\alpha + m \delta} \right ) \cong \Gr_{\nu}, \quad x \mapsto x L_\nu.$$
Accordingly, the same formula defines an equivariant isomorphism
$$I_u^{\lambda,\nu} = \prod_\alpha \left ( \prod_{\delta_\alpha + \langle \alpha, \lambda \rangle \le m < \langle \alpha, \nu \rangle} U_{\alpha + m \delta} \right ) \cong I_\lambda \cdot L_\nu.$$
Hence, taking $\varpi$-fixed points,
$$(I_u^{\lambda,\nu})^{\varpi} = \prod_\alpha \left ( \prod_{\substack{\delta_\alpha + \langle \alpha, \lambda \rangle \le m < \langle \alpha, \nu \rangle \\ \ell \mid m}} U_{\alpha + m \delta} \right ) \cong (I_\lambda \cdot L_\nu)^\varpi.$$
Let $\nu = w(\mu - \lambda) + \lambda$ for $w \in W_{\text{f}}$, and suppose $$u L_\nu = uz^\nu L^+G = u z^{\nu - \mu} L_\mu \in F$$ 
for $u \in (I_u^{\lambda,\nu})^{\varpi} \subseteq L_\ell G$. Then there is $h_\lambda h_\mu \in H_\lambda H_\mu \cap L_\ell G$ with 
$$z^{\mu - \nu} u^{-1} h_\lambda h_\mu \in H_\mu.$$
This element also belongs to $z^{\mu -\nu} L_\ell G$ and hence to the intersection $H_\mu \cap z^{\mu -\nu} L_\ell G$. However, if $\nu \ne \mu$, then we claim $H_\mu \cap z^{\mu-\nu} L_\ell G$ is empty. By translation, it suffices to prove that
$$z^{\nu} L^+ G \cap L_\ell G z^\mu = \varnothing.$$
If we suppose otherwise, then by acting on $L_0 \in \Gr$ we find that $L_{\nu} \in \Fl_\ell^s$. By \cite[Remark 4.9]{st}, this implies that
$\nu \in W \square_\ell \mu,$
and hence that $u(\mu) = w(\mu - \lambda) + \lambda$ for some $u \in W$. Rewriting,
$$u \bullet \mu_s = w(\mu_s - \lambda_0) + \lambda_0.$$
Now, by \cite[Lemma II.7.7]{jan}, this forces $\mu_s = u \bullet \mu_s$, so that $w(\mu - \lambda) = \mu - \lambda$, contradicting the assumption $\nu \ne \mu$. So we reduce to $\nu = \mu$, in which case we recall that, for $\alpha \in \Phi_+$,
$$0 < \langle \alpha, \lambda \rangle < \ell, \quad 0 \le \langle \alpha, \mu \rangle \le \ell;$$
these inequalities imply $(I_u^{\lambda,\mu})^\varpi = 1$ and hence that $F = F \cap (I_\lambda \cdot L_\mu)^\varpi = \{ 1 \}$.
\end{proof}
The upshot of our efforts is a type of ``base change'' in Smith theory along the diagram below:
\begin{center}
\begin{tikzcd} 
 \Gr^\varpi & \arrow{l}[swap]{\phi_1^\varpi} (\mathscr{Y}^\gamma)^\varpi \arrow{r}{\phi_2^\varpi} & \Gr^\varpi \\
 \Fl_\ell \arrow[bend right=30,swap]{rr}{q^s} \arrow{u}{h_\ell} & \arrow{l}[swap]{z} \mathscr{Z} \arrow{u} \arrow{r}{z^s} & \Fl_\ell^s \arrow{u}{h_\ell^s}
 \end{tikzcd}
\end{center}
To be more precise, we can combine the cartesian squares in \eqref{twosq} into one commutative diagram, now featuring the morphism $q^s$.
\begin{equation} \label{big}
\begin{tikzcd} 
 \Gr^\varpi & \arrow{l}[swap]{\phi_1^\varpi} (\mathscr{Y}^\gamma)^\varpi \arrow{r}{\phi_2^\varpi} & \Gr^\varpi \\
 \Gr^\varpi \arrow[equal]{u} & \mathscr{W} \arrow{u}{\eta} \arrow{l}{} \arrow{r}{w} & \Fl_\ell^s \arrow{u}{h_\ell^s} \\
 \Fl_\ell \arrow[bend right=30,swap]{rr}{q^s} \arrow{u}{h_\ell} & \arrow{l}[swap]{z} \mathscr{Z} \arrow{u}{\theta} \arrow{r}{z^s} & \Fl_\ell^s \arrow[equal]{u}.
 \end{tikzcd}
\end{equation}

Let us now perform the main calculation, aiming to show that Smith restriction intertwines translation onto the wall and pushforward in Smith theory:
\begin{center}
\begin{tikzcd}[column sep=large]
\Tilt_{\IW,\Gm}^0 \arrow{d}{\tau_{\IW,\Gm}^s} \arrow{r}{i_0^{!*}} & \Smith_{\IW} \arrow{d}{(q^s)_*^\Smith} \\
\Tilt_{\IW,\Gm}^s \arrow{r}{i_s^{!*}} & \Smith_{\IW}^s
\end{tikzcd}
\end{center}
Begin with any $\mathscr{F} \in \Tilt_{\IW,\Gm}^0$. Then
\begin{align}
    i_s^{!*} \tau_{\IW,\Gm}^s \mathscr{F} = i_s^{!*} \pr_{\mu}(\mathscr{F} \star \mathscr{T}(\gamma)) &= \pr_{\mu} i^{!*} (\mathscr{F} \star \mathscr{T}(\gamma)) = (h_\ell^s)_\Smith^* i^{!*} (\mathscr{F} \star \mathscr{T}(\gamma)), \nonumber
\end{align}
in view of \eqref{prsq}. Now, recall the distinguished triangle \eqref{dt}; convolving on the left by $\mathscr{F}$, we obtain
\begin{equation} \label{dt2}
\mathscr{F} \star C \to \mathscr{F} \star \mathscr{T}(\gamma) \to \mathscr{F} \star \mathscr{K}_\gamma \overset{[1]}{\to},
\end{equation}
where $\mathscr{F} \star C$ belongs to the triangulated category generated by objects $\mathscr{F} \star \mathscr{T}(\zeta)$ for $\zeta < \gamma$, by Lemma \ref{devisage}. It follows from the (equivariant) Iwahori--Whittaker version of the geometric Satake equivalence and \cite[Rmk. 7.7]{jan} that $\pr_\mu(\mathscr{F} \star \mathscr{T}(\zeta)) = 0$ in $\Tilt_{\IW,\Gm}$, so that $(h_\ell^s)_\Smith^* i^{!*} (\mathscr{F} \star \mathscr{T}(\zeta)) = 0$. Accordingly, since $(h_\ell^s)_\Smith^* i^{!*}$ is a triangulated functor, $(h_\ell^s)_\Smith^* i^{!*} (\mathscr{F} \star C) = 0$ and \eqref{dt2} yields a natural isomorphism
$$(h_\ell^s)_\Smith^* i^{!*} (\mathscr{F} \star \mathscr{T}(\gamma)) \cong (h_\ell^s)_\Smith^* i^{!*} (\mathscr{F} \star \mathscr{K}_\gamma).$$
Continuing from here, keeping in mind diagram \eqref{big},
\begin{align*}
    (h_\ell^s)_\Smith^* i^{!*} (\mathscr{F} \star \mathscr{K}_\gamma) & = (h_\ell^s)_\Smith^* i^{!*} (\phi_2)_* (\phi_1)^* \mathscr{F} \nonumber \\
    &= (h_\ell^s)_\Smith^* (\phi_2^\varpi)_*^\Smith (\phi_1^\varpi)_\Smith^* i^{!*} \mathscr{F} \nonumber \\
    &= w_*^\Smith \eta_\Smith^* (\phi_1^\varpi)_\Smith^* i^{!*} \mathscr{F},
\end{align*}
using smooth base change on the final line. Observe $i^{!*} \mathscr{F} = (h_\ell)_*^\Smith i_0^{!*} \mathscr{F}$, so that 
\begin{align*}
w_*^\Smith \eta_\Smith^* (\phi_1^\varpi)_\Smith^* i^{!*} \mathscr{F} &= w_*^\Smith \eta_\Smith^* (\phi_1^\varpi)_\Smith^* ((h_\ell)_*^\Smith i_0^{!*} \mathscr{F}) \nonumber \\
&= w_*^\Smith (\phi_1^\varpi \circ \eta)_\Smith^* (h_\ell)_*^\Smith (i_0^{!*} \mathscr{F}) \nonumber \\
&= w_*^\Smith \theta_*^\Smith z_\Smith^* (i_0^{!*} \mathscr{F}) \nonumber \\
&= (z^s)_*^\Smith z_\Smith^* (i_0^{!*} \mathscr{F}) \nonumber \\
&= (q^s)_*^\Smith (i_0^{!*} \mathscr{F}),
\end{align*}
where we have applied another base change and used the identities $z_* z^* \cong 1$, $z^s = q^s z$. Since the foregoing identifications are natural in $\mathscr{F}$, this establishes precisely that $$(q^s)_*^\Smith: \Smith_\IW^\lambda \to \Smith_\IW^\mu$$ 
is the translation functor corresponding to $T^s$. By considering formal properties of adjunctions, we see that $(q^s)_\Smith^*$ corresponds to $T_s$ and thus that $(q^s)_\Smith^* (q^s)_*^\Smith$ corresponds to $\theta_s$.
(This can also be proven directly by arguments analogous to those above.)

We have now constructed an action of $\mathscr{H}$ on the tilting category $\Tilt(\Rep_0(\textbf{G}))$ by wall-crossing functors. As explained in \cite[Rmk. 5.2(1)]{rw}, this induces a similar action on $\Rep_0(\textbf{G})$ and completes the proof of our main result.

\begin{thm} \label{main}
There is a monoidal right action of $\mathscr{H}$ on $\Rep_0(\textbf{G})$, such that $B_s \langle n \rangle$ acts by the wall-crossing functor $\theta_s$ for all $s \in S$, $n \in \mathbb{Z}$. That is, there exists a monoidal functor
$a: \mathscr{H} \to \End(\Rep_0(\textbf{G}))$ with $a(B_s \langle n \rangle) = \theta_s.$
\end{thm}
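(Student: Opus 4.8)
The plan is to assemble the structures constructed in Sections~\ref{sec:asq}--\ref{sec:bri}, transport the resulting module action across the relevant equivalences, and then prolong it from the tilting subcategory to the whole block. The first task is to recall that §\ref{s:cons}, together with Proposition~\ref{bsact}, equips $\Smith_{\IW}^{\parity}(\Fl,\bk)$ with a graded right $\mathscr{H}$-action in which $B_s$ acts by $(q^s)_\Smith^*(q^s)_*^\Smith[1]$; de-grading and invoking the decomposition \eqref{smcop} then produces a right $\mathscr{H}$-action on $\Smith_\IW=\Smith_\IW^\lambda$ under which $B_s\langle n\rangle$ acts by $(q^s)_\Smith^*(q^s)_*^\Smith$ for every $n\in\mathbb{Z}$.

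Next I would transport this to representation theory. Composing the equivalence $i^\natural$ of \eqref{crux}, restricted to the principal block $\Smith_\IW^\lambda$, with the Iwahori--Whittaker form of the geometric Satake equivalence yields an equivalence $\Tilt_0(\textbf{G})\simeq\Smith_\IW$. Transport of structure along this equivalence endows $\Tilt(\Rep_0(\textbf{G}))=\Tilt_0(\textbf{G})$ with a right $\mathscr{H}$-action; since the equivalence is additive and the $\mathscr{H}$-relations hold on the $\Smith_\IW$-side by construction, they hold for the transported data. To identify the generators, I would invoke the main calculation of Section~\ref{sec:bri}: the commuting square relating $\tau_{\IW,\Gm}^s$ and $(q^s)_*^\Smith$ via $i^{!*}$ shows that $(q^s)_*^\Smith\colon\Smith_\IW^\lambda\to\Smith_\IW^\mu$ corresponds under these equivalences to the translation-onto-wall functor $T^s$; using the adjunctions $(q^s)^*\dashv(q^s)_*$ and $(T^s,T_s)$ together with the uniqueness of adjoints, $(q^s)_\Smith^*$ corresponds to $T_s$, and hence $(q^s)_\Smith^*(q^s)_*^\Smith$ corresponds to $\theta_s=T_sT^s$. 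Therefore $B_s\langle n\rangle$ acts on $\Tilt_0(\textbf{G})$ by the wall-crossing functor $\theta_s$.

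Finally I would extend the action from $\Tilt_0(\textbf{G})$ to $\Rep_0(\textbf{G})$ exactly as in \cite[Rmk~5.2(1)]{rw}: because $\Rep_0(\textbf{G})$ is a highest weight category, $\Db(\Rep_0(\textbf{G}))\simeq\Kb(\Tilt_0(\textbf{G}))$, so any additive endofunctor of $\Tilt_0(\textbf{G})$ prolongs to $\Kb(\Tilt_0(\textbf{G}))$ and thence to $\Db(\Rep_0(\textbf{G}))$; since wall-crossing functors are exact they restrict to endofunctors of $\Rep_0(\textbf{G})$. As $\Tilt_0(\textbf{G})\hookrightarrow\Rep_0(\textbf{G})$ is fully faithful, natural transformations between such functors are determined by their restriction to tilting objects, so the composites, unit, and defining relations of $\mathscr{H}$ — together with the monoidality of the assignment — are inherited from the tilting level. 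This yields the desired monoidal functor $a\colon\mathscr{H}\to\End(\Rep_0(\textbf{G}))$ with $a(B_s\langle n\rangle)=\theta_s$.

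The genuinely difficult work — the compatibility of Smith localisation with the push--pull correspondence, the ``miraculous'' equivalence $i^\natural$, and the identification of $(q^s)_*^\Smith$ with $T^s$ — is carried out before this statement, so the only remaining point needing care is the prolongation in the last step: one must check that the full $2$-categorical structure of the $\mathscr{H}$-action survives the passage from $\Tilt_0(\textbf{G})$ to $\Rep_0(\textbf{G})$. I expect this to be routine given full faithfulness of the inclusion of tilting modules and exactness of wall-crossing functors, but it is the one place where the argument is not purely a citation of earlier results.
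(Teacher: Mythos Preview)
Your proposal is correct and follows essentially the same route as the paper: the paper assembles the graded $\mathscr{H}$-action on $\Smith_{\IW}^{\parity}(\Fl,\bk)$ from \S\ref{s:cons} and Proposition~\ref{bsact}, de-grades to obtain the action on $\Smith_\IW$, transports it to $\Tilt_0(\textbf{G})$ via $i^\natural$ and Iwahori--Whittaker geometric Satake, identifies $(q^s)_*^\Smith$ with $T^s$ by the main calculation of \S6.3 (and hence $(q^s)_\Smith^*(q^s)_*^\Smith$ with $\theta_s$ by adjunction), and finally invokes \cite[Rmk~5.2(1)]{rw} to pass from $\Tilt_0(\textbf{G})$ to $\Rep_0(\textbf{G})$. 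Your sketch of the prolongation step is slightly more explicit than the paper's bare citation, but the argument is the same.
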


The following is an important corollary, already derived by Riche--Williamson, yielding the character formula for tilting modules referenced in the introduction. See \cite[\textsection 5.2]{rw} and \cite[\textsection 1.4]{rw} for proofs and additional discussion.

\begin{cor} \label{maincor}
The action of $\mathscr{H}$ on the object $T(\lambda_0) \in \Tilt(\Rep_0(\textbf{G}))$ descends to an additive functor
$$\Psi: \overline{\mathscr{H}} \to \Tilt(\Rep_0(\textbf{G}))$$
which realises $\overline{\mathscr{H}}$ as a \textit{graded enhancement} of $\Tilt(\Rep_0(\textbf{G}))$: that is, $\Psi$ induces an equivalence between the de-grading of $\overline{\mathscr{H}}$ and $\Tilt(\Rep_0(\textbf{G}))$.
\end{cor}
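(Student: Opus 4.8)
The plan is to let $\mathscr{H}$ act on the single object $T(\lambda_0)$, show the resulting functor descends to the antispherical quotient, and then identify its de-grading with $\Tilt(\Rep_0(\textbf{G}))$; given Theorem \ref{main} this is the formal argument of Riche--Williamson \cite[\textsection 5.2]{rw}, which we outline.

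First, the action $a\colon \mathscr{H}\to\End(\Rep_0(\textbf{G}))$ of Theorem \ref{main} restricts to an action on $\Tilt_0=\Tilt(\Rep_0(\textbf{G}))$, since each wall-crossing functor $\theta_s$ preserves tilting objects. Hence there is a right $\mathscr{H}$-module functor $\widetilde{\Psi}\colon\mathscr{H}\to\Tilt_0$ out of the regular module, $B\mapsto a(B)(T(\lambda_0))$, characterised by $\widetilde{\Psi}(\mathbf{1})=T(\lambda_0)$ and $\widetilde{\Psi}(B\star C)=\widetilde{\Psi}(B)\cdot C$. To see $\widetilde{\Psi}$ factors through $\overline{\mathscr{H}}$ I would check that $\widetilde{\Psi}(B_w)=0$ whenever $w\in W\setminus{}^{\mathrm{f}}W$: picking $s\in\Sf$ with $sw<w$ and a reduced expression $w=sw'$, the object $B_w$ is a direct summand of $B_s\star B_{w'}$, so $\widetilde{\Psi}(B_w)$ is a summand of $\widetilde{\Psi}(B_s\star B_{w'})=\theta_s(T(\lambda_0))\cdot B_{w'}$, and it thus suffices that $\theta_s(T(\lambda_0))=0$ for every finite simple reflection $s$. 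Since $\lambda_0$ is the minimal weight of its block we have $T(\lambda_0)=\Delta(\lambda_0)=\nabla(\lambda_0)=\mathsf{L}(\lambda_0)$, while $\mu_s$ lies on a finite wall on the non-dominant side of $\lambda_0$, so translation of $\mathsf{L}(\lambda_0)$ onto that wall vanishes (\cite[\textsection II.7]{jan}); under the equivalences of Sections \ref{sec:asm}--\ref{sec:asq} and the Iwahori--Whittaker Satake equivalence this is precisely the counterpart of the vanishing $\mathsf{Av}(\mathscr{E}(w))=0$ recorded in \eqref{avvanish}. We obtain $\Psi\colon\overline{\mathscr{H}}\to\Tilt_0$, a right $\mathscr{H}$-module functor with $\Psi(\overline{B_\varnothing})=T(\lambda_0)$.

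It then remains to show $\Psi$ induces an equivalence between the de-grading of $\overline{\mathscr{H}}$ and $\Tilt_0$. Both sides are Krull--Schmidt with indecomposables indexed by ${}^{\mathrm{f}}W$ --- the $\overline{B_w}$ on one side, the $\mathsf{T}(w\bullet_\ell\lambda_0)$ on the other --- and $\Psi$ respects these indexings: arguing by induction on $\ell(w)$, $\Psi(\overline{B_w})$ is built from $T(\lambda_0)$ by a chain of wall-crossing functors and contains $\mathsf{T}(w\bullet_\ell\lambda_0)$ as a summand, which gives essential surjectivity. For full faithfulness after de-grading one compares morphism spaces: $\bigoplus_n\Hom_{\overline{\mathscr{H}}}(\overline{B_w},\overline{B_v}\langle n\rangle)$ and $\Hom_{\Tilt_0}\big(\mathsf{T}(w\bullet_\ell\lambda_0),\mathsf{T}(v\bullet_\ell\lambda_0)\big)$ have the same dimension --- both computed by antispherical $p$-Kazhdan--Lusztig combinatorics --- and $\Psi$ carries the ``standard'' filtration of objects of $\overline{\mathscr{H}}$ to $\Delta$-filtrations of tilting modules inducing isomorphisms on associated graded; alternatively one runs the comparison through $\overline{\mathscr{H}}\cong\Parity_{\IW,\mathbb{G}_m}(\Fl,\bk)$ (Corollary \ref{gract}), the factorisation \eqref{tri}, the decomposition \eqref{smcop}, and \eqref{crux} together with Satake, recognising $\Psi$ --- by uniqueness of right $\mathscr{H}$-module functors out of the cyclic module $\overline{\mathscr{H}}$ --- as the de-grading of an already-established equivalence.

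I expect the main obstacle to be the bookkeeping in the last paragraph: matching the grading shift $\langle 1\rangle$ on $\overline{\mathscr{H}}$ with the isomorphism $\id\cong[2]$ of type $e_Y$ and the $H^\bullet_{\mathbb{G}_m}(\pt,\bk)$-linear collapse built into \eqref{quothom} and \eqref{tri}, and confirming that the module functor $\Psi$ defined by acting on $T(\lambda_0)$ genuinely coincides with the transported module structure rather than merely paralleling it. These points, and the Hom-space comparison, are carried out in \cite[\textsection 5.2]{rw}; given Theorem \ref{main} the remaining argument is purely formal, so we do not reproduce it here.
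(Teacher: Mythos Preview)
Your proposal is correct and aligns with the paper's own treatment: the paper does not give an independent proof of this corollary but simply cites \cite[\S 5.2]{rw} and \cite[\S 1.4]{rw}, exactly as you do, noting that once Theorem~\ref{main} is established the remaining argument is formal. Your outline of that formal argument---factoring through $\overline{\mathscr{H}}$ via the vanishing $\theta_s(T(\lambda_0))=0$ for $s\in S_{\mathrm f}$, then matching indecomposables and Hom-spaces---is accurate and goes a bit beyond what the paper itself records.
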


\begin{rmk}
We have established Theorem \ref{main} and deduced Corollary \ref{maincor} over the over the finite field $\bk$, but they hold over any extension field $\bk_0$ of $\bk$. Indeed, this can be seen via a change of base field for the category of representations and the action, since the canonical functor affords an equivalence of categories,
$$\Rep(G_{\bk_0}) \cong \bk_0 \otimes_\bk \Rep(G).$$
This remark will also apply to our more complete Theorem \ref{final} below.
\end{rmk}

\subsection{Analysis of morphisms}
It remains to examine the actions of the generating morphisms of $\mathscr{H}$. We hope to show there exist counit--unit pairs $(\varepsilon, \eta): T_s \dashv T^s$ and $(\psi, \varphi): T^s \dashv T_s$ such that \begin{equation} \label{realfirst}
a\left (
  \begin{array}{c}
    \begin{tikzpicture}[thick,scale=0.07,baseline]
      \draw (0,-5) to (0,0);
      \node at (0,0) {$\bullet$};
      \node at (0,-6.7) {\tiny $s$};
    \end{tikzpicture}
  \end{array}  \right ) = \varepsilon: \theta_s \to \id, \quad a \left (
  \begin{array}{c}
    \begin{tikzpicture}[thick,baseline,xscale=0.07,yscale=-0.07]
      \draw (0,-5) to (0,0);
      \node at (0,0) {$\bullet$};
      \node at (0,-6.7) {\tiny $s$};
    \end{tikzpicture}
  \end{array}  \right ) = \varphi: \id \to \theta_s,
\end{equation}
and
\begin{equation} \label{realsecond}
a\left ( 
  \begin{array}{c}
    \begin{tikzpicture}[thick,baseline,scale=0.07]
      \draw (-4,5) to (0,0) to (4,5);
      \draw (0,-5) to (0,0);
      \node at (0,-6.7) {\tiny $s$};
      \node at (-4,6.4) {\tiny $s$};
            \node at (4,6.4) {\tiny $s$};
    \end{tikzpicture}
  \end{array} \right ) = T_s \eta T^s: \theta_s \to \theta_s \theta_s, \quad a\left ( 
  \begin{array}{c}
    \begin{tikzpicture}[thick,baseline,scale=-0.07]
      \draw (-4,5) to (0,0) to (4,5);
      \draw (0,-5) to (0,0);
      \node at (0,-6.7) {\tiny $s$};
            \node at (-4,6.4) {\tiny $s$};
            \node at (4,6.4) {\tiny $s$};
    \end{tikzpicture}
  \end{array} \right ) = T_s \psi T^s: \theta_s \theta_s \to \theta_s.
 \end{equation}
To do this, we make an argument via total cohomology, as in \cite[\textsection 10.5]{rw}. For notational convenience, we set $R = R_{\mathbb{K}}$ from here on. Note first that by \cite[Thm. 1.3(2)]{to} we have isomorphisms
$$R \cong H_{I \rtimes \Gm}^\bullet(\pt, \mathbb{K}), \quad R^s \cong H_{\mathcal{P}^s \rtimes \Gm}^\bullet(\pt, \mathbb{K}).$$
There hence exist morphisms
\begin{equation} \label{ressc}
R \otimes_{\mathbb{K}} R \to H_{I \rtimes \Gm}^\bullet(\mathscr{Fl},\mathbb{K}), \quad \text{resp.} \quad R \otimes_{\mathbb{K}} R^s \to H_{I \rtimes \Gm}^\bullet(\Fl^s,\mathbb{K}),
\end{equation}
induced by the two actions of $I \rtimes \Gm$ on $LG$, resp. the left action of $I \rtimes \Gm$ and right action of $\mathcal{P}^s \rtimes \Gm$ on $\Fl^s$; for the latter, we use that
$$H_{I \rtimes \Gm}^\bullet(\Fl^s,\mathbb{K}) \cong H_{\mathcal{P}^s \rtimes \Gm}^\bullet((I \rtimes \Gm) \backslash (LG \rtimes \Gm),\mathbb{K}).$$
Composing the total cohomology functor $H_{I \rtimes \Gm}^\bullet(\Fl,-)$, resp. $H_{I \rtimes \Gm}^\bullet(\Fl^s,-)$, with the restrictions of scalars induced by \eqref{ressc}, we obtain functors
\begin{align*}
& \mathbb{H}: \Parity_{I \rtimes \Gm}(\Fl,\mathbb{K}) \to \text{$(R,R)$-bimod}^\mathbb{Z}, \quad \text{resp.} \\
& \mathbb{H}^s: \Parity_{I \rtimes \Gm}(\Fl^s,\mathbb{K}) \to \text{$(R,R^s)$-bimod}^\mathbb{Z}.
\end{align*}
Let us divert briefly to a general setting, for a technical observation to be used later. Consider a commutative diagram of commutative graded rings,
\begin{equation} \label{gradedsq}
\begin{tikzcd}
  B \arrow{d}{f} \arrow{r}{\beta} & B_0 \arrow{d}{f_0} \\
A \arrow{r}{\alpha} & A_0.
\end{tikzcd}
\end{equation}
\begin{lem} \label{grsq}
Each of the maps in \eqref{gradedsq} induces a functor of restriction of scalars between the associated categories of graded right modules, in such a way that the induced square of categories commutes up to natural isomorphism. Furthermore, there is a morphism of graded right $A$-modules (natural in the $B_0$-module $M$),
\begin{equation} \label{greq}
M_B \otimes_B A \to (M \otimes_{B_0} A_0)_A, \quad m \otimes a \mapsto m \otimes \alpha(a),
\end{equation}
\end{lem}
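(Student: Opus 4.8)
The plan is to treat the two assertions of Lemma~\ref{grsq} separately, the first being purely formal and the second a short verification built on the universal property of the tensor product.

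For the first assertion I would recall that restriction of scalars along a homomorphism of graded rings $\phi\colon R\to S$ sends a graded right $S$-module $N$ to the graded right $R$-module with the same underlying graded abelian group and action $n\cdot r=n\phi(r)$, and that this construction is strictly functorial: it is compatible with composition of ring maps on the nose. I would apply this to the four edges of \eqref{gradedsq}, adopting the conventions that $A$, resp.\ $A_0$, is viewed as a $(B,A)$-bimodule, resp.\ $(B_0,A_0)$-bimodule, via $f$, resp.\ $f_0$. Chasing a graded right $A_0$-module around the resulting square of categories, the composite $A_0\text{-mod}\to A\text{-mod}\to B\text{-mod}$ is restriction of scalars along $\alpha\circ f$, while $A_0\text{-mod}\to B_0\text{-mod}\to B\text{-mod}$ is restriction along $f_0\circ\beta$; these agree since \eqref{gradedsq} commutes, so the square of categories commutes strictly, hence in particular up to natural isomorphism. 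Nothing more is needed here than unwinding definitions.

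For the second assertion I would fix a graded right $B_0$-module $M$ and produce $\Theta_M$ via the universal property of $\otimes_B$. First I define $M\times A\to (M\otimes_{B_0}A_0)_A$ by $(m,a)\mapsto m\otimes\alpha(a)$, where $M$ carries its restricted structure $M_B$ and $A$ its left $B$-structure via $f$, and check that this map is biadditive, homogeneous of degree $0$ (as $\alpha$ preserves degree), and $B$-balanced. The balancing check is the one place the commutativity of \eqref{gradedsq} is used: for $b\in B$ one has $m\otimes\alpha(f(b)\,a)=m\otimes\bigl(f_0(\beta(b))\,\alpha(a)\bigr)=(m\cdot\beta(b))\otimes\alpha(a)$, invoking $\alpha\circ f=f_0\circ\beta$, commutativity of $A_0$, and the $B_0$-balancing relation in $M\otimes_{B_0}A_0$; this is exactly the image of $(mb,a)$. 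The universal property then yields $\Theta_M\colon M_B\otimes_B A\to(M\otimes_{B_0}A_0)_A$ with the stated formula on elementary tensors. I would then verify right $A$-linearity on elementary tensors, using that $A$ acts on $(M\otimes_{B_0}A_0)_A$ through $\alpha$ and that $\alpha(aa')=\alpha(a)\alpha(a')$, and naturality in $M$ by checking, for a morphism $g\colon M\to M'$ of graded right $B_0$-modules, that $\Theta_{M'}\circ(g_B\otimes\id_A)$ and $(g\otimes\id_{A_0})_A\circ\Theta_M$ agree on elementary tensors.

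I do not expect a genuine obstacle: the only step calling for care is the verification of the $B$-balancing relation defining $\Theta_M$, which is precisely where the hypothesis that \eqref{gradedsq} commutes enters; the remaining points (right $A$-linearity, homogeneity, naturality) are routine formal manipulations with graded tensor products and restriction of scalars.
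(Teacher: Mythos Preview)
Your argument is correct and follows essentially the same approach as the paper: both parts are handled by unwinding the definition of restriction of scalars and then invoking the universal property of the graded tensor product, with the $B$-balancing check resting on the commutativity $\alpha\circ f=f_0\circ\beta$. If anything, you are more thorough than the paper, which does not spell out the $A$-linearity or naturality verifications.
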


\begin{proof}
Any graded right $A$-module $M$ becomes a graded right $B$-module under the action $m \cdot b = m f(b)$; indeed, if $b \in B^i$ and $m \in M^j$ are homogeneous, then $f(b) \in A^i$ and hence $m \cdot b = m f(b)  \in M^{i+j}$. It is clear that the equality $\alpha \circ f = f_0 \circ \beta$ translates into a natural isomorphism between functors of restriction of scalars. Hence it remains only to construct the morphism \eqref{greq}. For this, recall that when its domain and target are suitably graded, the natural map $$M_B \times A \to M_B \otimes_B A$$ is graded and identifies graded $B$-balanced maps $M_B \times A \to Q$ with graded abelian group homomorphisms $M_B \otimes_B A \to Q$ for any graded abelian group $Q$; see \cite[\href{https://stacks.math.columbia.edu/tag/09LL}{Tag 09LL}]{stacks-project}. It is therefore enough to note that the assignment $(m,a) \mapsto m \otimes \alpha(a)$ is graded and $B$-balanced.
\end{proof}

\begin{prop} \label{faithful}
The functors $\mathbb{H}$ and $\mathbb{H}^s$ are faithful and fit into the two commutative squares displayed below:
\[
\begin{tikzcd}
  \Parity_{I \rtimes \Gm}(\Fl,\mathbb{K}) \arrow[d,shift right,blue] \arrow{r}{\mathbb{H}} & \text{$(R,R)$-bimod}^\mathbb{Z} \arrow[d,shift right,blue] \\
\Parity_{I \rtimes \Gm}(\Fl^s,\mathbb{K}) \arrow[u, shift right,red] \arrow{r}{\mathbb{H}^s} & \text{$(R,R^s)$-bimod}^\mathbb{Z}, \arrow[u, shift right,red]
\end{tikzcd}
\]
where the left vertical arrows are $(q^s)_*$ and $(q^s)^*$, while the right vertical arrows are restriction $(-) \otimes_R R_{R^s}$ and induction $(-) \otimes_{R^s} R_R$.
\end{prop}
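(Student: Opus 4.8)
The plan is to prove the two commutative squares first -- they carry all the geometric content -- and then to deduce faithfulness of $\mathbb{H}^s$ from that of $\mathbb{H}$. Throughout one uses that $\mathbb{K}$ has characteristic zero, so that parity complexes coincide with intersection cohomology complexes and the categories $\Parity_{I \rtimes \Gm}(\Fl,\mathbb{K})$ and $\Parity_{I \rtimes \Gm}(\Fl^s,\mathbb{K})$ are the additive Karoubi envelopes of the full subcategories generated by shifts of the Bott--Samelson objects $\mathscr{E}(\underline{w})$, respectively of their pushforwards $(q^s)_*\mathscr{E}(\underline{w})$.

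Faithfulness of $\mathbb{H}$ is the total-cohomology argument of \cite[\S 10.5]{rw}. Concretely, one computes the $I \rtimes \Gm$-equivariant cohomology of Bott--Samelson varieties as $(R,R)$-bimodules,
$$\mathbb{H}(\mathscr{E}(\underline{w})) \cong R \otimes_{R^{s_1}} R \otimes_{R^{s_2}} \cdots \otimes_{R^{s_k}} R,$$
naturally in $\underline{w}$ (with the two $R$-actions of \eqref{ressc}); these bimodules are free on each side, and morphism spaces between them -- hence between all objects of $\Parity_{I \rtimes \Gm}(\Fl,\mathbb{K})$ -- are detected after applying $H^\bullet_{I \rtimes \Gm}$. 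The argument is insensitive to the coefficient field (here $\mathbb{K}$), to the \'etale context, and to the extra $\Gm$-equivariance, which is handled as in the proof of Theorem \ref{firsteq} via the surjection $I \rtimes \Gm \to T \times \Gm$ with unipotent kernel; and faithfulness passes to the additive Karoubi envelope.

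The heart of the matter is the pair of squares. For the square with left arrow $(q^s)_*$ and right arrow the restriction $(-) \otimes_R R_{R^s}$: since $q^s$ is $I \rtimes \Gm$-equivariant, for parity $\mathscr{E}$ on $\Fl$ there is a canonical isomorphism $\mathbb{H}^s((q^s)_*\mathscr{E}) \cong \mathbb{H}(\mathscr{E})$ of \emph{left} $R$-modules, and one must check that the right $R^s$-action on the source -- which comes from the right $\mathcal{P}^s \rtimes \Gm$-action on $\Fl^s$ -- is the restriction along $R^s \hookrightarrow R$ of the right $R$-action on $\mathbb{H}(\mathscr{E})$. For the square with left arrow $(q^s)^*$ and right arrow the induction $(-) \otimes_{R^s} R$: using that $q^s$ is a $\mathbb{P}^1$-bundle (so $(q^s)^* \cong (q^s)^![-2]$ and $(q^s)_*(q^s)^* \cong \id \oplus [-2](-1)$), the projection formula and the Leray--Hirsch decomposition give a left-$R$-module isomorphism $\mathbb{H}((q^s)^*\mathscr{F}) \cong \mathbb{H}^s(\mathscr{F}) \oplus \mathbb{H}^s(\mathscr{F})[-2](-1)$, matching the splitting $R \cong R^s \oplus R^s \cdot \xi$ afforded by the equivariant Euler class $\xi$ of $q^s$; again one must identify the \emph{right} $R$-module structures. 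In both cases I would use Lemma \ref{grsq}, applied to a suitable commutative square of equivariant cohomology rings assembled from \eqref{ressc} and its $q^s$-analogue, to manufacture the candidate natural transformation, then check it is an isomorphism on the generating Bott--Samelson-type objects, where both sides are explicit free bimodules, and conclude by additivity. This is the $\Gm$-equivariant, \'etale, $\mathbb{K}$-coefficient counterpart of the computations in \cite[\S 10.5]{rw}, with the notational substitutions of Theorem \ref{firsteq}.

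Granting the $(q^s)^*$-square, faithfulness of $\mathbb{H}^s$ is then formal: since $q^s$ is a $\mathbb{P}^1$-bundle, $(q^s)^*$ is faithful (if $(q^s)^* a = 0$ then $a \oplus a[-2](-1) \cong (q^s)_*(q^s)^* a = 0$, so $a = 0$); hence $((-) \otimes_{R^s} R) \circ \mathbb{H}^s \cong \mathbb{H} \circ (q^s)^*$ is a composite of faithful functors and therefore faithful, which forces $\mathbb{H}^s$ faithful. I expect the main obstacle to be the bookkeeping in the previous paragraph: matching the right module structures in the two squares requires setting up the right $R$- and $R^s$-actions on the relevant equivariant cohomology groups with some care, and verifying that the candidate transformations are isomorphisms; everything else is either formal or a routine transcription of \cite{rw}.
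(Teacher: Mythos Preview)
Your proposal is correct and follows the same overall architecture as the paper: use Lemma~\ref{grsq} to manufacture the candidate natural transformation for the induction square, verify it on generating objects, and pass to the whole parity category. The differences are in the details of the verification and of faithfulness.

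For the induction square, the paper checks the candidate map \eqref{cohnat} not via Leray--Hirsch but via the convolution identity $(q^s)^*(q^s)_*\mathscr{E}(w)\cong\mathscr{E}(w)\star\mathscr{E}(s)[-1]$ together with monoidality of $\mathbb{H}$, yielding directly
\[
\mathbb{H}((q^s)^*(q^s)_*\mathscr{E}(w))\;\cong\; B_w\otimes_R B_s\langle-1\rangle\;\cong\;(B_w\otimes_R R_{R^s})\otimes_{R^s}R\;\cong\;\mathbb{H}^s((q^s)_*\mathscr{E}(w))\otimes_{R^s}R.
\]
This sidesteps your Leray--Hirsch bookkeeping of the right $R$-action entirely, at the cost of relying on the Soergel-bimodule description of $\mathbb{H}$ already established in Theorem~\ref{firsteq}. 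Your approach is more self-contained but, as you anticipate, the matching of right module structures is where the work lies. The paper then extends from $(q^s)_*\mathscr{E}(w)$ to the indecomposables $\mathscr{E}^s(w)$ by induction on length and the five lemma; your ``conclude by additivity'' is in fact cleaner, since a natural transformation between additive functors that is an isomorphism on $A\oplus B$ is automatically one on each summand.

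For faithfulness, the paper simply cites \cite[Rmk.~3.19]{mr} for both $\mathbb{H}$ and $\mathbb{H}^s$ at once, whereas you deduce faithfulness of $\mathbb{H}^s$ from that of $\mathbb{H}$ via the induction square and faithfulness of $(q^s)^*$. Your deduction is valid and pleasant, though it makes faithfulness of $\mathbb{H}^s$ logically dependent on the square rather than independent of it.
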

\begin{proof}
For the faithfulness of $\mathbb{H}$ and $\mathbb{H}^s$, see \cite[Rmk. 3.19]{mr}. To show the square commutes up to natural isomorphism, we work with an extended diagram:
\[
\begin{tikzcd}
  \Parity_{I \rtimes \Gm}(\Fl,\mathbb{K}) \arrow[d,shift right,blue] \arrow{r}{H} & \text{$H_{I \rtimes \Gm}^\bullet(\Fl,\mathbb{K})$-mod}^{\mathbb{Z}} \arrow{r}{r} \arrow[d,shift right,blue] &  \text{$(R,R)$-bimod}^\mathbb{Z} \arrow[d,shift right,blue] \\
\Parity_{I \rtimes \Gm}(\Fl^s,\mathbb{K}) \arrow[u, shift right,red] \arrow{r}{H^s} & \text{$H_{I \rtimes \Gm}^\bullet(\Fl^s,\mathbb{K})$-mod}^{\mathbb{Z}} \arrow{r}{r^s} \arrow[u,shift right,red] & \text{$(R,R^s)$-bimod}^\mathbb{Z}. \arrow[u, shift right,red]
\end{tikzcd}
\]
Here the functors $r$ and $r^s$ are restrictions of scalars along the maps \eqref{ressc}, and the middle vertical arrows are restriction and induction. The left-most horizontal functors are known to be fully faithful, again by \cite[Rmk. 3.19]{mr}. Now, by definition, there is a natural isomorphism of graded $H^\bullet_{I \rtimes \Gm}(\Fl^s,\mathbb{K})$-modules,
$$\text{H}^\bullet_{I \rtimes \Gm}(\Fl^s,(q^s)_* \mathscr{F}) \cong \text{H}^\bullet_{I \rtimes \Gm}(\Fl,\mathscr{F}),$$
where the latter is a right module via the map $H_{I \rtimes \Gm}^\bullet(\Fl^s,\mathbb{K}) \to H_{I \rtimes \Gm}^\bullet(\Fl,\mathbb{K})$ induced by $q^s$. Notice that we have an obvious morphism 
\begin{equation} \label{hind}
H^s(\mathcal{F}) \otimes_{H_{I \rtimes \Gm}^\bullet(\Fl^s,\mathbb{K})} H_{I \rtimes \Gm}^\bullet(\Fl,\mathbb{K}) \to H((q^s)^*(\mathcal{F})).
\end{equation}
Meanwhile, Lemma \ref{grsq} proves that the square of restriction functors between graded module categories commutes; thus our claim holds for the intertwining of $(q^s)_*$ and restriction by $\mathbb{H}$ and $\mathbb{H}^s$. The same lemma provides a natural transformation
$\text{ind} \circ r^s \to r \circ \text{ind}.$
Composing on the right with $H^\bullet_{I \rtimes \Gm}(\Fl^s,-)$ and using \eqref{hind}, we obtain a morphism
\begin{equation} \label{cohnat}
\text{ind} \circ \mathbb{H}^s \to \mathbb{H} \circ (q^s)^*.
\end{equation}
We conclude by showing this is an isomorphism for objects in $\Parity_{I \rtimes \Gm}(\Fl^s,\mathbb{K})$. Note first that, for $w \in W^s$,
\begin{align*}
\mathbb{H}((q^s)^* (q_s)_* \mathscr{E}(w)) = \mathbb{H}(\mathscr{E}(w) \star \mathscr{E}(s) [ -1 ]) &= B_w \otimes_R B_s \langle -1 \rangle \\
&= (B_w \otimes_R R_{R^s}) \otimes_{R^s} R \\
&= \mathbb{H}^s((q^s)_* \mathscr{E}(w)) \otimes_{R^s} R,
\end{align*}
since $(q_s)_* \mathscr{E}(w) \cong \mathscr{E}(w) \star \mathscr{E}^s(1)$ and the total cohomology functors are monoidal. Thus \eqref{cohnat} is an isomorphism for objects of the form $(q^s)_* \mathscr{E}(w)$, $w \in W^s$. Now, every indecomposable object $\mathscr{E}^s(w)$ in the Krull--Schmidt category $\Parity_{I \rtimes \Gm}(\Fl^s,\mathbb{K})$ is a multiplicity-one factor of $(q^s)_* \mathscr{E}(w)$, so our claim follows by induction on the length of $w$ and the five lemma.
\end{proof}

Let us now consider the following morphisms in $\Parity_{I \rtimes \Gm}(\Fl,\mathbb{L})$, where $\L$ is a coefficient ring suppressed from the notation:
\begin{align*}
\Delta \left (
  \begin{array}{c}
    \begin{tikzpicture}[thick,scale=0.07,baseline]
      \draw (0,-5) to (0,0);
      \node at (0,0) {$\bullet$};
      \node at (0,-6.7) {\tiny $s$};
    \end{tikzpicture}
  \end{array}  \right ) &= u_s: \mathscr{E}(s) \to \mathscr{E}(1)[1], \quad \Delta \left (
  \begin{array}{c}
    \begin{tikzpicture}[thick,baseline,xscale=0.07,yscale=-0.07]
      \draw (0,-5) to (0,0);
      \node at (0,0) {$\bullet$};
      \node at (0,-6.7) {\tiny $s$};
    \end{tikzpicture}
  \end{array}  \right ) = \ell_s: \mathscr{E}(1) \to \mathscr{E}(s)[1]. \\
  \Delta \left ( 
  \begin{array}{c}
    \begin{tikzpicture}[thick,baseline,scale=0.07]
      \draw (-4,5) to (0,0) to (4,5);
      \draw (0,-5) to (0,0);
      \node at (0,-6.7) {\tiny $s$};
      \node at (-4,6.4) {\tiny $s$};
            \node at (4,6.4) {\tiny $s$};
    \end{tikzpicture}
  \end{array} \right ) &= b_s: \mathscr{E}(s) \to \mathscr{E}(ss)[-1], \quad \Delta \left ( 
  \begin{array}{c}
    \begin{tikzpicture}[thick,baseline,scale=-0.07]
      \draw (-4,5) to (0,0) to (4,5);
      \draw (0,-5) to (0,0);
      \node at (0,-6.7) {\tiny $s$};
            \node at (-4,6.4) {\tiny $s$};
            \node at (4,6.4) {\tiny $s$};
    \end{tikzpicture}
  \end{array} \right ) = c_s: \mathscr{E}(ss) \to \mathscr{E}(s)[-1].
\end{align*}
The first two of these give rise to natural transformations,
$$\widetilde{\varepsilon} = (-) \star u_s[-1]: (q^s)^* (q^s)_* \to \id, \quad \widetilde{\varphi} = (-) \star \ell_s: \id \to (q^s)^* (q^s)_*[2].$$

\begin{prop} \label{first}
If the coefficient ring is $\bk$, $\mathbb{O}$, or $\mathbb{K}$, then $\widetilde{\varepsilon}$ is a counit for $(q^s)^* \dashv (q^s)_*$ and $\widetilde{\varphi}$ is a unit for $(q^s)_* \dashv (q^s)^*[2]$. 
\end{prop}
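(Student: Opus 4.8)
The plan is to identify $\widetilde{\varepsilon}$, up to an invertible scalar in $\L$, with the canonical counit $\varepsilon^{\mathrm{can}}$ of the adjunction $(q^s)^* \dashv (q^s)_*$, and $\widetilde{\varphi}$, again up to a unit of $\L$, with the canonical unit $\eta^{\mathrm{can}}$ of $(q^s)_* = (q^s)_! \dashv (q^s)^! \cong (q^s)^*[2]$. This is enough: if $\widetilde{\varepsilon} = c\,\varepsilon^{\mathrm{can}}$ with $c \in \L^\times$, then $\widetilde{\varepsilon}$ together with the corresponding canonical unit rescaled by $c^{-1}$ still satisfies the counit--unit equations (since $(\varepsilon^{\mathrm{can}},\eta^{\mathrm{can}})$ does), so $\widetilde{\varepsilon}$ is \emph{a} counit; symmetrically for $\widetilde{\varphi}$. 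Thus everything comes down to comparing two natural transformations up to a unit scalar.

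For that comparison I would reduce to the monoidal unit $\mathscr{E}(1) = \underline{\L}_{\Fl_1}$. By definition $\widetilde{\varepsilon}_F = F \star u_s[-1]$, so $\widetilde{\varepsilon}_{\mathscr{E}(1)} = u_s[-1]$ and $\widetilde{\varepsilon}_F = F \star \widetilde{\varepsilon}_{\mathscr{E}(1)}$; moreover, by \eqref{concat} with \cite[Lemma 9.6]{rw}, convolution identifies $(q^s)^*(q^s)_*$ with $(-)\star\mathscr{E}(s)[-1]$ and $(q^s)^*(q^s)_*\mathscr{E}(1)$ with $\mathscr{E}(s)[-1] = \underline{\L}_{\overline{\Fl_s}}$. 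On the other hand, realising $(q^s)^*(q^s)_*$ as pushforward--pullback along the two projections of the fibre product $\Fl \times_{\Fl^s} \Fl$ and applying restriction to the diagonal, one sees $\varepsilon^{\mathrm{can}}$ evaluated at $\mathscr{E}(1)$ is exactly the restriction of the constant sheaf $\underline{\L}_{\overline{\Fl_s}}$ to the base point $\Fl_1 \in \overline{\Fl_s}$. But $u_s$ is, by the construction recalled in \cite[(10.4.3)]{rw} (which makes sense over each of $\bk$, $\mathbb{O}$, $\mathbb{K}$ once the orientation of $\mathbb{P}^1$ fixed there is chosen), precisely this restriction-to-the-base-point morphism up to a unit of $\L$. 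Tracing the identification of the convolution correspondence $\mathcal{P}^s \times^I \Fl$ with $\Fl \times_{\Fl^s} \Fl$ — under which the base point of the fibre $\mathcal{P}^s/I$ goes to the diagonal — then yields $\widetilde{\varepsilon} = c\,\varepsilon^{\mathrm{can}}$ with $c \in \L^\times$.

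For $\widetilde{\varphi}$ I would argue by Verdier duality: $\ell_s$ is the Verdier dual of $u_s$ up to shift and Tate twist (again \cite[(10.4.3)]{rw}), and $\mathbb{D}$ exchanges $(q^s)^* \dashv (q^s)_*$ with $(q^s)_! \dashv (q^s)^!$, carrying counits to units; combined with the previous step this gives $\widetilde{\varphi} = c'\,\eta^{\mathrm{can}}$ for some $c' \in \L^\times$. As a cross-check, and as an alternative route when $\L = \mathbb{K}$, one can instead push everything through the faithful monoidal functor $\mathbb{H}$ of Proposition \ref{faithful}: it carries $u_s$, $\ell_s$ to unit multiples of the multiplication $R \otimes_{R^s} R \to R$ and of the Frobenius comultiplication $R \to R \otimes_{R^s} R$, and $(q^s)^*(q^s)_*$ to $(-) \otimes_R (R \otimes_{R^s} R)$, so that $\mathbb{H}(\widetilde{\varepsilon})$ and $\mathbb{H}(\widetilde{\varphi})$ become unit multiples of the counit of $\mathrm{ind} \dashv \mathrm{res}$ and the unit of $\mathrm{res} \dashv \mathrm{ind}$ for the Frobenius extension $R/R^s$; faithfulness of $\mathbb{H}$ settles $\L = \mathbb{K}$, and then \eqref{extsc} — torsion-freeness over $\mathbb{O}$ for $\mathbb{O} \hookrightarrow \mathbb{K}$, and compatibility of the diagrammatic dots with reduction for $\mathbb{O} \twoheadrightarrow \bk$ — propagates the identities to $\mathbb{O}$ and $\bk$.

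The main difficulty I anticipate is not the abstract reduction of the first paragraph but the compatibility bookkeeping: in the geometric approach, checking that the orientation of $\mathbb{P}^1$ fixed in \cite[(10.4.3)]{rw} is the one implicit in the isomorphism $(q^s)^! \cong (q^s)^*[2]$ used throughout, so that the Verdier dual of $\widetilde{\varepsilon}$ is honestly $\widetilde{\varphi}$ and not a twisted variant; and in the cohomological approach, verifying that the commutative square of Proposition \ref{faithful} is compatible with the adjunction $2$-cells (so that $\mathbb{H}$ really does send the geometric $\varepsilon^{\mathrm{can}}$, $\eta^{\mathrm{can}}$ to the algebraic ones), which follows from compatibility of equivariant cohomology with proper pushforward and the projection formula but deserves to be spelled out. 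Either way, by the first paragraph the residual unit-scalar ambiguity is immaterial.
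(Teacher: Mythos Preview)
Your proposal is correct; the paper takes your ``cross-check'' route via $\mathbb{H}$, but with a simplification that sidesteps the compatibility issue you flag. Rather than identifying $\widetilde{\varepsilon}$ with the canonical counit $\varepsilon^{\mathrm{can}}$ up to a unit, the paper verifies directly that $\widetilde{\varepsilon}$ is \emph{a} counit, by showing the candidate adjunction map
\[
\widetilde{A}_{\mathscr{F},\mathscr{G}}^{\L}:\ \Hom(\mathscr{F},(q^s)_*\mathscr{G}) \longrightarrow \Hom((q^s)^*\mathscr{F},\mathscr{G}),\qquad f \mapsto \widetilde{\varepsilon}_{\mathscr{G}}\circ (q^s)^*(f),
\]
is a bijection. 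Source and target are finite of the same rank (since the canonical adjunction does exist), so it suffices to prove injectivity; over $\mathbb{K}$ this is done by applying the faithful functor $\mathbb{H}$ of Proposition~\ref{faithful}, writing $\mathbb{H}(\widetilde{\varepsilon}_{\mathscr{G}})=\mathbb{H}(\mathscr{G})\otimes_R m_s$, and chasing the resulting bimodule equation to $\mathbb{H}^s(f)=0$. The symbol $\varepsilon^{\mathrm{can}}$ never appears, so there is nothing to verify about $\mathbb{H}$ carrying geometric adjunction $2$-cells to algebraic ones --- precisely the bookkeeping you were worried about. The passage to $\mathbb{O}$ and then $\bk$ via \eqref{extsc} is exactly as you describe.

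Your primary geometric route --- reducing to $\mathscr{E}(1)$, recognising $u_s[-1]$ as restriction of $\underline{\L}_{\overline{\Fl_s}}$ to the base point, and tracing the identification of $\Fl \times_{\Fl^s}\Fl$ with the convolution correspondence to see that $\varepsilon^{\mathrm{can}}$ is itself of the form $(-)\star v$ --- is a genuinely different and more conceptual argument. It would yield the sharper conclusion $\widetilde{\varepsilon}=c\,\varepsilon^{\mathrm{can}}$ with $c\in\L^\times$, but at the cost of the compatibility checks you acknowledge (notably that the orientation of $\mathbb{P}^1$ fixed in defining $\ell_s$ matches the one implicit in $(q^s)^!\cong(q^s)^*[2]$, so that the Verdier-duality step for $\widetilde{\varphi}$ is clean). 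The paper's approach trades this geometry for a short explicit bimodule computation.
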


\begin{proof}
For $\mathscr{F} \in \Parity_{I \rtimes \Gm}(\Fl^s,\L)$ and $\mathscr{G} \in \Parity_{I \rtimes \Gm}(\Fl,\L)$, consider maps
\begin{align*}
\widetilde{A}_{\mathscr{F},\mathscr{G}}^\L: \Hom(\mathscr{F},(q^s)_* \mathscr{G}) \to \Hom((q^s)^* \mathscr{F}, \mathscr{G}), \quad f \mapsto \widetilde{\varepsilon}_{\mathscr{G}} \circ (q^s)^*(f); \\
\widetilde{B}_{\mathscr{G},\mathscr{F}}^\L: \Hom((q^s)_* \mathscr{G}, \mathscr{F}) \to \Hom(\mathscr{G}, (q^s)^* \mathscr{F}[2]), \quad f \mapsto (q^s)^*(f)[2] \circ \widetilde{\varphi}_{\mathscr{G}}.
\end{align*}
Because we know the source and target are finite dimensional and isomorphic, it will suffice for our purposes to prove $\widetilde{A}_{\mathscr{F},\mathscr{G}}^\bk$ and $\widetilde{B}_{\mathscr{G},\mathscr{F}}^\bk$ are injective for any $\mathscr{F},\mathscr{G}$. Let us first show this is true for $\widetilde{A}_{\mathscr{F},\mathscr{G}} = \widetilde{A}_{\mathscr{F},\mathscr{G}}^\mathbb{K}$ and $\widetilde{B}_{\mathscr{G},\mathscr{F}} = \widetilde{B}_{\mathscr{G},\mathscr{F}}^\mathbb{K}$.

Suppose therefore that $\widetilde{A}_{\mathscr{F},\mathscr{G}}(f) = 0$. Then
$$0 = \mathbb{H}(\widetilde{A}_{\mathscr{F},\mathscr{G}}(f)) = \mathbb{H}(\widetilde{\varepsilon}_{\mathscr{G}}) \circ \mathbb{H}((q^s)^*(f)),$$
where $\mathbb{H}(\widetilde{\varepsilon}_{\mathscr{G}}) = \mathbb{H}(\mathscr{G}) \otimes_R \mathbb{H}(u_s[-1]) = \mathbb{H}(\mathscr{G}) \otimes_R m_s$, for $m_s: R \otimes_{R^s} R \to R$, $g \otimes h \mapsto gh$, and $$\mathbb{H}((q^s)^*(f)) = \mathbb{H}^s(f) \otimes_{R^s} R: \mathbb{H}^s(\mathscr{F}) \otimes_{R^s} R \to (\mathbb{H}(\mathscr{G}) \otimes_R R_{R^s}) \otimes_{R^s} R,$$ using \cite[\textsection 10.5]{rw} and Prop. \ref{faithful}. But then we can do explicit calculations:
$$0 = (\mathbb{H}(\mathscr{G}) \otimes_R m_s)(\mathbb{H}^s(f) \otimes_{R^s} R)(x \otimes 1) = (m_s \otimes \mathbb{H}(\mathscr{G}))(\mathbb{H}^s(f)(x) \otimes 1)$$
for any $x \in \mathbb{H}^s(\mathscr{F})$. Now, there exists $\chi \in \mathbb{H}(\mathscr{G})$ such that $\mathbb{H}^s(f)(x) = \chi \otimes 1$, and hence
$(\mathbb{H}(\mathscr{G}) \otimes_R m_s)(f(x) \otimes 1) = \chi.$
Since $x$ was arbitrary, we get $f = 0$, proving that $\widetilde{A}_{\mathscr{F},\mathscr{G}}$ is injective and therefore an isomorphism. A suitable unit for $\widetilde{\varepsilon}$ is then given by $\widetilde{A}_{\mathscr{F},(q^s)^* \mathscr{F}}^{-1}(1_{(q^s)^* \mathscr{F}})$.

On the other hand, suppose $\widetilde{B}_{\mathscr{G},\mathscr{F}}(f) = 0$, so that
$$0 = \mathbb{H}(\widetilde{B}_{\mathscr{G},\mathscr{F}}(f)) = \mathbb{H}((q^s)^*(f)) \langle 2 \rangle \circ \mathbb{H}(\widetilde{\varphi}_{\mathscr{G}}),$$
where $\mathbb{H}(\widetilde{\varphi}_{\mathscr{G}}) = \mathbb{H}(\mathscr{G}) \otimes_R \delta_s$, for $\delta_s: R \to R \otimes_{R^s} R \langle 2 \rangle$, $1 \mapsto \tfrac{1}{2}(\alpha_s \otimes 1 + 1 \otimes \alpha_s)$. Hence, for any $x \in \mathbb{H}(\mathscr{G})$,
\begin{align*}
0 = (f \otimes_{R^s} R \langle 2 \rangle)(\mathbb{H}(\mathscr{G}) \otimes_R \delta_s)(2x \otimes 1) &= (f \otimes_{R^s} R \langle 2 \rangle)(x \otimes (\alpha_s \otimes 1 + 1 \otimes \alpha_s)) \\
&= f(x \otimes \alpha_s) \otimes 1 + f(x \otimes 1) \otimes \alpha_s.
\end{align*}
However, in $\mathbb{H}^s(\mathscr{F}) \otimes_{R^s} R \langle 2 \rangle$, an equation $a \otimes 1 = b \otimes \alpha_s$ implies $a = b = 0$, so $f(x \otimes \alpha_s) = f(x \otimes \alpha_s) = 0$ and thus $f = 0$, as desired.

To transfer the conclusion from $\mathbb{K}$ to $\O$ and $\bk$, note that if $\mathscr{F} \in \Parity_{I \rtimes \Gm}(\Fl^s,\O)$ and $\mathscr{G} \in \Parity_{I \rtimes \Gm}(\Fl,\O)$, then
$\widetilde{A}_{\mathbb{K}(\mathscr{F}),\mathbb{K}(\mathscr{G})} = \mathbb{K} \otimes_{\mathbb{O}} \widetilde{A}_{\mathscr{F},\mathscr{G}}^\mathbb{O}$
by the properties of extension of scalars outlined in \textsection 3.8. Since the source of $\widetilde{A}_{\mathscr{F},\mathscr{G}}^\mathbb{O}$ is torsion free, the fact $\widetilde{A}_{\mathbb{K}(\mathscr{F}),\mathbb{K}(\mathscr{G})}$ is an isomorphism implies the same for $\widetilde{A}_{\mathscr{F},\mathscr{G}}^\mathbb{O}$. We may then extend scalars to $\bk$ to deduce that
$\widetilde{A}_{\bk(\mathscr{F}),\bk(\mathscr{G})}^\bk = \bk \otimes_{\mathbb{O}} \widetilde{A}_{\mathscr{F},\mathscr{G}}^\mathbb{O}$
is also an isomorphism. In view of \eqref{bc}, this shows that $\widetilde{A}_{\mathscr{F},\mathscr{G}}^\bk$ is an isomorphism for every $\mathscr{F},\mathscr{G}$. A similar argument applies to $\widetilde{B}_{\mathscr{G},\mathscr{F}}^\bk$.
\end{proof}

As we see from the preceding proof,
$$\widetilde{\eta}_{\mathscr{F}} = \widetilde{A}_{\mathscr{F},(q^s)^* \mathscr{F}}^{-1}(1_{(q^s)^* \mathscr{F}}), \quad \widetilde{\psi}_{\mathscr{F}} = \widetilde{B}_{(q^s)^* \mathscr{F}[2],\mathscr{F}}^{-1}(1_{(q^s)^* \mathscr{F}[2]})$$
are the unit and counit for $\widetilde{\varepsilon}$ and $\widetilde{\varphi}$, respectively. We then wish to compare $(q^s)^* \widetilde{\eta} (q^s)_*$ with $(-) \star b_s[-1]$ and $(q^s)^* \widetilde{\psi} (q^s)_*$ with $(-) \star c_s$. 

\begin{prop} \label{second}
If the coefficient ring is $\bk$, $\mathbb{O}$, or $\mathbb{K}$, then there are identifications of natural transformations,
$$(q^s)^* \widetilde{\eta} (q^s)_* = (-) \star b_s[-1], \quad (q^s)^* \widetilde{\psi} (q^s)_* = (-) \star c_s.$$
\end{prop}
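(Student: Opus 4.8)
The plan is to follow the template of the proof of Proposition \ref{first}: transfer both identities across the faithful total-cohomology functor $\mathbb{H}$ and check them as elementary statements about graded $(R,R)$-bimodule maps.

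First I would reduce to the coefficient ring $\mathbb{K}$. Both sides of each identity are, evaluated on any object $\mathscr{F}$ (and by additivity it suffices to take $\mathscr{F}=\mathscr{E}(\underline w)$), morphisms built out of $\widetilde{\varepsilon}$, $\widetilde{\varphi}$, the functors $(q^s)_*$, $(q^s)^*$, and the morphisms $b_s$, $c_s$; all of these are compatible with extension of scalars by \textsection\ref{s:ext} and Proposition \ref{first}. Since the relevant $\Hom$-spaces over $\mathbb{O}$ are torsion free and inject into those over $\mathbb{K}$, and $\bk(\mathscr{E}_{\mathbb{O}}(\underline w))$ is again an indecomposable parity object by \eqref{bc}, it is enough to treat $\L=\mathbb{K}$; the cases $\L=\mathbb{O}$ and $\L=\bk$ then follow by the argument already used at the end of the proof of Proposition \ref{first}.

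Over $\mathbb{K}$, the functor $\mathbb{H}$ is faithful (Proposition \ref{faithful}), so it suffices to verify the identities after applying it. By Proposition \ref{faithful}, $\mathbb{H}$ carries $(q^s)_*$ to restriction $(-)\otimes_R R_{R^s}$ and $(q^s)^*$ to induction $(-)\otimes_{R^s}R$; hence $\mathbb{H}$ carries $(q^s)^*(q^s)_*$ to $N\mapsto N\otimes_{R^s}R$ and $((q^s)^*(q^s)_*)^2$ to $N\mapsto N\otimes_{R^s}R\otimes_{R^s}R$. Now $(\widetilde{\varepsilon},\widetilde{\eta})$ is a counit--unit pair for $(q^s)^*\dashv(q^s)_*$, and the proof of Proposition \ref{first} identifies $\mathbb{H}(\widetilde{\varepsilon}_{\mathscr{G}})$ with the usual counit $N\otimes_{R^s}R\to N$, $n\otimes r\mapsto nr$ (this is the content of $\mathbb{H}(u_s)=m_s$). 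Applying $\mathbb{H}$ to the triangle identity $\widetilde{\varepsilon}_{(q^s)^*\mathscr{G}}\circ(q^s)^*(\widetilde{\eta}_{\mathscr{G}})=\id$ and expanding via Proposition \ref{faithful}, a one-line computation forces $\mathbb{H}(\widetilde{\eta}_{\mathscr{G}})$ to be the map $n\mapsto n\otimes 1$; whiskering then gives that $\mathbb{H}\big((q^s)^*\widetilde{\eta}(q^s)_*\big)$ at $\mathscr{F}$ is $n\otimes r\mapsto n\otimes 1\otimes r$. On the other hand $\mathbb{H}\big((-)\star b_s[-1]\big)=\mathbb{H}(-)\otimes_R\mathbb{H}(b_s[-1])$, and $\mathbb{H}(b_s)$, being the image of the diagrammatic generator, is a degree-homogeneous $(R,R)$-bimodule map $B_s\to B_s\otimes_R B_s$; the one-colour relation of \cite[Definition 5.2]{ew} expressing a trivalent vertex capped with a dot as the identity, combined with $\mathbb{H}(u_s)=m_s$ and a degree count, forces $\mathbb{H}(b_s)$ to send $f\otimes g\mapsto f\otimes 1\otimes g$ (this is part of the computations of \cite[\textsection 10.5]{rw}). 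Tensoring on the left over $R$ with $\mathbb{H}(\mathscr{F})$ and identifying $\mathbb{H}(\mathscr{F})\otimes_R B_s$ with $\mathbb{H}(\mathscr{F})\otimes_{R^s}R$ gives exactly the map $n\otimes r\mapsto n\otimes 1\otimes r$. Thus the two natural transformations agree, which is the first identity.

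For the second identity I would argue in the same way, now with $(\widetilde{\psi},\widetilde{\varphi})$ the counit--unit pair for $(q^s)_*\dashv(q^s)^*[2]$, using $\mathbb{H}(\widetilde{\varphi}_{\mathscr{G}})=\mathbb{H}(\mathscr{G})\otimes_R\delta_s$ from Proposition \ref{first}, the corresponding triangle identity to pin down $\mathbb{H}(\widetilde{\psi})$, and the bimodule realization of the merge morphism $\mathbb{H}(c_s)$, which is fixed by the one-colour relations together with $\mathbb{H}(\ell_s)=\delta_s$; formally, this identity can also be read off from the first via the compatibility of adjoints. I expect the main work to be the bookkeeping: tracking the homological shifts $[1]$, $[2]$ against the internal grading shifts, the scalar $\tfrac12$ in $\delta_s$, and — most delicately — the transport of the adjunction data through the coherent natural isomorphisms of Proposition \ref{faithful}, so that $\mathbb{H}$ genuinely turns the triangle identities for $(q^s)^*\dashv(q^s)_*$ into those for induction $\dashv$ restriction; this is where one must be careful, since a comonad comultiplication is not determined by its counit alone and it is the explicit bimodule picture that does the work.
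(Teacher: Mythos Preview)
Your proposal is correct and follows essentially the same approach as the paper: reduce to $\mathbb{K}$ via extension of scalars, use the faithful total-cohomology functor $\mathbb{H}$ to transport both sides to graded bimodules, pin down $\mathbb{H}^s(\widetilde{\eta})$ and $\mathbb{H}^s(\widetilde{\psi})$ as the explicit maps $m\mapsto m\otimes 1$ and $m\otimes r\mapsto m\,\partial_s(r)$ by the uniqueness of the unit (resp.\ counit) corresponding to the already-identified counit $m_s$ (resp.\ unit $\delta_s$), and compare with the known bimodule formulas for $\mathbb{H}(b_s)$ and $\mathbb{H}(c_s)$ from \cite[\S 10.5]{rw}. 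The paper packages the uniqueness step slightly more explicitly by writing down the bimodule adjunction isomorphisms $A_{M,N}$, $B_{N,M}$ and checking that both $\mathbb{H}^s(\widetilde{\eta})$ and the candidate $\zeta$ are sent to the identity, but this is exactly the mechanism you describe; your caveat that ``a comonad comultiplication is not determined by its counit alone'' is precisely the point that makes the bimodule-side adjunction isomorphism indispensable.
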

\begin{proof}
We first argue over $\K$. Notice that we have natural transformations
$$\epsilon = (-) \otimes_R m_s: (-) \otimes_R B_s\langle -1 \rangle \to \id, \quad \phi = (-) \otimes_R \delta_s: \id \to (-) \otimes_R B_s \langle 1 \rangle,$$
which are such that $\epsilon \mathbb{H} = \mathbb{H} \widetilde{\varepsilon}$ and $\phi \mathbb{H} = \mathbb{H} \widetilde{\varphi}$, as well as maps of Hom spaces,
\begin{align*}
    A_{M,N}: & \Hom(M,N \otimes_R R_{R^s}) \to \Hom(M \otimes_{R^s} R,N), \quad f \mapsto \epsilon_N \circ (f \otimes_{R^s} R) \\
    B_{N,M}: & \Hom(N \otimes_R R_{R^s},M) \to \Hom(N, M \otimes_{R^s} R \langle 2 \rangle), \quad f \mapsto (f \otimes_{R^s} R\langle 2 \rangle) \circ \phi_N,
\end{align*}
which are adjunction isomorphisms for all graded $(R,R^s)$-bimodules $M$ and graded $(R,R)$-bimodules $N$. Now
\begin{align*}
A_{\mathbb{H}^s(\mathscr{F}),\mathbb{H}((q^s)^*\mathscr{F})}(\mathbb{H}^s(\widetilde{\eta}_\mathscr{F})) &= \epsilon_{\mathbb{H}((q^s)^*\mathscr{F})} \circ (\mathbb{H}^s(\widetilde{\eta}_\mathscr{F}) \otimes_{R^s} R) \\
&= \mathbb{H} ((\widetilde{\varepsilon}_{(q^s)^*\mathscr{F}}) \circ (q^s)^* \widetilde{\eta}_{\mathscr{F}}) = 1,
\end{align*}
and similarly $B_{\mathbb{H}((q^s)^*[2] \mathscr{F}),\mathbb{H}^s(\mathscr{F})}(\mathbb{H}^s(\widetilde{\psi}_{\mathscr{F}})) = 1$.
On the other hand, there are natural transformations of graded $(R,R^s)$-bimodule endofunctors, 
\begin{align*}
\zeta_M: M \to M \otimes_{R^s} R_{R^s}, & \quad m \mapsto m \otimes 1 \\
\omega_M: M \otimes_{R^s} R \langle 2 \rangle_{R^s} \to M, & \quad m \otimes r \mapsto m \partial_s(r),
\end{align*}
where $\partial_s: R \langle 2 \rangle \to R^s$ is the Demazure operator associated to $s$ \cite[\textsection 3.3]{ew}. By direct calculation, these satisfy 
\begin{align*}
A_{M, M \otimes_{R^s} R}(\zeta_M) &= \epsilon_{M \otimes_{R^s} R_{R}} \circ (\zeta_M \otimes_{R^s} R) = 1, \\
B_{M \otimes_{R^s} R,M}(\omega_M) &= (\omega_M \otimes_{R^s} R \langle 2 \rangle) \circ \phi_{M \otimes_{R^s} R} = 1.
\end{align*}
Since $A$ and $B$ are isomorphisms, the preceding calculations force
$\mathbb{H}^s(\widetilde{\eta}_{\mathscr{F}}) = \zeta_{\mathbb{H}^s(\mathscr{F})}$ and $\mathbb{H}^s(\widetilde{\psi}_{\mathscr{F}}) = \omega_{\mathbb{H}^s(\mathscr{F})}.$
Hence
\begin{align*}
    \mathbb{H}((q^s)^* \widetilde{\eta}_{(q^s)_* \mathscr{G}}) = \mathbb{H}^s(\widetilde{\eta}_{(q^s)_* \mathscr{G}}) \otimes_{R^s} R_R &= \zeta_{\mathbb{H}^s((q^s)_* \mathscr{G})} \otimes_{R^s} R_R \\ 
    &= \zeta_{\mathbb{H}(\mathscr{G}) \otimes_R R_{R^s}} \otimes_{R^s} R_R \\
    &= \mathbb{H}(\mathscr{G}) \otimes_R \mathbb{H}(b_s[-1]) = \mathbb{H}(\mathscr{G} \star b_s[-1]),
\end{align*}
using the calculation \cite[\textsection 10.5]{rw} that $\mathbb{H}(b_1): B_s \to B_{ss}\langle -1 \rangle$ is given by the formula 
$f \otimes g \mapsto f \otimes 1 \otimes g$; meanwhile,
\begin{align*}
\mathbb{H}((q^s)^* \widetilde{\psi}_{(q^s)_* \mathscr{G}}) = \mathbb{H}^s(\widetilde{\psi}_{(q^s)_* \mathscr{G}}) \otimes_{R^s} R_R 
&= \omega_{\mathbb{H}^s((q^s)_* \mathscr{G})} \otimes_{R^s} R_R \\
&= \omega_{\mathbb{H}(\mathscr{G}) \otimes_R R_{R^s}} \otimes_{R^s} R_R \\
&= \mathbb{H}(\mathscr{G}) \otimes_R \mathbb{H}(c_s) = \mathbb{H}(\mathscr{G} \star c_s),
\end{align*}
using the calculation that $\mathbb{H}(c_s): B_{ss} \to B_s \langle -1 \rangle$ is given by $f \otimes g \otimes h \mapsto f(\partial_s g) \otimes h$. Faithfulness of $\mathbb{H}$ then yields the result over $\K$.

Now, we have maps natural in $\mathscr{G} \in \Parity_{I \rtimes \Gm}(\Fl,\O)$,
\begin{align*}
\Hom(\mathscr{G} \star \mathscr{E}(s)[-1], \mathscr{G} \star \mathscr{E}(ss)[-2]) &\hookrightarrow \K \otimes_\O \Hom(\mathscr{G} \star \mathscr{E}(s)[-1], \mathscr{G} \star \mathscr{E}(ss)[-2]) \\
& \overset{\cong}{\to} \Hom(\K(\mathscr{G}) \star \mathscr{E}(s)[-1], \K(\mathscr{G}) \star \mathscr{E}(ss)[-2]).
\end{align*}
The first map is injective because its domain is torsion free. Due to the compatibilities enjoyed by extension of scalars (see again \textsection 3.8), the composite of the maps sends the natural transformation $(q^s)^* \widetilde{\eta} (q^s)_*$ over $\O$ to its counterpart over $\K$, and likewise for $(-) \star b_s[-1]$. Since these natural transformations agree over $\K$, they therefore also agree over $\O$. A further change of base to $\bk$ now proves the claim over that ring. Similar reasoning applies to $(q^s)^* \widetilde{\psi} (q^s)_*$ and $(-) \star c_s$.
\end{proof}

After successive passage to the quotient $\Parity_{\IW,\Gm}(\Fl,\bk)$, to the de-grading $\Smith_{\IW}$, and to the equivalent right $\mathscr{H}$-module category $\Tilt_0(\textbf{G})$, the relations given in Propositions \ref{first} and \ref{second} for the pairs $$(\widetilde{\varepsilon_s},\widetilde{\eta_s}): (q^s)^* \dashv (q^s)_*, \quad (\widetilde{\psi_s},\widetilde{\varphi_s}): (q^s)_* \dashv (q^s)^*[2]$$ manifest precisely as \eqref{realfirst} and \eqref{realsecond} for the pairs given by their correspondents under the various functors, $$(\varepsilon_s,\eta_s): T_s \dashv T^s, \quad (\psi_s,\varphi_s): T^s \dashv T_s.$$ This allows us to state a more complete version of Theorem \ref{main}.

\begin{thm} \label{final}
There is a monoidal right action of $\mathscr{H}$ on $\Rep_0(\textbf{G})$ such that, for all $s \in S$ and for explicit counit--unit pairs $(\varepsilon_s,\eta_s): T_s \dashv T^s$ and $(\psi_s,\varphi_s): T^s \dashv T_s$, the following properties hold:
\begin{enumerate}
    \item $B_s \langle n \rangle$ acts by the wall-crossing functor $\theta_s$ for all $n \in \mathbb{Z}$;
    \item the upper and lower dots associated to $s$ act by $\varepsilon_s$ and $\varphi_s$, respectively;
    \item the trivalent vertices associated to $s$ act by $T_s \eta_s T^s$ and $T^s \psi_s T^s$. 
\end{enumerate}
\end{thm}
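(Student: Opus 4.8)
The plan is to run the chain of functors used to build the action $a$ in Theorem~\ref{main} on the morphism-level data supplied by Propositions~\ref{first} and~\ref{second}. Recall that $a$ is obtained by transporting the right convolution action of $\mathscr{H}\cong\Parity_{I\rtimes\Gm}(\Fl,\bk)$ in turn along the averaging functor $\mathsf{Av}$, the parity Smith quotient $\mathsf{Q}=\mathsf{Q}_{\Fl}$ (through the equivalence $\widetilde{\mathsf{Q}}$ of~\eqref{tri}), the de-grading that identifies the copies $\Smith_\IW^\lambda=\Smith_\IW$ inside $\Smith_\IW^\parity(\Fl,\bk)$, the composite equivalence $\Smith_\IW^\lambda\cong\Tilt_0(\textbf{G})$ coming from Iwahori--Whittaker geometric Satake and the equivalence $i^\natural$, and finally the extension to $\Rep_0(\textbf{G})$ of \cite[Rmk.~5.2(1)]{rw}. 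Since Theorem~\ref{main} already establishes that $a$ is a well-defined monoidal functor with $a(B_s\langle n\rangle)=\theta_s$, all that is left is to evaluate $a$ on the generating dots and trivalent vertices attached to each $s\in S$.

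First I would record that, under $\Delta$ of Theorem~\ref{firsteq}, the upper and lower dots and the two trivalent vertices at $s$ are carried to the morphisms $u_s,\ell_s,b_s,c_s$ in $\Parity_{I\rtimes\Gm}(\Fl,\bk)$; hence, after convolution, the action sends them---up to the standard grading shifts---to the natural transformations $\widetilde\varepsilon=(-)\star u_s[-1]$, $\widetilde\varphi=(-)\star\ell_s$, $(q^s)^*\widetilde\eta(q^s)_*=(-)\star b_s[-1]$, and $(q^s)^*\widetilde\psi(q^s)_*=(-)\star c_s$. By Proposition~\ref{first}, $(\widetilde\varepsilon,\widetilde\eta)$ is a counit--unit pair for $(q^s)^*\dashv(q^s)_*$ and $(\widetilde\psi,\widetilde\varphi)$ one for $(q^s)_*\dashv(q^s)^*[2]$. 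The next step is to push both adjunctions, together with these explicit (co)units, through $\mathsf{Av}$ (which intertwines $(q^s)_*$ and $(q^s)^*$ with their Iwahori--Whittaker analogues, cf.~\eqref{avcom} and the proof of \cite[Cor.~11.5]{rw}) and then through $\mathsf{Q}$ (via Proposition~\ref{summary}(1), which supplies the Smith functors $(q^s)_*^\Smith$, $(q^s)_\Smith^*$, their adjunctions, and the compatibility squares of Proposition~\ref{summary}(4)). Upon de-grading, the shift $[2]$ in the second adjunction should collapse through the canonical isomorphism $e_{\Fl}\colon\id\cong[2]$ of Proposition~\ref{summary}(5)---compatibly with $(q^s)_\Smith^*$ by Lemma~\ref{comm}, just as in Proposition~\ref{bsact} for objects---so that $(q^s)_\Smith^*$ and $(q^s)_*^\Smith$ become a biadjoint pair on $\Smith_\IW$ with explicit counit--unit pairs $(\widetilde\varepsilon,\widetilde\eta)\colon(q^s)_\Smith^*\dashv(q^s)_*^\Smith$ and $(\widetilde\psi,\widetilde\varphi)\colon(q^s)_*^\Smith\dashv(q^s)_\Smith^*$.

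To finish, I would transport along $\Smith_\IW^\lambda\cong\Tilt_0(\textbf{G})$ and the extension to $\Rep_0(\textbf{G})$, using the computation above that this equivalence sends $(q^s)_*^\Smith$ to $T^s$ and $(q^s)_\Smith^*$ to $T_s$; the two biadjunctions then become counit--unit pairs $(\varepsilon_s,\eta_s)\colon T_s\dashv T^s$ and $(\psi_s,\varphi_s)\colon T^s\dashv T_s$ on $\Rep_0(\textbf{G})$ (a natural transformation between composites of the exact functors $T^s,T_s$ defined on the tilting subcategories extends uniquely to the blocks, as in \cite[Rmk.~5.2(1)]{rw}). Tracing the four morphisms through, the upper dot acts by $\varepsilon_s\colon\theta_s\to\id$, the lower dot by $\varphi_s\colon\id\to\theta_s$, and the trivalent vertices by the images of $(q^s)^*\widetilde\eta(q^s)_*$ and $(q^s)^*\widetilde\psi(q^s)_*$, namely $T_s\eta_sT^s\colon\theta_s\to\theta_s\theta_s$ and $T_s\psi_sT^s\colon\theta_s\theta_s\to\theta_s$, which are exactly~\eqref{realfirst} and~\eqref{realsecond}; together with $B_s\langle n\rangle\mapsto\theta_s$ this yields (1)--(3).

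The part I expect to be most delicate is the coherent descent of $\widetilde\varepsilon,\widetilde\varphi,\widetilde\eta,\widetilde\psi$ through the quotient and de-grading functors: since the counit and unit equations are equalities of natural transformations, one must check that they survive $\mathsf{Av}$, $\mathsf{Q}$, and the de-grading, which should follow from faithfulness of the $\mathsf{Q}$-type functors on the parity subcategories together with the Hom-space identifications~\eqref{quothom} and Proposition~\ref{nice}(1)--(2), and that the grading shifts are absorbed compatibly via the isomorphisms $\id\cong[2n]$ exactly as in Proposition~\ref{bsact}. Everything else should be formal once Propositions~\ref{first} and~\ref{second} are available.
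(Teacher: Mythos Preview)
Your proposal is correct and follows essentially the same route as the paper: the paper's argument for Theorem~\ref{final} is precisely the paragraph preceding the statement, which says that after successive passage to $\Parity_{\IW,\Gm}(\Fl,\bk)$, to the de-grading $\Smith_{\IW}$, and to $\Tilt_0(\textbf{G})$, the adjunction data $(\widetilde\varepsilon,\widetilde\eta)$ and $(\widetilde\psi,\widetilde\varphi)$ from Propositions~\ref{first} and~\ref{second} manifest as~\eqref{realfirst} and~\eqref{realsecond}. You have spelled out this transport in more detail than the paper does---in particular flagging the role of $e_{\Fl}$ in absorbing the shift $[2]$ and the need to check that counit--unit equations survive the quotient functors---but the strategy is the same.
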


\bibliographystyle{alpha}
\bibliography{biblio.bib}

\end{document}